\newtheorem {theorem}{Theorem}
\newtheorem {lemma}[theorem]{Lemma}
\newtheorem {proposition}[theorem]{Proposition}
\newtheorem {conjecture}[theorem]{Conjecture}
\newtheorem {definition}[theorem]{Definition}
\newtheorem {question}[theorem]{Question}
\theoremstyle{remark}
\newtheorem {remark}[theorem]{Remark}
\newtheorem {example}[theorem]{Example}
\numberwithin{equation}{section}
\numberwithin{theorem}{section}
\theoremstyle{definition}
\newlist{pcases}{enumerate}{1}
\setlist[pcases]{
  label=\bf{Case~\arabic*:}\protect\thiscase.~,
  ref=\arabic*,
  align=left,
  labelsep=0pt,
  leftmargin=0pt,
  labelwidth=0pt,
  parsep=0pt
}
\newcommand{\case}[1][]{%
  \if\relax\detokenize{#1}\relax
    \def\thiscase{}%
  \else
    \def\thiscase{~#1}%
  \fi
  \item
}
\newcommand{\ZZ}{\mathbb{Z}}
\newcommand{\Z}{\mathbb{Z}}
\newcommand{\R}{\mathbb{R}}
\newcommand{\N}{\mathbb{N}}
\newcommand{\C}{\mathbb{C}}
\newcommand{\F}{\mathbb{F}}
\newcommand{\Q}{\mathbb{Q}}
\newcommand{\RP}{\mathbb{RP}}
\newcommand{\re}{\operatorname{Re}}
\newcommand{\ssm}{\smallsetminus}
\newcommand{\mirror}{\overline}
\newcommand{\pt}{\mathrm{pt}}
\newcommand{\legendre}[2]{ \genfrac{(}{)}{}{}{#1}{#2} }
\tikzset{every picture/.style=thick}
\tikzset{baseline=-\the\dimexpr\fontdimen22\textfont2\relax}
\title{Surgery obstructions and character varieties}
\date{}
\author{Steven Sivek}
\address{Department of Mathematics \\ Imperial College London}
\email{s.sivek@imperial.ac.uk}
\author{Raphael Zentner}
\address{Universit\"{a}t Regensburg}
\email{raphael.zentner@mathematik.uni-regensburg.de}
\begin{document}

\begin{abstract}

We provide infinitely many rational homology 3-spheres with weight-one fundamental groups which do not arise from Dehn surgery on knots in $S^3$.  In contrast with previously known examples, our proofs do not require any gauge theory or Floer homology.  Instead, we make use of the $SU(2)$ character variety of the fundamental group, which for these manifolds is particularly simple: they are all $SU(2)$-cyclic, meaning that every $SU(2)$ representation has cyclic image. Our analysis relies essentially on Gordon-Luecke's classification of half-integral toroidal surgeries on hyperbolic knots, and other classical 3-manifold topology. 
\end{abstract}

\maketitle

\section{Introduction}

Every closed, connected, oriented 3-manifold can be obtained by Dehn surgery on some link in $S^3$, but it is often hard to tell whether a given $Y$ arises by surgery on a knot.  The simplest obstruction is that $\pi_1(Y)$ must have weight one, meaning that it is normally generated by a single element, and in particular $H_1(Y;\Z)$ must be cyclic.

Many examples of irreducible $Y$ with $H_1(Y)$ cyclic are known to not be realizable by Dehn surgery on a knot, but the proofs require much more sophisticated techniques.  These include Casson--Walker invariants \cite{boyer-lines}, instanton gauge theory \cite{auckly}, and Heegaard Floer homology \cite{hom-karakurt-lidman,marengon-kanenobu}.  For many so-called L-spaces, the Heegaard Floer $d$-invariants have been particularly useful, especially in the form of Greene's combinatorial ``changemaker'' obstruction \cite{greene}.  These have led to a complete classification of the spherical manifolds $Y$ which are realizable as Dehn surgery on a nontrivial knot \cite{greene-lens-space,li-ni,gu,prism-1,prism-2,prism-3}.

In this paper, we provide infinitely many new examples of irreducible 3-manifolds $Y$ with weight-one fundamental group which do not arise as Dehn surgery on a knot.  Our methods are new, and most notably, they do not make use of any gauge theory or Floer homology.  Instead, these $Y$ have the following useful property.
\begin{definition} \label{def:su2-cyclic}
A rational homology 3-sphere $Y$ is \emph{$SU(2)$-cyclic} if every representation $\rho: \pi_1(Y) \to SU(2)$ has cyclic image, or equivalently if every $\rho$ has abelian image.
\end{definition}
\noindent This property is especially useful because we can rule out large classes of Dehn surgeries which are known to not produce $SU(2)$-cyclic manifolds, and then in many cases a simple linking form argument suffices to finish the proof.

To be more precise, the 3-manifolds in question are all graph manifolds, of the form
\[ Y(T_{a,b},T_{c,d}) := \big(S^3 \ssm N(T_{a,b})\big) \cup_{T^2} \big(S^3 \ssm N(T_{c,d})\big), \]
in which we glue a meridian of one torus knot exterior to a Seifert fiber of the other and vice versa. Here, $N(T_{a,b})$ denotes a tubular neighborhood of the torus knot $T_{a,b}$.  These manifolds have weight-one fundamental group (Lemma~\ref{lem:weight-one}) and are $SU(2)$-cyclic (Lemma~\ref{lem:splicing-cyclic}).  They arose naturally in our previous study of $SU(2)$-cyclic surgeries on knots \cite{sivek-zentner,sivek-zentner-menagerie}, because they are among the only known examples of $SU(2)$-cyclic 3-manifolds other than lens spaces with cyclic first homology, meaning that they might conceivably result from Dehn surgery on a knot in $S^3$.

Our results include the following special cases, though the methods apply much more generally.  In order to state them, we say that a set $T \subset \N$ has density zero if
\[ \lim_{n \to \infty} \frac{|T \cap \{1,2,\dots,n\}|}{n} = 0. \]

\begin{theorem}[Theorem~\ref{thm:ab-ab}] \label{thm:main-ab-ab}
There is a set $S \subset \N$ of density zero such that if $T_{a,b}$ is a nontrivial torus knot with $a,b>2$ and $ab \not\in S$, then $Y(T_{a,b},T_{-a,b})$ is not Dehn surgery on any knot in $S^3$.
\end{theorem}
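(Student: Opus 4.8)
The plan is to reduce the statement to a number-theoretic condition on $n := ab$, by first establishing two structural facts about $Y := Y(T_{a,b},T_{-a,b})$ — that it is $SU(2)$-cyclic and that it is irreducible — and then extracting an obstruction from its linking form. I would begin with $H_1(Y)$. Writing $X_1,X_2$ for the two torus knot exteriors, with meridians $m_i$ and regular fibers $f_i = \ell_i + (c_id_i)m_i$ (so $c_1d_1 = ab$ and $c_2d_2 = -ab$), the identifications $m_1 = f_2$ and $f_1 = m_2$ give a Mayer--Vietoris presentation matrix $\twosmallmatrix{1}{ab}{-ab}{1}$, whence
\[ H_1(Y;\Z) \cong \Z/(1+a^2b^2). \]
So $Y$ is a rational homology sphere with cyclic homology of order $1+n^2$, and $Y \cong S^3_{p/q}(K)$ would force $|p| = 1+n^2$. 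I would record the linking form here as well, since it drives the endgame: it is a rank-one form $\tfrac{s}{1+n^2}$ on $\Z/(1+n^2)$ with $s$ determined (up to squares) by the gluing data.

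The central structural input is that $Y$ is $SU(2)$-cyclic, which I would prove directly from $\pi_1(Y) = \pi_1(X_1) *_{\Z^2} \pi_1(X_2)$. Suppose $\rho\colon\pi_1(Y)\to SU(2)$ is irreducible. Each $\pi_1(X_i)\cong\langle x,y\mid x^a=y^b\rangle$ has center generated by the fiber class $f_i$, so by Schur's lemma $\rho(f_i)\in\{\pm1\}$ whenever $\rho|_{X_i}$ is non-abelian. The gluing $f_1=m_2$ then sends this central value onto the meridian $m_2$; since $m_2$ normally generates $\pi_1(X_2)$ and $\pm1$ is central, the whole image $\rho(\pi_1 X_2)$ collapses into $\{\pm1\}$. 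Feeding this back through $m_1=f_2$ forces $\rho(m_1)=\pm1$, and the same normal-generation argument collapses $\rho(\pi_1 X_1)$ into $\{\pm1\}$, contradicting irreducibility. The case where both restrictions are abelian is dispatched by observing that the shared image of $\pi_1(T^2)$ is either noncentral, forcing both factors into a single maximal torus so that $\rho$ is abelian, or central, forcing both $m_1$ and $m_2 = f_1$ (hence both factors) into $\{\pm1\}$. Thus $Y$ admits no irreducible representation, and as $H_1(Y)$ is finite every $\rho$ has finite abelian, hence cyclic, image.

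I would then check that $Y$ is irreducible and toroidal: both pieces are irreducible with incompressible boundary, and because the gluing carries a fiber to a meridian it cannot match the (unique) Seifert fibrations, so the splitting torus is essential. In particular $Y$ is not a lens space, so if $Y\cong S^3_{p/q}(K)$ then $K$ is nontrivial. At this point I would invoke the classification of $SU(2)$-cyclic surgeries on nontrivial knots to constrain the slope: such a surgery cannot be one already known to carry an irreducible $SU(2)$ representation, which eliminates all but a controlled family of slopes and pins $q$ down modulo the data the linking form sees. This is the step I would lean on hardest, because without a constraint on $q$ the linking form of $S^3_{p/q}(K)$ — which is that of $L(p,q)$ and depends only on $p,q$ — could always be matched by a suitable $q$.

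With the slope constrained, the obstruction becomes explicit: $Y\cong S^3_{p/q}(K)$ would force $\ell_Y\cong\ell_{L(p,q)}$ on $\Z/(1+n^2)$, i.e. a congruence $-q\equiv s\,u^2\pmod{1+n^2}$ for some unit $u$, which by reciprocity becomes a condition on Legendre symbols $\legendre{\cdot}{\cdot}$ attached to the prime factorization of $1+n^2$. I expect the main obstacle to be the final counting step: one must show that the set $S$ of $n=ab$ for which this residue condition is \emph{solvable} — so that the linking form fails to obstruct — has density zero. Since every prime divisor of $1+n^2$ is $\equiv 1\pmod 4$, the factorization of $1+n^2$ is already atypical, and I would aim to show that solvability forces $1+n^2$ into a sparse set (for instance with an anomalous number or size of prime factors) and then bound $|S\cap\{1,\dots,N\}| = o(N)$ by a sieve or divisor estimate. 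Confirming that the congruence is genuinely obstructive for a density-one set of $n$, rather than merely generically, is the delicate point.
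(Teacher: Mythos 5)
Your reduction to a residue condition on the linking form, and your sketch of the sieve showing that the exceptional set of $n=ab$ has density zero, are both in the spirit of the paper's argument (the paper shows that $x^2\equiv\pm n\pmod{n^2+1}$ forces every odd prime $p\mid n^2+1$ to satisfy $p\equiv 1\pmod 8$, and then sieves over primes $\equiv 5\pmod 8$). The $SU(2)$-cyclicity and homology computations are also fine. But there is a genuine gap at the step you yourself identify as the one you ``lean on hardest'': you propose to constrain the denominator $q$ of the surgery slope by invoking ``the classification of $SU(2)$-cyclic surgeries on nontrivial knots.'' No such classification exists; this is essentially an open problem, and the known general results (e.g.\ Kronheimer--Mrowka) only control slopes $p/q$ with $|p/q|\le 2$, which says nothing here since $|p|=n^2+1$ is large. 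Without an actual mechanism forcing $q=1$, the linking form obstruction evaporates, exactly as you observe, so the proof does not close.

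The paper's route to $q=1$ is quite different and is where most of the work lies. It first proves (Theorem~\ref{thm:iterated-cables}) a complete classification of $SU(2)$-cyclic surgeries on \emph{iterated torus knots}, all of which are atoroidal, ruling those knots out since $Y$ is toroidal. For hyperbolic $K$ with a non-integral surgery slope, it uses the toroidality of $Y$ together with the Gordon--Luecke classification of non-integral toroidal surgeries: $K$ must be an Eudave-Mu\~noz knot with its half-integral slope, and the explicit identification of those surgered manifolds (Theorem~\ref{thm:em-classification}) shows they have the form $\pm Y(T_{l,lm-1},T_{2,-(2m-1)})$, which the hypothesis $a,b>2$ excludes. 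For satellite $K$, the uniqueness of the incompressible torus in $Y$ (Lemma~\ref{lem:splicing-tori}) plus \cite{cgls} and Miyazaki--Motegi reduce to the iterated torus knot case. Note that your proposal never uses the hypothesis $a,b>2$, which is in fact necessary: $Y(T_{2,2m-1},T_{-2,2m-1})$ \emph{is} a half-integral surgery on an Eudave-Mu\~noz knot, so any argument that does not see this hypothesis cannot be correct as stated.
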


\begin{theorem}[Theorem~\ref{thm:ab+ab}] \label{thm:main-ab+ab}
There is a set $S' \subset \N$ of density zero such that if $T_{a,b}$ is a nontrivial torus knot with $|ab| \not\in S'$, then $Y(T_{a,b},T_{a,b})$ is not Dehn surgery on any knot in $S^3$.
\end{theorem}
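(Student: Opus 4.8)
The plan is to combine the $SU(2)$-cyclic hypothesis, which pins the surgery slope down to a short list, with a linking-form comparison whose remaining freedom is controlled by a quadratic residue condition modulo $(ab)^2-1$.

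First I would record the topology of $Y=Y(T_{a,b},T_{a,b})$. In the exterior $X=S^3\ssm N(T_{a,b})$ the regular Seifert fibre $h$ is the curve of slope $ab$, hence $h\sim ab\,\mu$ in $H_1(X)$, while the meridian $\mu$ is itself a section of the fibration. Feeding the gluing $\mu_1\leftrightarrow h_2$, $h_1\leftrightarrow\mu_2$ into a Mayer--Vietoris computation gives $H_1(Y)\cong\Z/((ab)^2-1)$. Since the boundary of a nontrivial knot exterior is incompressible the gluing torus is incompressible in $Y$, and because the gluing interchanges fibre and section it is not fibre-preserving; thus $Y$ is irreducible and toroidal, with JSJ decomposition the two copies of $X$. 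In particular $Y$ is not a lens space, so if $Y\cong S^3_{p/q}(K)$ then $K$ is a nontrivial knot and $|p|=(ab)^2-1$.

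Next I would use that $Y$ is $SU(2)$-cyclic. By Kronheimer--Mrowka a nontrivial knot admits an irreducible $SU(2)$ representation after $p/q$-surgery whenever $|p/q|\le 2$, so here $|p/q|>2$. I would then eliminate knot types: torus-knot surgeries are Seifert fibred over a base orbifold with no essential curves, hence atoroidal (or reducible at the cabling slope), so $K$ is not a torus knot; and if $K$ were a satellite then the exterior of a nontrivial companion would survive as a JSJ piece and force an irreducible $SU(2)$ representation, contradicting $SU(2)$-cyclicity. Hence $K$ is hyperbolic, and by the Gordon--Luecke analysis of toroidal surgeries the slope is integral or half-integral, i.e. $q\in\{1,2\}$ up to sign.

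With $q$ confined to $\{\pm1,\pm2\}$, the surgery forces the linking form of $Y$ on $\Z/((ab)^2-1)$ to be isomorphic to $\langle -q/p\rangle$. I would compute $\lambda_Y$ directly from the plumbing presentation of the graph manifold; I expect the answer to be, up to the usual square-class ambiguity, $\lambda_Y\cong\langle ab/((ab)^2-1)\rangle$. Comparing, the surgery can exist only if one of $ab,\ -ab,\ 2ab,\ -2ab$ is a quadratic residue modulo $(ab)^2-1$. Since $(ab)^2-1=(ab-1)(ab+1)$ with the two factors coprime up to a factor of $2$, and $ab\equiv 1$, $ab\equiv -1$ modulo $ab-1$, $ab+1$ respectively, the Chinese Remainder Theorem turns each such condition into a congruence constraint on the prime factors of $ab\pm1$ (for instance, $-ab$ being a residue amounts to $-1$ being a residue modulo $ab-1$). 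I would then \emph{define} $S'$ to be the set of $|ab|$ for which at least one of these residue conditions holds, so that $|ab|\notin S'$ makes the linking forms disagree and rules out the surgery.

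Finally, each residue condition forces $ab-1$ or $ab+1$ to have all odd prime factors in a fixed set of residue classes (e.g. $\equiv 1 \bmod 4$ for the $-1$ condition), and by standard Landau--Wirsing estimates the integers all of whose prime factors lie in a fixed class have density zero; a finite union of such constraints is still density zero, giving the claim. I expect the main obstacle to be the middle step -- cleanly ruling out satellite companions via $SU(2)$ representations and invoking the exceptional-surgery bound to force $q\in\{1,2\}$ -- because without pinning the slope to a short list the linking form imposes no constraint at all, every residue class modulo $p$ having a representative of size less than $p/2$. The linking-form computation and the density estimate should then be routine.
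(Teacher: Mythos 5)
Your overall architecture (pin down the slope, compare linking forms via a quadratic residue condition modulo $(ab)^2-1$, then show the exceptional set has density zero using primes in arithmetic progressions) matches the paper's, and your integral-slope and density-zero steps are essentially the paper's Proposition~\ref{prop:ab-residue} and the Landau-type estimate in the proof of Theorem~\ref{thm:ab+ab}. The genuine gap is exactly where you suspected it: the elimination of satellite knots at non-integral slopes. Your stated mechanism --- that the exterior of a nontrivial companion ``would survive as a JSJ piece and force an irreducible $SU(2)$ representation'' --- fails on both counts. First, a surviving incompressible torus does not force an irreducible $SU(2)$ representation: $Y(T_{a,b},T_{a,b})$ itself is toroidal and $SU(2)$-cyclic, so toroidality alone yields no representation-theoretic contradiction. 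Second, the companion torus need not survive the surgery at all; when it compresses, the correct conclusion (via \cite[Theorem~2.0.1]{cgls}, since the slope is non-integral) is that $K$ is a cable of the companion, and one must then descend through the satellite tower. Without closing this case you cannot bound the denominator $q$, and as you yourself observe, an unbounded $q$ makes the linking-form comparison vacuous.

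The paper's route through this is substantial and has no shortcut of the kind you propose. When the companion torus survives, Lemma~\ref{lem:splicing-tori} (uniqueness of the incompressible torus in $Y$) forces it to be the splicing torus, so the companion is a torus knot and the surgered pattern space is Seifert fibered; Miyazaki--Motegi then forces the pattern to be a cable of a $0$-bridge braid, so $K$ is an iterated torus knot. Ruling these out requires the full classification of $SU(2)$-cyclic surgeries on iterated torus knots (Theorem~\ref{thm:iterated-cables}, all of Section~\ref{sec:cables}, resting on the pillowcase estimates of \cite{sivek-zentner}), which shows all such surgeries are lens spaces or connected sums of lens spaces, hence not $Y$. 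When the companion torus compresses, the cable structure plus \cite[Corollary~7.3]{gordon} and an arithmetic argument on the slopes produce a contradiction. You would need to supply all of this (or a substitute) before your residue conditions apply. One further remark: in the hyperbolic half-integral case you only use that the slope has denominator $2$, whereas Gordon--Luecke actually identifies $K$ as an Eudave-Mu\~noz knot and hence $Y \cong \pm Y(T_{l,lm-1},T_{2,-(2m-1)})$, which is never $Y(T_{a,b},T_{a,b})$; the paper uses this to discard the half-integral case outright, with no need for your extra $\pm 2ab$ residue conditions. Your weaker version would still give a density-zero set, just a larger one, so that difference is cosmetic; the satellite case is the real defect.
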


The sets $S$ and $S'$ in Theorems~\ref{thm:main-ab-ab} and \ref{thm:main-ab+ab} have explicit descriptions:
\begin{align*}
S &= \{n \in \N \mid p\equiv1\!\!\!\!\pmod{8} \textrm{ for all odd primes } p \mid n^2+1\} \\
S' &= \{n \in \N \mid p\not\equiv 3\!\!\!\!\pmod{4} \textrm{ for all primes } p \mid n-1 \textrm{ \textit{or} for all } p \mid n+1\}.
\end{align*}
For concrete examples, one can show that $n \not\in S$ if $n$ is congruent to any of $2$, $3$, $5$, or $6 \pmod{8}$, and that $n \not\in S'$ if $n$ is congruent to $8$ or $10 \pmod{12}$.


Our proofs of Theorems~\ref{thm:main-ab-ab} and \ref{thm:main-ab+ab} rely mostly on classical 3-manifold topology; most of the work goes into ruling out non-integral slopes. We deal individually with the possibilities that our manifolds are surgeries on a torus knot, a satellite knot, or a hyperbolic knot. Most substantially, we use the classification of non-integral toroidal surgeries on hyperbolic knots by Gordon and Luecke, who proved that all such surgeries have at most half-integral slopes \cite{gordon-luecke-atmosthalf}, and that the list of examples constructed by Eudave-Mu\~noz \cite{eudave-munoz-toroidal} was complete \cite{gordon-luecke-nonintegral}. For satellite knots, we use work of Miyazaki--Motegi \cite{miyazaki-motegi-3} to show that in this case the satellite torus compresses in $Y(T_{a,b},T_{c,d})$, and then work of Culler--Gordon--Luecke--Shalen \cite{cgls} to deduce that the knot is an iterated torus knot.  At this point we use the $SU(2)$-character varieties of the $Y(T_{a,b},T_{c,d})$, and specifically the fact that they are as simple as possible, to eliminate this last possibility.  This argument was inspired by our study of $SU(2)$-cyclic manifolds in \cite{sivek-zentner-menagerie}.  Ultimately we conclude the following.

\begin{theorem}[Theorem~\ref{thm:y-nonintegral-surgery}] \label{thm:main-nonintegral}
A 3-manifold $Y = Y(T_{a,b},T_{c,d})$ can be constructed by a surgery of non-integral slope $r$ on a knot $K$ in $S^3$ if and only if
\[ Y \cong \pm Y(T_{l,lm-1}, T_{2,-(2m-1)}) \]
for some integers $l$ and $m$.  In this case $K$ is an Eudave-Mu\~noz knot and $r$ is the corresponding half-integral slope.
\end{theorem}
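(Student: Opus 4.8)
The statement is an ``if and only if,'' and the harder direction is to show that a non-integral surgery forces $Y$ into the stated normal form. I would first record the reverse implication: given $Y \cong \pm Y(T_{l,lm-1}, T_{2,-(2m-1)})$, one exhibits the corresponding Eudave-Mu\~noz knot $K$ and checks directly that its half-integral toroidal surgery produces this graph manifold (the $\pm$ absorbing the orientation), by matching the two pieces of Eudave-Mu\~noz's explicit decomposition to the two torus knot exteriors. The plan for the forward direction is a trichotomy on the knot type of $K$. Since each $Y(T_{a,b},T_{c,d})$ is an irreducible toroidal graph manifold---its gluing torus $T^2$ is essential because the Seifert fibrations of the two torus knot exteriors do not match across it, and the two pieces are themselves atoroidal, so $T^2$ is the unique JSJ torus---geometrization lets us assume that $K$ is a torus knot, a satellite knot, or hyperbolic, and I would treat these in turn.

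If $K$ is a torus knot, then by Moser's classification every Dehn surgery on $K$ yields a connected sum of lens spaces or a Seifert fibered space over $S^2$ with at most three exceptional fibers; in either case the result is reducible or atoroidal, so it cannot be one of our manifolds. If $K$ is hyperbolic, then it admits a non-integral toroidal surgery, so the Gordon--Luecke classification forces $K$ to be one of Eudave-Mu\~noz's knots and $r$ to be the associated half-integral slope. It then remains to identify the resulting manifold: using Eudave-Mu\~noz's explicit description of the toroidal manifold produced by this surgery, one computes its two JSJ pieces and recognizes them as the exteriors of $T_{l,lm-1}$ and $T_{2,-(2m-1)}$, which gives precisely $Y \cong \pm Y(T_{l,lm-1}, T_{2,-(2m-1)})$.

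The remaining---and most delicate---case is that $K$ is a satellite. Here I would first use the graph manifold structure of $Y$ to pin down the satellite data: the companion torus of $K$ persists as the (unique) essential torus of $Y = S^3_r(K)$, splitting $Y$ into the companion exterior and the surgered pattern piece. Since both JSJ pieces of $Y$ are Seifert fibered, the companion must itself be a torus knot and the surgered pattern must be Seifert fibered, which forces $K$ to be a nontrivial iterated torus knot. To eliminate this possibility I would invoke the $SU(2)$-character variety: each $Y(T_{a,b},T_{c,d})$ is $SU(2)$-cyclic, whereas a nontrivial surgery on an iterated torus knot is not. Concretely, the complement $S^3 \ssm N(K)$ of such a knot carries a positive-dimensional family of irreducible $SU(2)$ representations coming from its Seifert pieces, and since the surgery relation $\mu^p\lambda^q = 1$ imposes essentially one condition, one can arrange such a representation to descend to an irreducible $SU(2)$ representation of $\pi_1(Y)$, contradicting $SU(2)$-cyclicity.

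I expect the two substantive obstacles to be, first, the explicit identification in the hyperbolic case---matching Eudave-Mu\~noz's combinatorial description of the surgered manifold to the normal form $\pm Y(T_{l,lm-1}, T_{2,-(2m-1)})$ and solving for the parameters $l$ and $m$---and second, the $SU(2)$ argument ruling out iterated torus knots, where the work lies in verifying that an irreducible representation of the Seifert pieces can always be chosen to satisfy the non-integral surgery relation and hence survive to $\pi_1(Y)$. The torus-knot step and the geometrization reduction are standard, so the weight of the proof rests on these two points.
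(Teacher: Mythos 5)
There are two genuine gaps in your satellite case, which is where essentially all of the difficulty lives.

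First, you assert that the companion torus of $K$ ``persists as the (unique) essential torus of $Y = S^3_r(K)$.'' That is not automatic: an essential torus in the knot exterior can compress after Dehn filling. The paper splits the satellite case into two subcases. If the companion torus does remain incompressible, then your argument (companion is a torus knot by Moser, surgered pattern is Seifert fibered, and by Miyazaki--Motegi a non-integrally surgered pattern that is Seifert fibered must be a cable of a $0$-bridge braid, so $K$ is an iterated torus knot) goes through. But if it compresses, one must invoke \cite[Theorem~2.0.1]{cgls}: since the torus compresses under both the meridional filling and the $r$-filling and $\Delta(\mu,r)\geq 2$, the knot $K$ is a cable $C_{p,q}(L)$ of its companion, whence $Y \cong S^3_{r/q^2}(L)$ with denominator of $r/q^2$ at least $|n|q^2 \geq 8$. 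This large denominator rules out $L$ being hyperbolic (Gordon--Luecke only allow half-integral toroidal slopes), and ruling out $L$ being a further satellite requires an explicit arithmetic contradiction with the cabling slopes. None of this is addressed in your proposal, and it cannot be skipped.

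Second, your mechanism for eliminating iterated torus knots rests on the claim that ``a nontrivial surgery on an iterated torus knot is not $SU(2)$-cyclic,'' and that claim is false. Torus knots have infinitely many nontrivial $SU(2)$-cyclic surgeries (the lens space slopes $pq+\tfrac1m$ and, when $2 \in \{|p|,|q|\}$, the reducible slope $pq$), and Theorem~\ref{thm:iterated-cables} shows that the cables $C_{2pq\pm 1,2}(T_{p,q})$ also admit nontrivial $SU(2)$-cyclic surgeries. So your proposed construction of an irreducible representation satisfying the surgery relation must fail at precisely these slopes, and identifying where it fails is the content of the paper's Section~\ref{sec:cables}. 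The correct deduction is not that no $SU(2)$-cyclic surgery exists, but that every $SU(2)$-cyclic surgery on an iterated torus knot is a lens space or a connected sum of lens spaces, hence atoroidal, and therefore cannot be the toroidal manifold $Y$. Your hyperbolic and torus-knot cases, and the reduction via the uniqueness of the essential torus, match the paper.
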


\begin{remark}
Theorem~\ref{thm:main-nonintegral} implies that the condition $a,b > 2$ in Theorem~\ref{thm:main-ab-ab} is necessary, since $Y(T_{2,2m-1},T_{-2,2m-1})$ arises from half-integral surgery on one of the Eudave-Mu\~noz knots for all $m \geq 2$.
\end{remark}

In order to apply $SU(2)$ character varieties in the proof of Theorem~\ref{thm:main-nonintegral}, we develop a complete classification of $SU(2)$-cyclic surgeries on iterated torus knots.  This builds directly on our work in \cite{sivek-zentner-menagerie}, which yields the classification for surgeries on ordinary torus knots as an immediate corollary.

\begin{theorem}[Theorem~\ref{thm:iterated-cables}] \label{thm:main-iterated-cables}
Let $K$ be an iterated torus knot.  If some nontrivial $r$-surgery on $K$ is $SU(2)$-cyclic, then $K$, $r$, and $S^3_r(K)$ are among the following:
\begin{align*}
K &= T_{p,q}: & r&=pq+\tfrac{1}{m}\ (m \neq 0), & S^3_r(K) &= L(mpq+1,mq^2) \\
K &= T_{p,2\epsilon}: & r&=2\epsilon p, & S^3_r(K) &= L(p,2\epsilon) \# \RP^3 \\
K &= C_{2pq+\epsilon,2}(T_{p,q}): & r &= 4pq+\epsilon, & S^3_r(K) &= L(4pq+\epsilon,4q^2) \\
&& \mathrm{or\ } r &= 4pq+2\epsilon, & S^3_r(K) &= L(2pq+\epsilon,2q^2) \# \RP^3.
\end{align*}
Here $\epsilon$ can be either $+1$ or $-1$.  In particular, if $K$ is an $n$-fold iterated torus knot for some $n \geq 3$ then $K$ has no nontrivial $SU(2)$-cyclic surgeries.
\end{theorem}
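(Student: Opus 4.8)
The plan is to induct on the number of cabling operations used to build the iterated torus knot $K$, taking the classification for ordinary torus knots from \cite{sivek-zentner-menagerie} as the base case; this already supplies the first two families in the list. For the inductive step I write $K = C_{a,b}(K')$ as the $(a,b)$-cable (winding number $b \geq 2$) of an iterated torus knot $K'$ with one fewer cabling, and analyze $Y = S^3_r(K)$ using Gordon's description of surgery on cable knots. The companion torus $T' = \partial N(K')$ splits $Y$ into the surgered cable space $M$ and the companion exterior $E_{K'} = S^3 \ssm N(K')$, and exactly one of three things happens: (i) $r = ab$ is the cabling slope and $Y = L(b,a) \mathbin{\#} S^3_{a/b}(K')$ is reducible; (ii) $r = ab \pm 1$, the surgered cable space $M$ is a solid torus, and $Y = S^3_{r/b^2}(K')$ is itself a surgery on $K'$; or (iii) $M$ is a genuine Seifert fibered space over the disk with two exceptional fibers and $T'$ is incompressible in $Y$.

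In case (i) I use the elementary fact that a connected sum $A \mathbin{\#} B$ is $SU(2)$-cyclic only if one summand admits only central $SU(2)$ representations: choosing a noncentral abelian representation on one factor forces the image of the other to centralize it. Since a nontrivial knot has no $S^3$ surgery, both summands here are nontrivial, so the lens space factor $L(b,a)$ must be $\RP^3$, forcing $b = 2$, and $S^3_{a/2}(K')$ must itself be $SU(2)$-cyclic. In case (ii) the hypothesis passes directly to the companion: $S^3_{r/b^2}(K')$ must be $SU(2)$-cyclic. In both surviving cases the inductive hypothesis applies to $K'$, and I then solve the resulting Diophantine matching, where the induced companion slope ($a/2$ in case (i), $r/b^2$ in case (ii)) must coincide with one of the $SU(2)$-cyclic slopes allowed for $K'$. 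When $K'$ is a torus knot $T_{p,q}$ this pins down $b = 2$ and $a = 2pq + \epsilon$ and reproduces exactly the two cable families $C_{2pq+\epsilon,2}(T_{p,q})$ with the stated lens-space and $\RP^3$-summand surgeries. When $K'$ is itself a cable (so $K$ is at least a $3$-fold iterate), the inductive slopes are all \emph{integral}, whereas the induced companion slope has denominator $2$ in case (i) and denominator at least $b^2 \geq 4$ in case (ii)—since $\gcd(a,b) = 1$ forces $\gcd(ab\pm 1, b^2) = 1$—so no match is possible and no $SU(2)$-cyclic surgery exists, which yields the final assertion.

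The main obstacle is case (iii): I must show that a genuinely toroidal cable surgery, with the companion torus incompressible, is never $SU(2)$-cyclic. My approach is to build an irreducible, hence non-abelian, representation $\rho \colon \pi_1(Y) \to SU(2)$ by gluing across $T'$. On the Seifert piece $M$ I produce an irreducible representation $\sigma$ sending the regular fiber $h$ to a central element, typically $-1$, after which the two exceptional generators acquire enough freedom to be sent to noncommuting elements, so such $\sigma$ exist and come in a positive-dimensional family. On the companion side I use only abelian representations of $E_{K'}$, which send the longitude $\lambda'$ to $1$ and the meridian $\mu'$ to an arbitrary element of a maximal torus. The task is therefore to choose $\sigma$ irreducible with $\sigma(\lambda') = 1$ and $\sigma|_{T'}$ landing in that torus, after which van Kampen assembles $\sigma$ and a matching abelian representation into an irreducible $\rho$, so that $Y$ is not $SU(2)$-cyclic. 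Expressing $\lambda'$ in Seifert coordinates and checking that the freedom in $\sigma$ suffices to impose $\sigma(\lambda') = 1$ is the technical heart; the degenerate cases where both exceptional fibers have order two, so the base-orbifold representations are more rigid, require a separate check for which the representation-theoretic techniques of \cite{sivek-zentner-menagerie} are the natural tool.
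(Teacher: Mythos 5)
Your overall architecture—induct on cabling depth, apply Gordon's trichotomy for the companion torus (reducible slope $ab$, distance-one slopes where the filled cable space is a solid torus, and distance $\geq 2$ where the companion torus survives), and match the induced companion slope against the inductive list—is sound, and your Diophantine analysis in cases (i) and (ii), including the denominator argument that kills all $n$-fold iterates with $n\geq 3$, reproduces the paper's computations. But the paper's route is quite different where it matters: for $|m|\geq 2$ it applies \emph{both} reductions of \cite[Propositions~10.2 and 10.3]{sivek-zentner}—to the companion with slope $r/w^2$ \emph{and} to the pattern applied to the unknot (the torus knot $T_{m,n}$) with slope $r$—and the two resulting constraints on $r$ are jointly so rigid that the toroidal case $\Delta(r,mn)\geq 2$ never needs to be confronted representation-theoretically. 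Your proposal uses only the companion reduction, so all of that difficulty is pushed into your case (iii).

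That is where there is a genuine gap. For the cables $C_{\pm 1,n}(T_{p,q})$ (which are honest iterated torus knots and must be handled), your case (iii) construction provably produces nothing. The regular fiber $h$ of the filled cable space $M$ restricts to $\mu'^{m}\lambda'^{n}$ on the companion torus, and any irreducible $\sigma$ on the Seifert piece $M$ sends $h$ to $\pm 1$; imposing $\sigma(\lambda')=1$ then forces $\sigma(\mu')^{m}=\pm 1$, which for $|m|=1$ means $\sigma(\mu')=\pm1$, i.e.\ $\sigma$ is \emph{central} on the boundary torus. But a representation of $\pi_1(M)=\langle x_1,x_2,h\rangle$ that is central on the boundary sends $x_1x_2$ to $\pm1$, hence $x_2=\pm x_1^{-1}$, and the image is abelian—contradicting irreducibility of $\sigma$. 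So no gluing of an irreducible representation on $M$ with an abelian one on the companion exterior exists, and your acknowledged caveat (exceptional fibers of order two) does not cover this failure mode, which depends on $m$ rather than on the fiber orders. One cannot simply assert that these toroidal fillings are non-$SU(2)$-cyclic either: they are graph manifolds built from a torus knot exterior and a small Seifert piece, structurally of the same type as the $Y(T_{a,b},T_{c,d})$ and the Eudave-Mu\~noz surgeries of Section~\ref{sec:eudave-munoz}, many of which \emph{are} $SU(2)$-cyclic. The paper instead handles $m=\pm1$ by constructing an explicit arc of irreducible representations of the knot \emph{exterior} $S^3\ssm C$ (Lemma~\ref{lem:1-n-arc}), which is irreducible on the companion side and lives over a segment of slope $-n$ in the pillowcase, and then invoking the quantitative bound of \cite[Proposition~3.1]{sivek-zentner} to show every $SU(2)$-cyclic slope would have to be within $|pq|n+1$ of $n$, contradicting $|r|\geq(|pq|-1)n^2$. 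To repair your argument you would need either to adopt that pillowcase estimate, or to rework case (iii) with the irreducible representation placed on the companion exterior and an abelian representation on $M$, together with a genuine verification (in the spirit of Proposition~\ref{prop:lm0p-classification}) that the boundary holonomies can be matched; and even for $|m|\geq 2$ the existence of your $\sigma$ with $\sigma(\lambda')=1$ is a nontrivial claim you have deferred rather than established.
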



While Theorem~\ref{thm:main-iterated-cables} does not present any new examples of $SU(2)$-cyclic manifolds, we do discover some genuinely new ones in Section~\ref{sec:eudave-munoz}.  These are half-integral, toroidal surgeries on some of the Eudave-Mu\~noz knots from \cite{eudave-munoz-toroidal}.  While many of these have the form $Y(T_{a,b},T_{c,d})$ and hence were already known to be $SU(2)$-cyclic, Propositions~\ref{prop:lm0p-classification} and \ref{prop:su2-cyclic-but-not-splicing} describe many more which are provably not of this form.  One such family is given as explicit surgeries on positive braids in Example~\ref{ex:p-nonzero}.

Finally, we note that linking form arguments do not always suffice to prove that $Y(T_{a,b},T_{c,d})$ is not realizable as integer surgery on a knot.  In many cases, we can still prove the desired result using Greene's changemaker obstruction \cite{greene}.  For example:

\begin{theorem}[Theorem~\ref{thm:2-2a+1-2-2b+1}] \label{thm:main-2-odd-2-odd}
Suppose for some positive integers $a \leq b$ that $Y = Y(T_{2,2a+1},T_{2,2b+1})$ is Dehn surgery on a knot in $S^3$.  Then 
\[ (a,b) \in \{ (1,1),(1,2),(1,3),(2,3),(3,3) \}, \]
and the slope is $-n$ where $n = |H_1(Y;\Z)| = 4(2a+1)(2b+1)-1$.
\end{theorem}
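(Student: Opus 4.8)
The plan is to first eliminate non-integral slopes with Theorem~\ref{thm:main-nonintegral}, and then to handle the two integral slopes $\pm n$ by combining a linking form computation with Greene's changemaker obstruction \cite{greene}.

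For the non-integral case, Theorem~\ref{thm:main-nonintegral} says that if $Y = Y(T_{2,2a+1},T_{2,2b+1})$ arises from a non-integral surgery then $Y \cong \pm Y(T_{l,lm-1},T_{2,-(2m-1)})$ for some $l,m$. I would compare JSJ decompositions: the two pieces of $Y$ are the Seifert-fibered exteriors of $T_{2,2a+1}$ and $T_{2,2b+1}$, whose multiplicities $\{2,2a+1\}$ and $\{2,2b+1\}$ are recovered from the exceptional fibers, while the pieces on the right-hand side have multiplicities $\{|l|,|lm-1|\}$ and $\{2,|2m-1|\}$ (recalling that $T_{2,-(2m-1)}$ and $T_{2,2m-1}$ have the same exterior). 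Matching these unordered pairs forces $l = 2$ and $a = b = m-1$, so the only candidate is $\pm Y(T_{2,2a+1},T_{2,-(2a+1)})$. But a short computation gives $|H_1(Y(T_{2,2a+1},T_{2,-(2a+1)}))| = 4(2a+1)^2 + 1$, whereas $|H_1(Y)| = 4(2a+1)^2 - 1$; since $|H_1|$ is orientation-insensitive these manifolds are not homeomorphic, and no non-integral slope occurs.

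It follows that the surgery slope is an integer, and since $|H_1(Y;\Z)| = n = 4(2a+1)(2b+1)-1$ it must be $\pm n$. Next I would compute the linking form of $Y$ on $\Z/n$ and determine which orientation bounds a negative-definite plumbing; comparing the form against the forms $\langle \mp 1/n\rangle$ of $S^3_{\pm n}(K)$ should rule out the slope $+n$ and leave only $Y = S^3_{-n}(K)$, equivalently $-Y = S^3_n(\bar K)$ with $n > 0$. To exploit this I would exhibit the negative-definite plumbing $X$ bounded by $-Y$, assembled from the plumbing graphs of the two Seifert pieces of $T_{2,2a+1}$ and $T_{2,2b+1}$ joined by an edge corresponding to the splicing torus, and verify that $X$ is sharp (so that the $d$-invariant hypotheses of Greene's theorem are met); this I expect to check via the plumbing calculus of Ozsv\'ath--Szab\'o and N\'emethi for negative-definite trees with few bad vertices.

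Greene's theorem then produces a changemaker vector $\sigma$ with $\sigma\cdot\sigma = n$ whose orthogonal complement in the diagonal lattice $\Z^{t+1}$ is isometric to the plumbing lattice $\Lambda = \Lambda(X)$ of rank $t$. The heart of the argument, and the step I expect to be the main obstacle, is the resulting lattice-embedding problem: using the explicit Gram matrix of $\Lambda$ (whose weights are read off from the continued fractions of the two Seifert framings, both short because one multiplicity is always $2$) I would pin down the coordinates of the standard generators of $\Lambda$ inside $\Z^{t+1}$, impose the changemaker inequalities on $\sigma$, and show that no compatible embedding exists unless $(a,b) \in \{(1,1),(1,2),(1,3),(2,3),(3,3)\}$. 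Ruling out every other $(a,b)$ amounts to a delicate finite-plus-asymptotic case analysis, and it is exactly here that the linking form is too weak to obstruct the surgery while the changemaker condition is not.
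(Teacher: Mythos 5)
Your outline matches the paper's at the top level (non-integral slopes via Theorem~\ref{thm:main-nonintegral}, slope $+n$ via the linking form, slope $-n$ via changemakers), but the middle step hides a genuine gap. Ruling out $+n$-surgery is \emph{not} a routine comparison of linking forms: since $S^3_n(K)$ has linking form $\ell(x\mu_K,x\mu_K)=-x^2/n$ for an arbitrary unit $x$, while $Y$ has $\ell(\mu_{2,2a+1},\mu_{2,2a+1})=-2(2b+1)/n$ by Proposition~\ref{prop:linking-form}, what you must show is that $2(2b+1)$ is never a quadratic residue modulo $n=4(2a+1)(2b+1)-1$ as $(a,b)$ ranges over all pairs. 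This is exactly Proposition~\ref{prop:2m-square}, which occupies an entire subsection of the paper and is proved with quadratic reciprocity and a Dirichlet character $\chi_{8m}$ on $(\Z/8m\Z)^\times$ evaluated at the residue class $4m-1$. Your alternative suggestion --- deciding which orientation of $Y$ bounds a negative-definite filling --- cannot substitute for this: $Y$ is the branched double cover of an alternating link, so \emph{both} orientations bound negative-definite sharp $4$-manifolds (the Goeritz forms of the white and black graphs), and definiteness alone distinguishes nothing. Without the quadratic-residue theorem the slope $+n$ is not excluded.

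On the other two steps: your treatment of the non-integral case is essentially the unpacking the paper leaves implicit (match JSJ pieces, then compare $|H_1|=4(2a+1)^2+1$ against $4(2a+1)(2b+1)-1$), and it is sound, though the claim that matching ``forces $l=2$'' skips the case $\{|l|,|lm-1|\}=\{2a+1,2\}$ with $|lm-1|=2$, which also needs the $H_1$ count to dispose of. For slope $-n$, the paper works with the Goeritz lattice of the black graph of an explicit alternating diagram of $L(T_{2,2a+1},T_{2,2b+1})$, where sharpness is automatic by Ozsv\'ath--Szab\'o, rather than building and certifying a sharp plumbing as you propose; more importantly, the mechanism that reduces infinitely many $(a,b)$ to a finite list is missing from your sketch. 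A changemaker satisfies $\sigma_i\le 2^i$, hence $|\langle\sigma,\sigma\rangle|\le\frac13(4^{t+1}-1)$; since the black graph has exactly six vertices for every $(a,b)$ (this is where $q=s=2$ enters), this forces $n\le 1365$, i.e.\ $(2a+1)(2b+1)\le 341$, and the survivors are then eliminated by a finite (in the paper, computer-assisted) search. Your ``finite-plus-asymptotic case analysis'' gestures at this but supplies no bound that would actually terminate it.
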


We do not know whether the remaining five cases can actually be realized as surgeries.  In any case, Theorem~\ref{thm:main-2-odd-2-odd} leads us to expect that in fact, none of these graph manifolds can be constructed by integer surgeries on knots in $S^3$.  Combining this expectation with Theorem~\ref{thm:main-nonintegral} in the non-integral case, we have the following.

\begin{conjecture} \label{conj:not-surgery}
If $T_{a,b}$ and $T_{c,d}$ are nontrivial torus knots, then $Y(T_{a,b},T_{c,d})$ is the result of $r$-surgery on a knot $K$ in $S^3$ if and only if $K$ is an Eudave-Mu\~noz knot and $r$ is its half-integral toroidal slope.  In particular we must have
\[ Y \cong \pm Y(T_{l,lm-1},T_{2,-(2m-1)}) \]
for some integers $l$ and $m$.
\end{conjecture}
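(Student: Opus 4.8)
The statement is a conjecture, so I describe the route I would take rather than give a complete argument. The non-integral slope case is settled in both directions by Theorem~\ref{thm:main-nonintegral}, together with the Eudave--Mu\~noz construction and its identification in Theorem~\ref{thm:em-classification}: a non-integral surgery produces some $Y(T_{a,b},T_{c,d})$ if and only if $K$ is an Eudave--Mu\~noz knot, $r$ is its half-integral toroidal slope, and $Y \cong \pm Y(T_{l,lm-1},T_{2,-(2m-1)})$. Hence the entire remaining content of the conjecture is the \emph{integer} case: one must show that no $Y = Y(T_{a,b},T_{c,d})$ with $T_{a,b}, T_{c,d}$ nontrivial is $n$-surgery on a knot $K \subset S^3$ for any $n \in \Z$.

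So the plan is to assume $Y = S^3_n(K)$ with $n \in \Z$ and derive a contradiction, and the first task is to constrain the knot type. The splicing torus is incompressible, since neither torus knot exterior is a solid torus and a compressing disk would have to lie entirely on one side; thus $Y$ is toroidal. Because $Y$ is $SU(2)$-cyclic, Theorem~\ref{thm:main-iterated-cables} forbids $K$ from being a torus knot or any iterated torus knot: the only $SU(2)$-cyclic surgeries on such knots are lens spaces and connected sums $L(\cdot,\cdot)\#\RP^3$, none of which contains an incompressible torus. For a more general satellite $K$ one compares the JSJ decomposition of $S^3_n(K)$ — whose companion tori survive the surgery — with that of $Y$, which consists of exactly two Seifert pieces glued along a single torus; this should force any companion to be trivial. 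Hence $K$ must be hyperbolic, and the incompressible torus in $Y$ is created by the surgery.

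It then remains to rule out integer toroidal surgeries on hyperbolic knots, where I would use homological obstructions. Integer surgery forces $H_1(Y) \cong \Z/n$ to carry the linking form $\langle \pm 1/n \rangle$; computing the linking form of $Y(T_{a,b},T_{c,d})$ from its splicing description and comparing shows that this fails for all but a density-zero set of parameters, which is the mechanism behind Theorems~\ref{thm:main-ab-ab} and \ref{thm:main-ab+ab}. For the residual, linking-form-compatible parameters I would verify, via the graph-manifold case of the L-space conjecture and the fact that the relevant meridian-to-fiber gluings are not left-orderable, that $Y$ is an L-space, and then apply Greene's changemaker obstruction to the associated lattice, exactly as in the proof of Theorem~\ref{thm:main-2-odd-2-odd}, to exclude the remaining candidate slopes.

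The main obstacle is precisely this integer toroidal case on hyperbolic knots, and it is why the statement is only a conjecture. Unlike the non-integral situation, where Gordon--Luecke reduce all toroidal surgeries to the finite half-integral Eudave--Mu\~noz list, integer toroidal Dehn fillings are abundant and admit no analogous finiteness theorem, so one cannot simply enumerate the candidate knots $K$. Worse, the changemaker obstruction is not by itself decisive: already within the subfamily $Y(T_{2,2a+1},T_{2,2b+1})$ it leaves the five cases of Theorem~\ref{thm:main-2-odd-2-odd} unresolved. Settling the conjecture would therefore require either a substantially sharper $d$-invariant and changemaker analysis capable of closing those borderline cases uniformly, or — in keeping with the spirit of this paper — a new obstruction to integer surgery that does not rely on the finiteness of the toroidal filling classification.
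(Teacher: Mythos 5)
This statement is a \emph{conjecture} in the paper: the authors give no proof, and your proposal honestly presents itself as a route rather than an argument. You correctly isolate the two halves: the non-integral case is fully settled by Theorem~\ref{thm:y-nonintegral-surgery} together with the identification in Theorem~\ref{thm:em-classification}, and the entire open content is the integral case. Your exclusion of torus and iterated torus knots via Theorem~\ref{thm:iterated-cables} (the $SU(2)$-cyclic surgeries there are lens spaces or connected sums of lens spaces, hence atoroidal) reproduces the paper's own reduction, and your closing assessment of why the integer case resists --- no Gordon--Luecke-type finiteness for integral toroidal fillings, and the changemaker obstruction already failing to close the five cases of Theorem~\ref{thm:2-2a+1-2-2b+1} (and the case of $Y(T_{3,5},T_{-3,5})$, cf.\ Proposition~\ref{prop:changemaker-35}) --- matches the authors' discussion.

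Two steps in your sketch, however, would fail as written. First, your reduction to hyperbolic $K$ rests on the claim that a satellite's companion tori ``survive the surgery.'' That is precisely what is unavailable at integral slopes: the paper's proof of Theorem~\ref{thm:y-nonintegral-surgery} handles compressing companion tori via \cite[Theorem~2.0.1]{cgls}, which forces either $\Delta(\mu,r)\leq 1$ or a cable space; for non-integral $r$ this yields the cable case, but for integral $r$ one has $\Delta(\mu,r)=1$ and the theorem gives nothing. Companion tori genuinely can compress under integer surgery --- satellites with $1$-bridge braid patterns admit integral surgeries for which the pattern space fills to a solid torus (Gabai, Berge) --- and the Miyazaki--Motegi theorem \cite{miyazaki-motegi-3}, which the paper uses to force the pattern to be a cable when the torus \emph{does} survive, likewise applies only to non-integral slopes. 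So ``this should force any companion to be trivial'' hides an open sub-problem, not a routine JSJ comparison. Second, Greene's Theorem~\ref{thm:changemaker} requires $Y$ to bound a \emph{sharp} $4$-manifold, not merely that $Y$ be an L-space; your route through the graph-manifold case of the L-space conjecture would at best certify the L-space condition. The paper instead gets both at once from the alternating links $L(T_{a,b},T_{c,d})$ of \cite{zentner-simple}: by \cite{osz-branched} the Goeritz form of an alternating diagram gives a sharp definite filling of the branched double cover, and it is this concrete lattice (white graph for slope $+n$, black graph for $-n$) that feeds into the changemaker computation of \cite{greene}. With these two corrections your outline accurately describes the state of the art, which is exactly why the statement remains a conjecture.
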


Since the initial appearance of this paper, Duncan McCoy has disproved Conjecture~\ref{conj:not-surgery} by showing $Y(T_{3,5},T_{-3,5})$ arises as $226$-surgery on a knot in $S^3$; his proof is given below as Proposition~\ref{prop:L35}.  The question of precisely which of the other $Y(T_{a,b},T_{c,d})$ can arise by surgeries on knots in $S^3$ remains very interesting.

On the other hand, Proposition~\ref{prop:surgery-link} says that every $Y(T_{a,b},T_{c,d})$ can be constructed by surgery on a 2-component link.  It remains a very interesting open problem to prove, via $SU(2)$ character varieties or otherwise, that some homology 3-sphere is not Dehn surgery on an $n$-component link for any fixed $n \geq 2$ \cite[Problem~3.102(B)]{kirby-list}.

\subsection*{Organization}

In Section~\ref{sec:splicing} we study the topology of the manifolds $Y(T_{a,b},T_{c,d})$.  Section~\ref{sec:eudave-munoz} discusses the toroidal surgeries on the Eudave-Mu\~noz knots, building on work of Ni and Zhang \cite{ni-zhang}, and in particular determines which of these have the form $Y(T_{a,b},T_{c,d})$ and more generally which are $SU(2)$-cyclic.  Section~\ref{sec:cables} is dedicated to the proof of Theorem~\ref{thm:main-iterated-cables}, classfying $SU(2)$-cyclic surgeries on iterated torus knots.  Then in Section~\ref{sec:elementary-obstruction} we prove Theorem~\ref{thm:main-nonintegral} and deduce Theorems~\ref{thm:main-ab-ab} and \ref{thm:main-ab+ab}, and in Section~\ref{sec:changemakers} we review the changemaker condition and prove Theorem~\ref{thm:main-2-odd-2-odd}.

We identify $SU(2)$ with the group of unit quaternions throughout this paper.

\subsection*{Acknowledgments}

The second author is grateful for support by the SFB ``Higher invariants'' (funded by the Deutsche Forschungsgemeinschaft (DFG)) at the University of Regensburg, and for support by a Heisenberg fellowship of the DFG. He is also thankful for hospitality by the FIM at ETH Z\"urich in early 2018 when this work started.  The authors both thank Duncan McCoy for answering Question~\ref{q:L35} and providing the proof of Proposition~\ref{prop:L35}, and the referees for helpful feedback.

\section{Some $SU(2)$-cyclic graph manifolds} \label{sec:splicing}

In this section we review the family of $SU(2)$-cyclic manifolds $Y(T_{a,b},T_{c,d})$, which were originally studied by Motegi \cite{motegi}, and collect some facts about their topology.  Let $T_{a,b}$ and $T_{c,d}$ be two nontrivial torus knots.  Their exteriors $E_{a,b}$ and $E_{c,d}$ are Seifert fibered over the disk, and one can form a graph manifold
\[ Y(T_{a,b}, T_{c,d}) = E_{a,b} \cup_h E_{c,d}, \]
in which the gluing map $h: \partial E_{a,b} \to \partial E_{c,d}$ sends a meridian and a Seifert fiber of $E_{a,b}$ to a Seifert fiber and a meridian of $E_{c,d}$, respectively.  It is easy to see that $Y(T_{a,b},T_{c,d})$ is irreducible, since both $E_{a,b}$ and $E_{c,d}$ are.

In the sequel we will use $\mu_{a,b}$, $\lambda_{a,b}$, and $\sigma_{a,b}$ to denote a meridian, longitude, and Seifert fiber of $E_{a,b}$, and similarly for $\mu_{c,d}$, $\lambda_{c,d}$, and $\sigma_{c,d}$.  In $\pi_1(Y(T_{a,b},T_{c,d}))$ these are related by
\[ \sigma_{a,b} = (\mu_{a,b})^{ab} \lambda_{a,b} \qquad\mathrm{and}\qquad \sigma_{c,d} = (\mu_{c,d})^{cd} \lambda_{c,d}. \]

\begin{lemma} \label{lem:weight-one}
The fundamental group $\pi_1(Y(T_{a,b},T_{c,d}))$ is normally generated by $\mu_{a,b}$ and thus has weight one.  Moreover, $H_1(Y;\Z) \cong \Z/(abcd-1)\Z$.
\end{lemma}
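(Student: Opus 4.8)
The plan is to use a Mayer--Vietoris / van Kampen argument on the splitting $Y = E_{a,b} \cup_h E_{c,d}$ along the torus $T^2$. First I would record explicit presentations of the two pieces. The exterior of a torus knot $T_{a,b}$ is Seifert fibered over the disk with two exceptional fibers, so $\pi_1(E_{a,b}) \cong \langle x,y \mid x^a = y^b\rangle$, with the Seifert fiber $\sigma_{a,b}$ represented by the central element $x^a = y^b$. On the boundary torus, the meridian $\mu_{a,b}$ and the fiber $\sigma_{a,b}$ form a basis of $H_1(\partial E_{a,b})$, and the identity $\sigma_{a,b} = (\mu_{a,b})^{ab}\lambda_{a,b}$ recorded above pins down the longitude. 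The key external input I would import is precisely this presentation together with the gluing data, so that I can compute in $\pi_1(Y)$ by van Kampen.

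For the weight-one claim, I would argue that $\mu_{a,b}$ normally generates. Since $E_{a,b}$ is a knot exterior, its meridian $\mu_{a,b}$ normally generates $\pi_1(E_{a,b})$ by itself; that is a standard Wirtinger-type fact for knots in $S^3$. So the normal closure of $\mu_{a,b}$ in $\pi_1(Y)$ already contains all of $\pi_1(E_{a,b})$, and in particular it contains the Seifert fiber $\sigma_{a,b}$. But the gluing map $h$ sends $\sigma_{a,b}$ to the meridian $\mu_{c,d}$ of the other piece. Hence the normal closure also contains $\mu_{c,d}$, which by the same meridian argument normally generates $\pi_1(E_{c,d})$. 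Since van Kampen presents $\pi_1(Y)$ as the amalgamated product of the two pieces over $\pi_1(T^2)$, the normal closure of $\mu_{a,b}$ exhausts both factors and hence all of $\pi_1(Y)$, giving weight one.

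For the homology computation, I would abelianize. Working in $H_1$, set $m_1 = [\mu_{a,b}]$, $s_1 = [\sigma_{a,b}]$, $m_2 = [\mu_{c,d}]$, $s_2 = [\sigma_{c,d}]$. Each knot-exterior piece contributes $H_1(E_{a,b};\Z) \cong \Z$, generated by $\mu_{a,b}$, in which the longitude $\lambda_{a,b}$ is nullhomologous, so the relation $\sigma_{a,b} = (\mu_{a,b})^{ab}\lambda_{a,b}$ abelianizes to $s_1 = ab\, m_1$, and similarly $s_2 = cd\, m_2$. The gluing identifies $\mu_{a,b}\leftrightarrow \sigma_{c,d}$ and $\sigma_{a,b}\leftrightarrow \mu_{c,d}$, i.e. $m_1 = s_2$ and $s_1 = m_2$. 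Substituting, $m_1 = s_2 = cd\, m_2 = cd\, s_1 = cd(ab\, m_1) = abcd\, m_1$, so $(abcd-1)m_1 = 0$, and one checks $m_1$ generates. This yields $H_1(Y;\Z) \cong \Z/(abcd-1)\Z$, which one can confirm is the whole group via a Mayer--Vietoris sequence for the decomposition along $T^2$ (the tori and pieces all have free or cyclic $H_1$, so the sequence collapses cleanly).

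The main obstacle is not the algebra but justifying two structural facts cleanly: that a knot meridian normally generates the exterior's fundamental group, and that the Mayer--Vietoris (or abelianized van Kampen) bookkeeping for the amalgamation really produces a cyclic group of exactly order $abcd-1$ with no extra torsion. The first is standard; for the second, I would present the abelianization as a presentation matrix in the generators $m_1,s_1,m_2,s_2$ with the four relations above and compute its Smith normal form, confirming the single invariant factor $abcd-1$. Care is needed to ensure the gluing orientation conventions are consistent with the stated relation $\sigma_{a,b}=(\mu_{a,b})^{ab}\lambda_{a,b}$, but once the signs are fixed the computation is forced.
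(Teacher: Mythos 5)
Your proposal is correct and follows essentially the same route as the paper: the meridian normally generates each knot exterior, the gluing identifies $\sigma_{a,b}$ with $\mu_{c,d}$ so that the normal closure of $\mu_{a,b}$ swallows both amalgamated factors, and the homology claim follows by abelianizing the same relations $[\sigma_{a,b}]=ab[\mu_{a,b}]$, $[\sigma_{c,d}]=cd[\mu_{c,d}]$ together with the gluing identifications. The extra scaffolding you mention (explicit torus-knot presentations, Smith normal form, Mayer--Vietoris) is sound but not needed beyond what the paper already does.
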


\begin{proof}
The knot complements $E_{a,b}$ and $E_{c,d}$ are normally generated by the meridians $\mu_{a,b}$ and $\mu_{c,d}$ respectively. Since $\pi_1(Y)$ is an amalgamated free product
\[ \pi_1(Y) = \pi_1(E_{a,b}) \ast_{\pi_1(T^2)} \pi_1(E_{c,d}), \]
it is normally generated by $\mu_{a,b}$ and $\mu_{c,d} = \sigma_{a,b}$.  But as an element of $\pi_1(E_{a,b})$, the Seifert fiber $\sigma_{a,b}$ is a product of conjugates of $\mu_{a,b}$, so $\mu_{a,b}$ suffices to normally generate $\pi_1(Y)$.

The homology $H_1(Y)$ is generated by $[\mu_{a,b}]$ and $[\mu_{c,d}]$, with the relations $[\mu_{a,b}] = [\sigma_{c,d}] = cd[\mu_{c,d}]$ and likewise $[\mu_{c,d}] = ab[\mu_{a,b}]$.  This immediately implies that $H_1(Y)$ is in fact generated by $[\mu_{a,b}]$, which has order $|abcd-1|$, as claimed.
\end{proof}

\begin{lemma}[\cite{motegi, zentner-simple, ni-zhang}] \label{lem:splicing-cyclic}
Each $Y = Y(T_{a,b},T_{c,d})$ is $SU(2)$-cyclic.
\end{lemma}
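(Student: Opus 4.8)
The plan is to exploit the amalgamated product structure $\pi_1(Y) = \pi_1(E_{a,b}) \ast_{\pi_1(T^2)} \pi_1(E_{c,d})$ together with two standard features of $SU(2)$ and of torus knot groups. First, any representation $\pi_1(E_{a,b}) \to SU(2)$ is either reducible, in which case its image is abelian (hence contained in a maximal torus, since every abelian subgroup of $SU(2)$ lies in a conjugate of the diagonal $U(1)$), or irreducible, in which case Schur's lemma forces the central Seifert fiber $\sigma_{a,b}$ to map to a scalar $\pm 1$. Second, every $g \in SU(2) \ssm \{\pm 1\}$ has centralizer $Z(g)$ equal to the unique maximal torus containing it. Finally, by Lemma~\ref{lem:weight-one} the meridian $\mu_{a,b}$ normally generates $\pi_1(Y)$, while $\mu_{a,b}$ and $\mu_{c,d}$ normally generate the two knot groups. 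Throughout I would keep the gluing relations $\mu_{a,b} = \sigma_{c,d}$ and $\sigma_{a,b} = \mu_{c,d}$ in the foreground, as the entire argument turns on this swap.

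Now fix a representation $\rho \colon \pi_1(Y) \to SU(2)$ and write $G_1 = \rho(\pi_1(E_{a,b}))$, $G_2 = \rho(\pi_1(E_{c,d}))$, and $B = \rho(\mu_{a,b})$. Since $\mu_{a,b}$ normally generates $\pi_1(Y)$, the full image $\rho(\pi_1(Y))$ is the normal closure of $B$. If $B = \pm 1$, then this normal closure is contained in $\{\pm 1\}$, so the image is abelian and we are done immediately.

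The substantive case is $B \neq \pm 1$, and here I would use the swap directly. Because $\mu_{a,b} = \sigma_{c,d}$ is the Seifert fiber of the second piece, $B = \rho(\sigma_{c,d})$ is central in $G_2$; as $B \neq \pm 1$, this forces $G_2 \subseteq Z(B)$, a maximal torus, and in particular $G_2$ is abelian. It then remains to show $G_1 \subseteq Z(B)$ as well. Suppose instead that $G_1$ is nonabelian, i.e.\ $\rho|_{E_{a,b}}$ is irreducible; then $\rho(\sigma_{a,b}) = \pm 1$, and via $\sigma_{a,b} = \mu_{c,d}$ this reads $\rho(\mu_{c,d}) = \pm 1$. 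But $\mu_{c,d}$ normally generates $\pi_1(E_{c,d})$, so $G_2 = \{\pm 1\}$, whence $B = \rho(\sigma_{c,d}) \in G_2 = \{\pm 1\}$ contradicts $B \neq \pm 1$. Therefore $G_1$ is abelian, and since $B \in G_1$ with $B \neq \pm 1$ we conclude $G_1 \subseteq Z(B)$. Both $G_1$ and $G_2$ thus lie in the single maximal torus $Z(B)$, so $\rho(\pi_1(Y)) \subseteq Z(B)$ is abelian --- equivalently, has cyclic image by Definition~\ref{def:su2-cyclic} --- completing the proof.

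The one point needing care, and the only real obstacle, is the bookkeeping forced by the meridian/fiber swap: one must track that $B = \rho(\mu_{a,b}) = \rho(\sigma_{c,d})$ simultaneously normally generates the total image (as a meridian of the first knot) and is central in the second piece (as a Seifert fiber of the second), and dually that $\rho(\sigma_{a,b}) = \rho(\mu_{c,d})$ is central in the first piece while normally generating the second. The representation theory of torus knot groups and the centralizer structure of $SU(2)$ are entirely standard; assembling them correctly against this swap --- in particular extracting the contradiction that collapses $G_2$ to $\{\pm 1\}$ --- is where all the content lies.
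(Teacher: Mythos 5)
Your proof is correct and uses the same ingredients as the paper's: the centrality of the Seifert fibers, the meridian/fiber swap under the gluing, normal generation by meridians, and the fact that the centralizer of a non-central element of $SU(2)$ is its unique maximal torus. The paper runs the argument symmetrically (assuming $\rho$ irreducible, both restrictions land in maximal tori containing the respective fiber images, and these tori coincide), whereas you handle the two pieces asymmetrically with a sub-contradiction for $G_1$, but this is only a reorganization of the same proof.
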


\begin{proof}
Let $\rho: \pi_1(Y) \to SU(2)$ be a representation.  We note that each of $\mu_{a,b}$ and $\mu_{c,d}$ normally generates $\pi_1(Y)$, so if either $\rho(\mu_{a,b})$ or $\rho(\mu_{c,d})$ is $\pm 1$, then the image of $\rho$ is contained in $\{\pm 1\}$.  From now on we assume that neither of these elements is $\pm1$.

The element $\sigma_{a,b}$ is central in $\pi_1(E_{a,b})$, so $\rho(\sigma_{a,b})$ commutes with the image of $\pi_1(E_{a,b})$.  Since $\rho(\sigma_{a,b}) = \rho(\mu_{c,d})$ is not $\pm 1$, this is only possible if $\rho(\pi_1(E_{a,b}))$ lies in the unique $U(1)$ subgroup containing $\rho(\sigma_{a,b})$.  Similarly, $\rho(\pi_1(E_{c,d}))$ lies in the unique $U(1)$ subgroup containing $\rho(\sigma_{c,d})$.  But these subgroups both contain the element $\rho(\mu_{a,b}) = \rho(\sigma_{c,d})$, which is again not $\pm 1$, so they must coincide.  In other words, the image of $\rho$ lies in a single $U(1)$ subgroup and is hence abelian.
\end{proof}

\begin{lemma}[{\cite[Proposition~7.3]{sivek-zentner-menagerie}}] \label{lem:splicing-tori}
The torus $\partial E_{a,b} = \partial E_{c,d}$ is the unique closed, orientable, incompressible surface in $Y(T_{a,b},T_{c,d})$ up to isotopy.
\end{lemma}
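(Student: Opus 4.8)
The plan is to show that any connected, closed, orientable, incompressible surface $S \subset Y(T_{a,b},T_{c,d})$ is isotopic to the splicing torus $T := \partial E_{a,b} = \partial E_{c,d}$, by cutting $S$ along $T$ and analyzing the pieces inside each Seifert fibered exterior. Note first that $S$ is two-sided and, since $Y$ is irreducible, not a sphere. I would isotope $S$ to meet $T$ transversally with $|S\cap T|$ minimal. Because $T$ is incompressible in $Y$ (being incompressible in each of $E_{a,b}$ and $E_{c,d}$) and $Y$ is irreducible, a standard innermost-disk argument lets me assume that $S\cap T$ contains no circle bounding a disk in $S$ or in $T$; hence every circle of $S\cap T$ is essential in both surfaces. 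As disjoint essential simple closed curves on the torus $T$, they are all mutually parallel, so they share a single slope $s \in H_1(T)$.

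The case $S\cap T = \emptyset$ is quickly dispatched. Then $S$ lies in the interior of one exterior, say $E_{a,b}$, which is Seifert fibered over a disk with two exceptional fibers and, for a nontrivial torus knot, has hyperbolic base orbifold. By Waldhausen's classification of essential surfaces in Seifert fibered spaces, the incompressible surface $S$ is isotopic to a horizontal or a vertical surface. A horizontal surface covers the base and hence has nonempty boundary on $\partial E_{a,b}$, so $S$ is not horizontal; a closed vertical surface is a torus over a simple closed curve in the base disk, and such a curve either bounds a subdisk containing at most one cone point---making the torus compressible---or is parallel to $\partial D$. The latter forces $S$ to be isotopic to $T$, which is exactly the desired conclusion.

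It remains to rule out $S\cap T \ne \emptyset$, which is the crux. Write $S_1 = S\cap E_{a,b}$ and $S_2 = S \cap E_{c,d}$. Minimality of $|S\cap T|$ guarantees that each $S_i$ is incompressible and $\partial$-incompressible in its piece, since a $\partial$-compression would let me isotope $S$ across $T$ and reduce the intersection; thus Waldhausen applies to each $S_i$, and every component is horizontal or vertical. Because all boundary curves carry the common slope $s$, and the longitude and fiber slopes differ, each $S_i$ is purely horizontal or purely vertical, and I can read off its boundary slope: a horizontal surface in $E_{a,b}$ is Seifert-framed with boundary the longitude $\lambda_{a,b}$, while a vertical surface has boundary the fiber $\sigma_{a,b} = ab\,\mu_{a,b} + \lambda_{a,b}$, and similarly for $E_{c,d}$.

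The final step is the slope match. Using the gluing relations $\mu_{a,b} = \sigma_{c,d}$ and $\sigma_{a,b} = \mu_{c,d}$, I would express the four candidate boundary slopes in the basis $(\mu_{a,b},\lambda_{a,b})$ of $H_1(T)$ as
\[ \lambda_{a,b} = (0,1), \quad \sigma_{a,b} = (ab,1), \quad \sigma_{c,d} = (1,0), \quad \lambda_{c,d} = (1-abcd,\,-cd), \]
and then check that no slope from the $(a,b)$ list is parallel to one from the $(c,d)$ list: the relevant $2\times 2$ determinants are $-1$ in three cases and $abcd-1$ in the remaining one, all nonzero for nontrivial torus knots. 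Hence $s$ cannot simultaneously be a boundary slope for $S_1$ and for $S_2$, so the case $S\cap T \ne \emptyset$ is impossible, and $S$ is isotopic to $T$. The main obstacle is the bookkeeping behind $\partial$-incompressibility and the correct identification of the horizontal and vertical boundary slopes; once those are pinned down, the determinant computation---which reflects the fact that meridian and fiber become a dual pair after splicing---makes the matching obstruction transparent.
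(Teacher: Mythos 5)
Your argument is correct. Note that the paper does not actually prove this lemma---it is quoted from \cite[Proposition~7.3]{sivek-zentner-menagerie}---so there is no in-text proof to compare against; what you have written is a sound, self-contained reconstruction along the standard lines one would expect in that reference. The key steps all check out: minimality of $|S\cap T|$ gives essential pieces in each torus knot exterior, Waldhausen's horizontal/vertical dichotomy pins the possible boundary slopes down to $\{\lambda_{a,b},\sigma_{a,b}\}$ and $\{\lambda_{c,d},\sigma_{c,d}\}$ (the two classical boundary slopes $0$ and $pq$ of a torus knot exterior), and your slope computation in the basis $(\mu_{a,b},\lambda_{a,b})$ is right, with the four pairings giving determinants $\pm 1,\pm 1,\pm 1$, and $\pm(abcd-1)$, all nonzero since the knots are nontrivial. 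The closed case is also handled correctly: a closed surface in a Seifert fibered space over a disk cannot be horizontal, and the only essential vertical torus is the one over a curve enclosing both cone points, which is boundary-parallel.
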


\begin{proof}[Proof (sketch)]
Another incompressible surface must intersect the torus $T = \partial E_{a,b} = \partial E_{c,d}$ minimally in some parallel closed curves.  These curves bound a properly embedded, incompressible surface in $E_{a,b}$, so they must be either longitudes or Seifert fibers in $E_{a,b}$, and similarly they must be longitudes or Seifert fibers in $E_{c,d}$.  But the choice of gluing map ensures that these cannot happen simultaneously.
\end{proof}

\begin{proposition} \label{prop:linking-form}
Let $Y = Y(T_{a,b}, T_{c,d})$.  Then the linking form of $Y$ satisfies
\[ \ell_Y(\mu_{a,b}, \mu_{a,b}) = -\frac{cd}{abcd-1} \qquad\mathrm{and}\qquad \ell_Y(\mu_{c,d}, \mu_{c,d}) = -\frac{ab}{abcd-1}. \]
\end{proposition}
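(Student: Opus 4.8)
The plan is to compute $\ell_Y$ directly from its definition: for a torsion class $x \in H_1(Y;\Z)$ of order $N$, represented by a $1$-cycle, one writes $Nx = \partial S$ for a $2$-chain $S$, and then $\ell_Y(x,x) = \tfrac{1}{N}(S \cdot x') \bmod \Z$, where $x'$ is a parallel copy of $x$ in general position. By Lemma~\ref{lem:weight-one} the group $H_1(Y;\Z)$ is cyclic of order $N = abcd - 1$, generated by $[\mu_{a,b}]$, so the entire form is determined once we know $\ell_Y(\mu_{a,b},\mu_{a,b})$, and the second identity follows from the first by interchanging the roles of the two sides.

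First I would assemble the bounding chain out of Seifert surfaces. Let $F_{a,b} \subset E_{a,b}$ and $F_{c,d} \subset E_{c,d}$ be Seifert surfaces for the two torus knots, so that $\partial F_{a,b} = \lambda_{a,b}$ and $\partial F_{c,d} = \lambda_{c,d}$ lie on the central torus $T = \partial E_{a,b} = \partial E_{c,d}$. Working in $H_1(T;\Z)$ with the basis $\{\mu_{a,b}, \sigma_{a,b}\}$ (note $\mu_{a,b}\cdot\sigma_{a,b} = 1$), the relation $\sigma_{a,b} = ab\,\mu_{a,b} + \lambda_{a,b}$ gives $\lambda_{a,b} = -ab\,\mu_{a,b} + \sigma_{a,b}$, while the gluing identifications $\sigma_{c,d} = \mu_{a,b}$ and $\mu_{c,d} = \sigma_{a,b}$ together with $\sigma_{c,d} = cd\,\mu_{c,d} + \lambda_{c,d}$ give $\lambda_{c,d} = \mu_{a,b} - cd\,\sigma_{a,b}$. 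A short linear computation then yields
\[ -cd\,\lambda_{a,b} - \lambda_{c,d} = (abcd-1)\,\mu_{a,b} = N\mu_{a,b} \]
in $H_1(T;\Z)$. Hence $S := -cd\,F_{a,b} - F_{c,d}$, after correcting its boundary by a $2$-chain supported in $T$, is a $2$-chain in $Y$ with $\partial S = N\mu_{a,b}$.

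It then remains to compute the intersection number $S \cdot \mu'_{a,b}$. Here I would take $\mu'_{a,b}$ to be a parallel copy of $\mu_{a,b}$ pushed off $T$ into the interior of $E_{a,b}$. Such a curve is disjoint from $F_{c,d}$ and from the correction chain in $T$, so the only contribution comes from $-cd\,F_{a,b}$, and the standard duality between a Seifert surface and a meridian gives $F_{a,b}\cdot\mu'_{a,b} = 1$. Thus $S\cdot\mu'_{a,b} = -cd$ and $\ell_Y(\mu_{a,b},\mu_{a,b}) = -cd/(abcd-1) \bmod \Z$, as claimed. The main things to be careful about are the orientation conventions, which fix the sign and the precise value $F_{a,b}\cdot\mu'_{a,b} = 1$ rather than $-1$; as a consistency check one can instead push $\mu'_{a,b}$ into $E_{c,d}$, where it becomes a pushed-off Seifert fiber meeting $F_{c,d}$ algebraically $cd$ times, recovering the same value $S\cdot\mu'_{a,b} = -cd$ and confirming that the self-linking is well defined independent of these choices.
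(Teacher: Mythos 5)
Your proposal is correct and follows essentially the same route as the paper's proof: both derive the relation $(abcd-1)\mu_{a,b} = -cd\,\lambda_{a,b} - \lambda_{c,d}$ in $H_1(T^2)$ and then bound this class by $cd$ copies of a Seifert surface in $E_{a,b}$ together with one in $E_{c,d}$ (orientations reversed), computing the intersection with a pushed-off meridian to be $-cd$. Your additional care with the correction chain in $T$ and the consistency check via the other push-off are fine but not needed beyond what the paper records.
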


\begin{proof}
We prove the first assertion, and the second follows by an identical argument.  Since $\mu_{a,b} = \sigma_{c,d}$ and $\sigma_{a,b} = \mu_{c,d}$, we compute in $H_1(T^2)$ that
\begin{align*}
(abcd-1)\mu_{a,b} &= cd(\sigma_{a,b}-\lambda_{a,b}) - \mu_{a,b} \\
&= cd\mu_{c,d} - cd\lambda_{a,b} - \sigma_{c,d} \\
&= -\lambda_{c,d} - cd\lambda_{a,b}.
\end{align*}
Thus $(abcd-1)\mu_{a,b}$ is homologous in $T^2$ to $-\lambda_{c,d} - cd\lambda_{a,b}$, which bounds the union of one Seifert surface for $T_{c,d}$ in $E_{c,d}$ and $cd$ parallel Seifert surfaces for $T_{a,b}$ in $E_{a,b}$, all with orientation reversed.  The meridian $\mu_{a,b}$ intersects this surface bounded by $(abcd-1)\mu_{a,b}$ with multiplicity $-cd$, so the lemma now follows from the definition of the linking form.
\end{proof}

Our ultimate goal is to show that many of the manifolds $Y(T_{a,b},T_{c,d})$ cannot be realized by Dehn surgery on a single knot in $S^3$.  We note that this is the best possible result, in the following sense.

\begin{proposition} \label{prop:surgery-link}
Each 3-manifold $Y=Y(T_{a,b},T_{c,d})$ can be produced by Dehn surgery on a two-component link in $S^3$.
\end{proposition}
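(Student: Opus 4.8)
The plan is to build $Y$ directly by surgery on a genus-one Heegaard splitting of $S^3$, placing one torus knot in each handlebody. Write $S^3 = \hat V_1 \cup_{\hat T} \hat V_2$ for the standard genus-one splitting, so that $\hat V_1,\hat V_2$ are unknotted solid tori whose cores $O_1,O_2$ form a Hopf link and whose common boundary is the Heegaard torus $\hat T$. I would place a knot $L_1\subset \inr(\hat V_1)$ as a torus-knot pattern on a concentric torus, and similarly a knot $L_2\subset \inr(\hat V_2)$, and take the link to be $L=L_1\cup L_2$. The point is that a single Dehn surgery on $L_1$ can convert the solid torus $\hat V_1$ into the torus knot exterior $E_{a,b}$: the core $O_1$ becomes one exceptional fiber and the surgery solid torus the other, so that $\hat V_1\setminus N(L_1)$ refilled along a non-fiber slope is Seifert fibered over the disk with two exceptional fibers. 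Symmetrically, surgery on $L_2$ converts $\hat V_2$ into $E_{c,d}$. Since both surgeries are supported in the interiors of the handlebodies, they fix $\hat T$, and the surgered manifold is $E_{a,b}\cup_{\hat T} E_{c,d}$.

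First I would record the relevant Seifert data. Each torus knot exterior $E_{a,b}$ is Seifert fibered over the disk with two exceptional fibers of orders $a$ and $b$, and deleting one of these fibers leaves a fibered solid torus; dually this presents $E_{a,b}$ as a single Dehn surgery on a knot inside a solid torus, along a slope read off from the Seifert invariants. Choosing the winding number of the pattern $L_1$ and its surgery coefficient to match these invariants makes $\hat V_1\setminus N(L_1)$, refilled, Seifert isomorphic to $E_{a,b}$, and pins down the induced identification $\partial \hat V_1 = \partial E_{a,b}$; in particular the Seifert fiber $\sigma_{a,b}$ and the meridian $\mu_{a,b}$ become explicit slopes on $\hat T$ in terms of the meridian--longitude basis $(\mu_{\hat V_1},\lambda_{\hat V_1})$ of the handlebody. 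The same applies to $L_2$ and $\hat V_2$.

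The crux is to verify that the Heegaard gluing identifies these two Seifert pieces along $\hat T$ by exactly the splice map $h$, namely $\mu_{a,b}\mapsto \sigma_{c,d}$ and $\sigma_{a,b}\mapsto \mu_{c,d}$. This reduces to a slope computation on $\hat T$: using the relation $\sigma_{a,b}=(\mu_{a,b})^{ab}\lambda_{a,b}$ from the beginning of this section, one expresses each meridian and Seifert fiber in the handlebody basis and checks that the standard Heegaard identification $\mu_{\hat V_1}\leftrightarrow \lambda_{\hat V_2}$, $\lambda_{\hat V_1}\leftrightarrow\mu_{\hat V_2}$ carries one pair of curves to the other. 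I expect this matching to be the only delicate step, since it is precisely where the non-standard, meridian-to-fiber nature of the gluing must be produced; the surgery coefficients on $L_1$ and $L_2$ are then determined by forcing it to hold. As a consistency check, these coefficients should make the linking matrix of $L_1\cup L_2$ have determinant $\pm(abcd-1)$, in agreement with the computation $H_1(Y)\cong \Z/(abcd-1)\Z$ of Lemma~\ref{lem:weight-one}.

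If the bookkeeping becomes unwieldy, I would fall back on the canonical plumbing description of the graph manifold $Y$ and simplify it by Rolfsen twists and slam-dunks. Here one must be careful: slam-dunking the arm encoding one exceptional fiber turns the central coefficient rational and thereby blocks collapsing the second arm of the same node, so a naive reduction leaves four components rather than two. The efficient route is exactly the one above, in which each torus knot exterior is supplied \emph{for free} as the complement of a single knotted component, and the non-standard gluing is what keeps the resulting two-component diagram from degenerating to a lens space.
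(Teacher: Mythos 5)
Your construction points in a workable direction, but the proof has a genuine gap exactly at the step you label ``the crux,'' and it is more than a routine slope computation. Once you arrange that the surgered $\hat V_1$ is homeomorphic to $E_{a,b}$, its meridian $\mu_{a,b}$ is no longer a free parameter: it is the unique slope on $\hat T$ whose filling of that piece yields $S^3$. The splice condition then forces this particular slope to coincide with the fiber slope determined by the cabling type of $L_2$, and symmetrically the meridian of the surgered $\hat V_2$ must coincide with the fiber slope determined by the cabling type of $L_1$. So the two patterns cannot be chosen ``symmetrically'' and independently with coefficients solved for afterwards; you must solve a coupled system, and in particular the winding number of $L_2$ is forced to equal $\Delta(\mu_{a,b},\lambda_{\hat V_1})$, which must also happen to be $|c|$ or $|d|$. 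Relatedly, for coprime multiplicities most Seifert fibrations over $D^2$ with two exceptional fibers of orders $|a|,|b|$ are \emph{not} torus knot exteriors (they do not embed in $S^3$ with solid torus complement), so matching the fiber orders is not enough: you must either match the Seifert invariants on the nose or verify that some filling of each surgered handlebody is $S^3$. None of this existence question is addressed, and the linking-matrix determinant check you propose would not detect a failure of any of these conditions.

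The paper sidesteps all of this by running the argument from $Y$ back to $S^3$, so that no slope on $\hat T$ ever needs to be computed. Inside $Y$, drill out a fibered neighborhood of one exceptional fiber of $E_{c,d}$ and refill so that the Seifert fibration extends trivially over the new solid torus; the result is fibered over $D^2$ with a single exceptional fiber, hence is a solid torus, so after this one surgery $Y$ has become a Dehn filling of $E_{a,b}$, i.e.\ a surgery on the torus knot $T_{a,b}\subset S^3$. A second surgery, on the core of that filling---which may be isotoped off the first surgery region, being isotopic to the other exceptional fiber of $E_{c,d}$---returns $S^3$, and the dual two-component link then realizes $Y$ with no need to identify the link or its coefficients. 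If you do want an explicit link, this dual picture supplies one, namely $T_{a,b}$ together with a cable of it contained in a tubular neighborhood of $T_{a,b}$; note that this is a different link from your proposed union of cables of the two components of a Hopf link, which is further evidence that your configuration cannot simply be assumed to work and would need to be verified.
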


\begin{proof}
Writing $Y = E_{a,b} \cup_h E_{c,d}$, we will find a knot in $E_{c,d}$ on which some Dehn surgery produces $S^1 \times D^2$.  Performing this surgery inside $Y$ results in a Dehn filling of $E_{a,b}$, hence a Dehn surgery on the torus knot $T_{a,b}$, and so we can recover $S^3$ by one more surgery on the core of this filling.

Since $E_{c,d}$ is Seifert fibered over the disk with two singular fibers, we may remove a neighborhood of one singular fiber and identify its boundary with $S^1 \times S^1$ so that each circle $S^1 \times \{\pt\}$ is a regular fiber.  We then glue in a solid torus by identifying $S^1 \times S^1$ with $S^1 \times \partial D^2$, and extend the Seifert fibration across this solid torus as the projection $S^1 \times D^2 \to D^2$.  The result is now Seifert fibered over the disk with one singular fiber, hence it must be $S^1 \times D^2$.
\end{proof}

In fact, some of these manifolds do arise as surgeries on knots in $S^3$.  When the order $|abcd-1|$ of $H_1(Y(T_{a,b},T_{c,d}))$ is odd, then the second author showed that $Y(T_{a,b},T_{c,d})$ is the branched double cover of some knot $L(T_{a,b},T_{c,d})$ in $S^3$ \cite[Theorem~4.14]{zentner-simple}.  If $L(T_{a,b},T_{c,d})$ has unknotting number $1$, the Montesinos trick tells us that $Y(T_{a,b},T_{c,d})$ can be realized as half-integral surgery on a knot $K$ in $S^3$.  In Theorem~\ref{thm:y-nonintegral-surgery} we will determine precisely which of the $Y(T_{a,b},T_{c,d})$ arise in this way.

\begin{example}
The manifold $Y(T_{2,3},T_{2,-3})$ is the branched double cover of $8_{17}$ (see \cite[\S 5]{zentner-simple}), which has unknotting number 1.  One can check that this $SU(2)$-cyclic manifold is $\frac{37}{2}$-surgery on the $(-2,3,7)$-pretzel knot, which will appear in Theorem~\ref{thm:em-classification} as the mirror of the knot $k(2,2,0,0)$.
\end{example}

\section{Surgeries on the Eudave-Mu\~noz knots} \label{sec:eudave-munoz}

Eudave-Mu\~noz \cite{eudave-munoz-toroidal} found an infinite family of hyperbolic knots $k(l,m,n,p)$ in $S^3$ which have toroidal surgeries with half-integer slope; their exteriors were constructed as the branched double covers of tangles, as shown in Figure~\ref{fig:em-knots}.
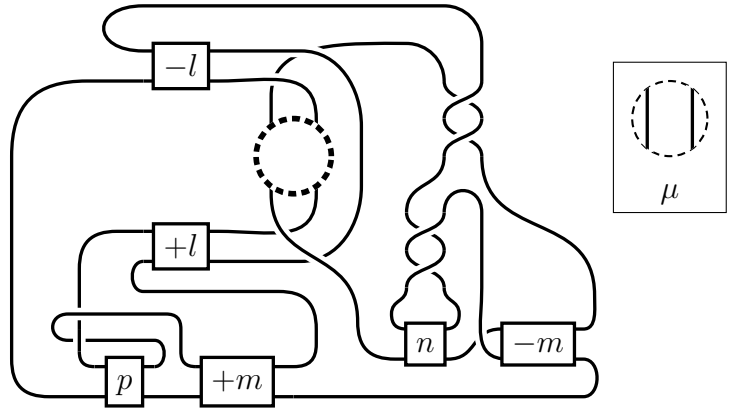
\begin{figure}
\tikzset{link/.style = { white, double = black, line width = 1.75pt, double distance = 1.25pt, looseness=1.25 }}
\begin{tikzpicture}
\draw[link,looseness=0.75] (-0.3,0) -- ++(0,0.8) to[out=90,in=180,looseness=1] (1.5,1.5) to[out=0,in=90,looseness=1] (2,1);
\draw[link] (0.3,0) -- ++(0,0.5) to[out=90,in=0] (-1,1) -- (-2,1); 
\draw[link] (-1,1.4) -- (-2,1.4) to[out=180,in=180,looseness=3] (-2,2) -- (2,2) to[out=0,in=90,looseness=1] (2.5,1.5) -- (2.5,1); 
\draw[link,looseness=1] (2,1) to[out=270,in=90] ++(0.5,-0.5) ++(-0.5,0) to[out=270,in=90] ++(0.5,-0.5);
\draw[link,looseness=1] (2.5,1) to[out=270,in=90] ++(-0.5,-0.5) ++(0.5,0) to[out=270,in=90] ++(-0.5,-0.5);
\draw[link,looseness=1] (-2,-1.4) -- (0.1,-1.4) to[out=0,in=270] (0.9,-0.4) -- (0.9,0.4) to[out=90,in=0] (0.1,1.4) -- (-1.5,1.4); 
\draw[link] (0.3,0) -- ++(0,-0.5) to[out=270,in=0] (-1,-1) -- (-2,-1); 
\draw[link] (0,-3.2) -- (-3.25,-3.2) to[out=180,in=270] (-3.75,-2.7) -- (-3.75,0) to[out=90,in=180] (-2.75,1) -- (-2,1); 
\draw[link] (-1.5,-2.8) -- (0,-2.8) to[out=0,in=270] (0.3,-2.5) -- (0.3,-2.3) to[out=90,in=0] (-0.2,-1.8) -- (-2,-1.8) to[out=180,in=180] (-2,-1.4); 
\draw[link] (-2.65,-2.8) -- (-1.85,-2.8); 
\draw[link] (-2,-1) -- (-2.35,-1) to[out=180,in=90] (-2.85,-1.5) -- (-2.85,-2) -- (-2.85,-2.6) to[out=270,in=180] (-2.65,-2.8); 
\draw[link] (-1.3,-2.8) to[out=180,in=270] (-1.5,-2.6) -- (-1.5,-2.3) to[out=90,in=0] (-1.7,-2.1) -- (-3,-2.1) to[out=180,in=180,looseness=2] (-3,-2.45) -- (-1.85,-2.45) to[out=0,in=0] (-1.85,-2.8); 
\draw[link] (-2.85,-2.6) -- ++(0,0.3); 
\draw[link] (1.75, -2.7) -- (2.05,-2.7) to[out=0,in=180] (2.75,-2.3) -- (3.25,-2.3); 
\draw[link,looseness=1] (1.5,-0.75) to[out=270,in=90] ++(0.5,-0.5) ++(-0.5,0) to[out=270,in=90] ++(0.5,-0.5); 
\draw[link,looseness=1] (2,-0.75) to[out=270,in=90] ++(-0.5,-0.5) ++(0.5,0) to[out=270,in=90] ++(-0.5,-0.5); 
\draw[link] (2,-0.75) to[out=90,in=90,looseness=2] (2.5,-0.75) -- (2.5,-1.75) to[out=270,in=180] (2.85,-2.7) -- (3.25,-2.7); 
\draw[link] (1.5,-0.75) to[out=90,in=270] (2,0); 
\draw[link] (3.25,-2.7) -- (3.85,-2.7) to[out=0,in=0] (3.85,-3.2) -- (0,-3.2); 
\draw[link] (3.25,-2.3) -- (3.85,-2.3) to[out=0,in=270,looseness=1] (4,-1.85) to[out=90,in=270] (2.5,0); 
\draw[link] (1.75,-2.7) -- (1.35,-2.7) to[out=180,in=270] (0.85,-2.2) to[out=90,in=270] (-0.3,-0.5) -- (-0.3,0); 
\draw[link] (2,-1.75) to[out=270,in=90] (2.2,-2.1) to[out=270,in=0] (2,-2.3) -- (1.5,-2.3) to[out=180,in=270] (1.3,-2.1) to[out=90,in=270] (1.5,-1.75); 
\node[draw=black,fill=white,line width=1.25pt,rectangle] at (-1.5,1.2) {$-l$};
\node[draw=black,fill=white,line width=1.25pt,rectangle] at (-1.5,-1.2) {$+l$};
\node[draw=black,fill=white,line width=1.25pt,rectangle] at (-2.25,-3) {$\vphantom{Im}p$};
\node[draw=black,fill=white,line width=1.25pt,rectangle] at (-0.75,-3) {$\vphantom{Ip}{+m}$};
\node[draw=black,fill=white,line width=1.25pt,rectangle] at (1.75,-2.5) {$\vphantom{-m}n$};
\node[draw=black,fill=white,line width=1.25pt,rectangle] at (3.25,-2.5) {$-m$};
\draw[line width=0.075cm,white,fill=white] (0,0) circle (0.5);
\draw[line width=0.075cm,densely dashed,fill=white] (0,0) circle (0.5);
\begin{scope} 
\draw[thin] (4.25,1.25) rectangle (5.75,-0.75);
\node at (5,-0.5) {$\mu$};
\draw[thick,densely dashed,fill=white] (5,0.5) circle (0.5);
\clip (5,0.5) circle (0.5);
\draw[link] (5.3,1) -- (5.3,0) (4.7,1) -- (4.7,0);
\end{scope}
\end{tikzpicture}
\caption{The tangle whose branched double cover is the exterior of the Eudave-Muñoz knot $k(l,m,n,p)$, as in \cite[Figure~2]{eudave-munoz-toroidal}.  Each box contains the indicated number of half-twists, counted with sign; we require that either $n=0$ or $p=0$.  (Filling in the tangle $\mu$ shown at right then turns this tangle into an unknot, whose branched double cover is $S^3$, proving that $k(l,m,n,p)$ is indeed a knot in $S^3$.)} \label{fig:em-knots}
\end{figure}
Gordon and Luecke \cite{gordon-luecke-nonintegral} then proved that any hyperbolic knot $K \subset S^3$ with a non-integral toroidal surgery is one of the $k(l,m,n,p)$, and the unique non-integral toroidal slope $r$ is the one determined by Eudave-Mu\~noz, namely
\begin{equation} \label{eq:r-value}
r = l(2m-1)(1-lm)-\frac{1}{2} + \begin{cases} n(2lm-1)^2, & p=0 \\ p(2lm-l-1)^2, & n=0, \end{cases}
\end{equation}
according to \cite[Proposition~5.3]{eudave-munoz-seifert}.  (By definition each knot $k(l,m,n,p)$ must have either $n=0$ or $p=0$, and a small number of sporadic values of $(l,m,n,p)$ are not allowed because the corresponding $k(l,m,n,p)$ would be a torus knot.)  Each of these knots is also known to be the closure of a positive braid, up to mirroring, and is hence fibered \cite[Proposition~5.6]{eudave-munoz-seifert}.

By building on work of Ni and Zhang \cite{ni-zhang}, who noticed that these surgeries on the knots $k(l,m,0,0)$ came from splicing together torus knot exteriors as in Section~\ref{sec:splicing}, we can completely classify which of the half-integral, toroidal surgeries on Eudave-Mu\~noz knots are $SU(2)$-cyclic, and which of these produce manifolds of the form $Y(T_{a,b},T_{c,d})$.

\begin{theorem} \label{thm:em-classification}
Let $r$ be the half-integral, toroidal slope for $K = k(l,m,n,p)$.  Then $Y = S^3_r(K)$ is $SU(2)$-cyclic if and only if $K$ is one of the following:
\begin{itemize}
\item $K = k(l,m,0,0)$, in which case $Y \cong \pm Y(T_{l,lm-1}, T_{2,-(2m-1)})$;
\item $K = k(l,m,1,0)$, in which case $Y \cong \pm Y(T_{-l,lm-1}, T_{2,2m+1})$;
\item $K = k(l,m,0,1)$, in which case $Y \cong \pm Y(T_{-l,ml-l-1}, T_{2,2m-1})$;
\item $K = k(l,m,0,p)$ with $l$ a multiple of $2p-1$, in which case $Y$ does not have the form $Y(T_{a,b},T_{c,d})$ unless $p$ is $0$ or $1$.
\end{itemize}
\end{theorem}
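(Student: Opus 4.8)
The plan is to read off an explicit graph-manifold decomposition of $Y = S^3_r(k(l,m,n,p))$ from the work of Ni and Zhang \cite{ni-zhang}, and then to analyze its $SU(2)$ character variety piece by piece. The half-integral toroidal surgery produces an essential torus $T$ splitting $Y$ into two Seifert-fibered pieces, $Y = M_1 \cup_T M_2$, and the first task is to record the Seifert invariants of $M_1$ and $M_2$ together with the gluing map $h$ as explicit functions of $(l,m,n,p)$. The expected picture is that $M_2$ is always a $T_{2,\mathrm{odd}}$ torus knot exterior, consistent with the fact that every $SU(2)$-cyclic answer in the statement has a $T_{2,\mathrm{odd}}$ factor, while $M_1$ is Seifert-fibered over the disk with two cone points whose orders are governed by the twist parameters $n$ and $p$. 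In the simplest regimes these orders are coprime and $M_1$ is itself a torus knot exterior, whereas a parameter such as $p \geq 2$ introduces a genuine cone point of order $2p-1$.

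For the direction that the listed knots yield $SU(2)$-cyclic surgeries, the first three cases should follow by directly matching the decomposition to $\pm Y(T_{a,b},T_{c,d})$ with the stated parameters and then invoking Lemma~\ref{lem:splicing-cyclic}; here I would use the order of $H_1$ from Lemma~\ref{lem:weight-one} and the linking form of Proposition~\ref{prop:linking-form} to pin down the parameters and fix the overall sign. For the fourth case, where $M_1$ is genuinely more complicated, I would argue via the pillowcase. Each piece $M_i$ determines an arc system in $\Hom(\pi_1 T, SU(2))/\mathrm{conj}$ as the image of the restriction map, and the irreducible representations of $\pi_1(Y)$ correspond to intersections of the $M_2$-arc with the image under $h$ of the $M_1$-arc, with $Y$ being $SU(2)$-cyclic exactly when all such intersections lie at abelian corner points. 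The central point is that an irreducible restriction to a Seifert piece forces its central fiber to map to $\pm 1$, which constrains the slope of that piece's pillowcase arc; the divisibility $2p-1 \mid l$ should be precisely the arithmetic condition guaranteeing that the two arc systems meet only at corners.

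For the converse, that no other $k(l,m,n,p)$ has an $SU(2)$-cyclic toroidal surgery, I would exhibit an explicit irreducible representation whenever the conditions on $(n,p)$ fail. Concretely, when $M_1$ admits an irreducible $SU(2)$ representation sending its fiber to $\pm 1$ whose boundary restriction, pushed across $h$, meets the interior of the $M_2$-arc, the two restrictions can be conjugated into agreement and glued to a non-abelian $\rho : \pi_1(Y) \to SU(2)$. The failure of the divisibility (or smallness) condition on the parameters is exactly what creates such an interior intersection, so this step reduces to a counting argument on the pillowcase once the arc slopes have been computed in the first step.

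Finally, to see that the fourth-case manifolds are not of the form $Y(T_{a,b},T_{c,d})$ when $p \notin \{0,1\}$, I would compare geometric decompositions. By Lemma~\ref{lem:splicing-tori} any $Y(T_{a,b},T_{c,d})$ has a unique incompressible torus, splitting it into two torus knot exteriors, each Seifert-fibered over the disk with \emph{coprime} cone-point orders. When $p \geq 2$ the piece $M_1$ carries a cone point of order $2p-1 > 1$ dividing $l$, so its two cone-point orders share the common factor $2p-1$ and $M_1$ cannot be a torus knot exterior; hence the decomposition of $Y$ differs from that of every $Y(T_{a,b},T_{c,d})$. The main obstacle I anticipate is the careful bookkeeping required to extract the Seifert invariants and the gluing matrix from \cite{ni-zhang} uniformly across all four parameter regimes, since the pillowcase slope computation, and therefore the whole dichotomy, depends delicately on these data.
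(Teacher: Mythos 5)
Your overall architecture coincides with the paper's: take the Ni--Zhang decomposition $Y = X_1 \cup_{T^2} X_2$ into two Seifert pieces, identify the first three families with $\pm Y(T_{a,b},T_{c,d})$ by matching $|H_1|$ (this is Lemma~\ref{lem:em-vs-torus-splicing}) and invoking Lemma~\ref{lem:splicing-cyclic}, analyze $SU(2)$ representations through the splitting for the remaining families, and rule out the form $Y(T_{a,b},T_{c,d})$ when $p \notin \{0,1\}$ exactly as in Proposition~\ref{prop:su2-cyclic-but-not-splicing}, via the common factor $2p-1$ in the cone-point orders together with the uniqueness of the incompressible torus. So the plan is sound in outline.

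There are, however, two concrete problems. First, your structural claim that ``$M_2$ is always a $T_{2,\mathrm{odd}}$ torus knot exterior'' while $M_1$ absorbs both twist parameters is false: $X_2$ is governed by $n$ (its meridional filling is a lens space of order $|2n-1|$, so it is a knot exterior only for $n\in\{0,1\}$) and $X_1$ is governed by $p$; since by definition $n=0$ or $p=0$, exactly one piece can fail to be a knot exterior, and for $k(l,m,n,0)$ with $n\neq 0,1$ it is the $T_{2,\mathrm{odd}}$ side that degenerates. The paper treats precisely these knots (Proposition~\ref{prop:lmn0-classification}) not by a pillowcase intersection count but by noting that the order of $H_1(X_2)$ forces the kernel curve $x$ of $H_1(\partial X_2)\to H_1(X_2)$ to be glued to a curve $\gamma\subset\partial X_1$ with $\Delta(\sigma_1,\gamma)=|2n-1|\geq 3$, so $\pi_1(Y)$ surjects onto $\pi_1(X_1(\gamma))$, a Seifert fibration over $S^2(|l|,|lm-1|,\delta)$ which is not $SU(2)$-cyclic. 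Second, for $k(l,m,0,p)$ the decisive content is not an ``arc slope'' but a structural dichotomy (Lemma~\ref{lem:lm0p-reducible}: any irreducible $\rho$ restricts to an abelian representation on $X_1$ and an irreducible one on $X_2$, with $\rho(\mu_1)=\rho(\sigma_2)=-1$) followed by an explicit computation in $H_1(X_1)$: the abelian characters of $X_1$ sending $\mu_1\mapsto -1$ form a finite set, and inverting the presentation matrix of $H_1(X_1(\mu_1))$ shows that $\rho(\sigma_1)=\pm1$ is forced exactly when $2p-1\mid l$, while otherwise one exhibits $\rho(\sigma_1)=e^{i\varphi}$ with $\varphi\in[\pi/3,2\pi/3]$ and extends over the $T_{2,k}$ exterior. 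Your proposal correctly guesses that $2p-1\mid l$ is the decisive arithmetic condition, but the sentence ``should be precisely the arithmetic condition'' is where the whole proof lives; nothing in the proposal indicates how the divisibility actually enters, and you would need to carry out this $H_1$ computation (or an equivalent one) to close the argument.
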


\begin{proof}
In the case of $k(l,m,0,0)$, Ni and Zhang \cite[p.\ 87]{ni-zhang} proved that the $r$-surgery is $Y(T_{a,b},T_{c,d})$ for some $a,b,c,d$ (to be determined in Lemma~\ref{lem:em-vs-torus-splicing}), which implies that it is $SU(2)$-cyclic by Lemma~\ref{lem:splicing-cyclic}.  The identification for $k(l,m,1,0)$ follows immediately, since this is the mirror of $Y(-l,-m,0,0)$.  In Proposition~\ref{prop:lmn0-classification} we prove that the remaining $k(l,m,n,0)$ are not $SU(2)$-cyclic.

For the knots $k(l,m,0,p)$, Proposition~\ref{prop:lm0p-classification} proves that these $r$-surgeries are $SU(2)$-cyclic if and only if $2p-1$ divides $l$.  We have already seen the cases $p=0$ and $p=1$, since in the latter case $k(l,m,0,1)$ is the mirror of $k(-l,1-m,0,0)$, for which we have already identified $Y$.  In the remaining cases where $2p-1 \mid l$, Proposition~\ref{prop:su2-cyclic-but-not-splicing} proves that $Y$ is not the result of splicing two knot complements together.
\end{proof}

\begin{remark}
We compare our results to those of \cite[Section~5]{ni-zhang} here.  Ni and Zhang classified the $SL(2,\C)$-cyclic $r$-surgeries among the knots $k(l,m,n,0)$, and proved that the $r$-surgeries on $k(l,m,0,p)$ are not $SL(2,\C)$-cyclic when $2p-1 \nmid l$. Their representations factor through triangle groups which have faithful $SL(2,\C)$ representations, but these groups do not always have non-cyclic $SU(2)$ representations, so proving that they are not $SU(2)$-cyclic requires some additional work.  For the surgeries on $k(l,m,0,p)$ with $2p-1 \mid l$, the claims that these are $SU(2)$-cyclic and that they are not $Y(T_{a,b},T_{c,d})$ are new.  In any case, our work relies heavily on \cite{ni-zhang}.
\end{remark}

We begin by describing the geometry of the half-integral, toroidal $r$-surgery on $K=k(l,m,n,p)$.  According to the discussion before \cite[Lemma~4.4]{ni-zhang}, the incompressible torus in $S^3_r(K)$ splits it into two pieces, labeled $X_1$ and $X_2$, each of which is Seifert fibered over $D^2$ with two singular fibers.  There are curves $\mu_1,\sigma_1 \subset \partial X_1$ and $\mu_2,\sigma_2 \subset \partial X_2$, each pair having distance $\Delta(\mu_i,\sigma_i)=1$, whose corresponding Dehn fillings are as follows:
\begin{itemize}
\item $X_1(\mu_1)$ is a lens space of order $|2p-1|$.
\item $X_1(\sigma_1)$ is a connected sum of two lens spaces, of orders
\[ |{-}l| \textrm{ and } |(1-lm)(2p-1)+pl|. \]
\item $X_2(\mu_2)$ is a lens space of order $|2n-1|$.
\item $X_2(\sigma_2)$ is a connected sum of two lens spaces, of orders
\[ 2 \textrm{ and } |2m(2n-1)+1|. \]
\end{itemize}
It follows that $\sigma_1$ and $\sigma_2$ are the Seifert fibers of $X_1$ and $X_2$ respectively, since their Dehn fillings are reducible.  Moreover, when $p = 0$ we have $X_1(\mu_1) = S^3$, so $X_1$ is a knot complement, and since it is Seifert fibered it must be a torus knot complement \cite{moser}.  The unique reducible surgery on $T_{a,b}$ (of slope $ab$) is $L(a,b) \# L(b,a)$, so the description of $X_1(\sigma_1)$ says that if $p=0$ then $X_1$ is the complement of $T_{l,\pm(lm-1)}$.  Similarly, if $n=0$ then $X_2$ must be the complement of $T_{2,\pm(2m-1)}$.

We can now explicitly identify the half-integer toroidal surgeries on the knots $k(l,m,0,0)$.  We note that according to \cite{eudave-munoz-toroidal}, if $n=p=0$ then we must have $l \neq -1,0,1$ and $m \neq 0,1$, and also $(l,m) \neq (-2,-1)$.  (For the remaining $k(l,m,0,0)$, it is easy to check that $r<0$.)

\begin{lemma} \label{lem:em-vs-torus-splicing}
Let $r$ be the slope of the half-integral, toroidal, $SU(2)$-cyclic surgery on $k(l,m,0,0)$.  Then
\[ S^3_r(k(l,m,0,0)) \cong \pm Y(T_{l,lm-1}, T_{2,-(2m-1)}). \]
\end{lemma}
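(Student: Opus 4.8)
The plan is to combine the description of the JSJ decomposition of $S^3_r(K)$ given just above with the fact, due to Ni and Zhang, that this surgery is already known to be a splice $Y(T_{a,b},T_{c,d})$. The gluing is therefore handed to us, and the only remaining task is to pin down the four parameters, and in particular the relative signs of the two torus knots.

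First I would record what the preceding discussion gives when $n=p=0$: the incompressible torus splits $S^3_r(K)$ into pieces $X_1$ and $X_2$, where $X_1$ is the exterior of $T_{l,\pm(lm-1)}$ (since $X_1(\mu_1)=S^3$ while $X_1(\sigma_1)$ is the reducible filling, a connected sum of lens spaces of orders $|l|$ and $|lm-1|$), and likewise $X_2$ is the exterior of $T_{2,\pm(2m-1)}$ (from the orders $2$ and $|2m-1|$). Here $\mu_i$ is a meridian and $\sigma_i$ the Seifert fiber, and the relevant parameters are automatically coprime. Since the surgery is a splice $Y(T_{a,b},T_{c,d})$, Lemma~\ref{lem:splicing-tori} identifies its JSJ pieces as $E_{a,b}$ and $E_{c,d}$; by uniqueness of the JSJ decomposition these must coincide with $\{X_1,X_2\}$ as unoriented manifolds, so
\[ S^3_r(K) \cong Y(T_{l,\epsilon_1(lm-1)},\, T_{2,\epsilon_2(2m-1)}) \]
for some signs $\epsilon_1,\epsilon_2 \in \{\pm 1\}$, up to an overall orientation reversal. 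That reversal mirrors both torus knots simultaneously, preserving the product $\epsilon_1\epsilon_2$, and is absorbed into the $\pm$ in the statement.

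It then remains only to determine $\epsilon_1\epsilon_2$, which I would do by comparing first homology. On one hand, Equation~\eqref{eq:r-value} with $n=p=0$ gives $r = l(2m-1)(1-lm)-\tfrac12$, so writing $r = P/2$ with $P$ odd yields
\[ |H_1(S^3_r(K);\Z)| = |P| = |2l(lm-1)(2m-1)+1|. \]
On the other hand, Lemma~\ref{lem:weight-one} gives $|H_1(Y(T_{l,\epsilon_1(lm-1)},T_{2,\epsilon_2(2m-1)}))| = |\epsilon_1\epsilon_2\cdot 2l(lm-1)(2m-1)-1|$. Setting $N = 2l(lm-1)(2m-1)$, the two candidate orders are $|N-1|$ when $\epsilon_1\epsilon_2=+1$ and $|N+1|$ when $\epsilon_1\epsilon_2=-1$; these agree only when $N=0$, which is excluded by the standing hypotheses $l\neq -1,0,1$ and $m\neq 0,1$ (and $(l,m)\neq(-2,-1)$). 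Since the true order is $|N+1|$, we must have $\epsilon_1\epsilon_2=-1$, and therefore $S^3_r(K)\cong \pm Y(T_{l,lm-1},T_{2,-(2m-1)})$.

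The argument is thus almost entirely formal once the splice structure is in hand, so I expect the main obstacle to be one of sign and orientation bookkeeping: carefully reconciling the orientations implicit in "$X_i$ is the exterior of $T_{\,\cdot\,,\pm(\cdot)}$" and in the definition of $Y(T_{a,b},T_{c,d})$ with the fixed orientation of $S^3_r(K)$, and confirming that it is precisely the product $\epsilon_1\epsilon_2$ — and not the two signs individually — that the homological computation resolves. Everything else follows from the uniqueness of the JSJ decomposition together with the splice structure already furnished by Ni and Zhang.
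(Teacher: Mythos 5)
Your proposal is correct and follows essentially the same route as the paper: identify $X_1$ and $X_2$ as the exteriors of $T_{l,\pm(lm-1)}$ and $T_{2,\pm(2m-1)}$ from the preceding discussion, so that $Y\cong Y(T_{l,\epsilon_1(lm-1)},T_{2,\epsilon_2(2m-1)})$, and then pin down $\epsilon_1\epsilon_2=-1$ by comparing $|H_1|=|2l(2m-1)(1-lm)-1|$ from Equation~\eqref{eq:r-value} with $|\epsilon_1\epsilon_2\cdot 2l(lm-1)(2m-1)-1|$, using $l\neq 0$ and $lm\neq 1$ to exclude the degenerate case. The only cosmetic difference is that you reach the splice structure via Ni--Zhang plus JSJ uniqueness rather than directly from the gluing data, which changes nothing of substance.
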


\begin{proof}
Let $Y = S^3_r(k(l,m,0,0)) = X_1 \cup_{T^2} X_2$ as above.  Then we have seen that $X_1$ and $X_2$ are the complements of $T_{l,\pm(lm-1)}$ and $T_{2,\pm(2m-1)}$ for some choices of sign, with $\mu_i$ and $\sigma_i$ the corresponding meridians and Seifert fibers.  Since $\mu_1$ is glued to $\sigma_2$ and $\sigma_1$ to $\mu_2$, we have $Y = Y(T_{l,\epsilon_1(lm-1)}, T_{2,\epsilon_2(2m-1)})$ for some signs $\epsilon_1,\epsilon_2 = \pm 1$ by definition, and then
\[ |H_1(Y(T_{l,\epsilon_1(lm-1)},T_{2,\epsilon_2(2m-1)}))| = |l\cdot\epsilon_1(lm-1) \cdot 2\cdot\epsilon_2(2m-1) - 1|. \]
But equation~\eqref{eq:r-value} says that $|H_1(Y)| = |2l(2m-1)(1-lm)-1|$, and since $l\neq 0$ and $lm\neq 1$, this agrees with $|H_1(Y)|$ if and only if $\epsilon_1\epsilon_2 = -1$.  Thus up to reversing orientation, we can take $\epsilon_1 = 1$ and $\epsilon_2 = -1$, proving the lemma.
\end{proof}

\begin{proposition} \label{prop:lmn0-classification}
Let $r$ be the slope of the half-integral, toroidal surgery on $K=k(l,m,n,0)$.  Then $Y = S^3_r(K)$ is $SU(2)$-cyclic if and only if $n$ is either 0 or 1.
\end{proposition}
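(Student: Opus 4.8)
The proposition has two directions, and essentially all the work is in one of them. When $n \in \{0,1\}$ the answer is already available: the case $n=0$ gives $Y \cong \pm Y(T_{l,lm-1},T_{2,-(2m-1)})$ by Lemma~\ref{lem:em-vs-torus-splicing}, while $k(l,m,1,0)$ is the mirror of $k(-l,-m,0,0)$, so in both cases $Y$ has the form $Y(T_{a,b},T_{c,d})$ and is therefore $SU(2)$-cyclic by Lemma~\ref{lem:splicing-cyclic}. So the plan is to prove the contrapositive of the converse: assuming $n \notin \{0,1\}$, equivalently $|2n-1|\ge 3$, I must produce an irreducible representation $\rho:\pi_1(Y)\to SU(2)$. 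Here $Y = X_1\cup_{T^2} X_2$, where $X_1$ is the complement of the nontrivial torus knot $T_{l,lm-1}$ and $X_2$ is Seifert fibered over $D^2$ with two exceptional fibers.

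First I would determine the shape of any irreducible $\rho$. The fibers $\sigma_1,\sigma_2$ are central in $\pi_1(X_1),\pi_1(X_2)$, and the gluing identifies $\mu_1=\sigma_2$ and $\sigma_1=\mu_2$. If $\rho|_{\pi_1(X_2)}$ were irreducible then $\rho(\sigma_2)=\pm1$, hence $\rho(\mu_1)=\pm1$; but $\mu_1$ normally generates $\pi_1(X_1)$, so $\rho|_{\pi_1(X_1)}$ would be central, and a short check using $\mu_2=\sigma_1$ then forces $\rho$ to be abelian. Thus in any irreducible $\rho$ the restriction to $\pi_1(X_2)$ is abelian, with image in a single $U(1)$, while $\rho|_{\pi_1(X_1)}$ carries the non-abelian part and must itself be irreducible. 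This reduces the problem to gluing an \emph{irreducible} representation $\rho_1$ of the torus-knot complement $X_1$ to an \emph{abelian} representation $\rho_2$ of $X_2$ along compatible boundary data, namely $\rho_1(\sigma_1)=\rho_2(\mu_2)$ and $\rho_1(\mu_1)=\rho_2(\sigma_2)$; because $\rho_1$ is irreducible, $\rho_1(\sigma_1)=\pm1$ automatically, so the first condition reads $\rho_2(\mu_2)=\pm1$.

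This is exactly where the hypothesis $|2n-1|\ge 3$ enters. If $\rho_2(\mu_2)=1$, then $\rho_2$ factors through $\pi_1(X_2(\mu_2))\cong\pi_1(L(2n-1,*))\cong\Z/(2n-1)$; since $\Delta(\mu_2,\sigma_2)=1$, the fiber $\sigma_2$ generates this group, so $\rho_2(\sigma_2)$ may be taken to be any $(2n-1)$-st root of unity, and a root different from $\pm1$ exists precisely when $|2n-1|\ge 3$. (The case $\rho_2(\mu_2)=-1$ is entirely analogous and gives access to the opposite fiber sign.) The plan is then to choose a nontrivial root of unity $\zeta$ from this list and to find an irreducible representation $\rho_1$ of $\pi_1(T_{l,lm-1})$ whose meridian holonomy equals $\zeta$ and whose central fiber $\rho_1(\sigma_1)$ has the matching sign; gluing $\rho_1$ to the corresponding $\rho_2$ yields an irreducible $\rho$, since its image already contains the non-abelian group $\rho_1(\pi_1(X_1))$. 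Conversely, when $|2n-1|=1$ the only admissible value is $\rho_2(\sigma_2)=\pm1$, which forces $\rho_1(\mu_1)=\pm1$ and hence makes $\rho_1$ reducible; this recovers the $SU(2)$-cyclicity of the $n\in\{0,1\}$ cases from the representation-theoretic side and shows the dichotomy is sharp.

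The main obstacle is the matching step: guaranteeing, for every admissible $(l,m)$ and every $n$ with $|2n-1|\ge 3$, that at least one of the finitely many admissible roots of unity $e^{2\pi i j/(2n-1)}$ is realized as the meridian holonomy of an irreducible representation of $T_{l,lm-1}$ with the required fiber sign. I would control this using Klassen's explicit description of the $SU(2)$ character variety of a torus knot: the irreducible locus is a union of open arcs along which $\rho_1(\sigma_1)$ is a constant sign and the meridian angle sweeps an interval with endpoints at rational multiples of $\pi$. Equivalently — in agreement with the triangle-group reps of Ni and Zhang — the whole representation factors through a von Dyck group $\Delta(|l|,|lm-1|,|2n-1|)$, and one must verify that this group has a non-cyclic $SU(2)$ representation, i.e.\ that angles $\tfrac{a\pi}{|l|}$, $\tfrac{b\pi}{|lm-1|}$, $\tfrac{c\pi}{|2n-1|}$ can be chosen to form a spherical triangle. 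The Eudave-Mu\~noz constraints force $|l|\ge 2$ and $|lm-1|\ge 2$ (the excluded pairs, such as $(l,m)=(-2,-1)$, are precisely those making $|lm-1|\le 1$), and the spherical-triangle inequalities are then satisfiable as soon as the third order $|2n-1|$ is at least $3$. Carrying this out uniformly — tracking the parities of $l$ and $lm-1$, handling both fiber signs through the $\rho_2(\mu_2)=\pm1$ dichotomy, and reducing the case $n<0$ to a positive one by mirroring — is the computational heart of the argument.
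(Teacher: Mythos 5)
Your setup is correct and runs parallel to the paper's: the $n\in\{0,1\}$ direction follows from Lemma~\ref{lem:em-vs-torus-splicing}, Lemma~\ref{lem:splicing-cyclic} and mirroring, and for $|2n-1|\ge 3$ the right move is indeed to build $\rho$ from an abelian representation of $\pi_1(X_2)$ glued to an irreducible representation of the torus knot group $\pi_1(X_1)$. But your argument stops exactly at the step that carries all the content. You must \emph{produce} an irreducible $\rho_1$ on $\pi_1(X_1)$ whose meridian holonomy is conjugate to a nontrivial $(2n-1)$-st root of unity (note that $\rho_2(\sigma_2)$ is forced to be an honest $(2n-1)$-st root of unity for either sign of $\rho_2(\mu_2)$, so $\rho_1(\mu_1)$ must be too), and you only describe a plan for doing this. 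The plan, as stated, is also not quite the right criterion: a non-cyclic $SU(2)$ representation of the von Dyck group $\Delta(|l|,|lm-1|,|2n-1|)$ is not sufficient, because the representation you need lives on a central extension of that group (the quotient $\pi_1(X_1)/\langle\langle \mu_1^{2n-1}\rangle\rangle$), and you must simultaneously control the sign of the central fiber $\sigma_1$ and arrange that $\rho_1(\mu_1)$ lands exactly on an $N$-th root of unity rather than on $-1$ times one --- a parity issue in the cone angles that your spherical-triangle sketch does not see. (The $(2,4,4)$ and $(3,3,3)$ cases in Theorem~\ref{thm:small-seifert-fibered} show that this central-extension bookkeeping genuinely matters for $SU(2)$, even if it cannot bite here because $\gcd(|l|,|lm-1|)=1$.) Klassen's arcs make the achievable meridian angles a union of \emph{open} intervals with endpoints in $\frac{\pi}{|l(lm-1)|}\Z$, and showing that some root-of-unity angle lies in one of them, for every admissible $(l,m,n)$ and with the matching fiber sign, is precisely the verification you have deferred. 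A separate, smaller issue: your opening claim that an irreducible $\rho|_{\pi_1(X_2)}$ forces $\rho$ to be abelian is asserted via ``a short check'' but is not obvious (an irreducible representation of $\pi_1(X_2)$ with $\rho(\mu_2)=-1$ is not immediately excluded); fortunately this classification is not needed for the construction.

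The paper closes the gap you leave open by a cleaner device that avoids all pointwise matching. Instead of choosing $\rho_2$ first, it abelianizes $\pi_1(X_2)$ wholesale: a homology computation (using that the orders of $H_1(X_2(\mu_2))$ and $H_1(X_2(\sigma_2))$ are coprime) shows $H_1(X_2)\cong\Z$ and that the kernel of $H_1(\partial X_2)\to H_1(X_2)$ is generated by a slope $x$ with $\Delta(\mu_2,x)=|2n-1|\ge 3$. Hence $\pi_1(Y)$ surjects onto $\pi_1(X_1)\ast_{\pi_1(T^2)}H_1(X_2)\cong\pi_1(X_1(\gamma))$, where $\gamma$ is the curve glued to $x$, and $X_1(\gamma)$ is an \emph{honest closed Dehn surgery} on the torus knot $T_{l,\pm(lm-1)}$ at distance $\ge 3$ from the Seifert slope. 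By Moser this is a small Seifert fibered space over $S^2(|l|,|lm-1|,\delta)$ with $\delta\ge 3$ and coprime first two multiplicities, so Theorem~\ref{thm:small-seifert-fibered} applies verbatim to say it is not $SU(2)$-cyclic; pulling back an irreducible representation finishes the proof. If you want to salvage your version, the most efficient repair is to recognize that your matching problem \emph{is} the statement that this Dehn filling is not $SU(2)$-cyclic, and cite Theorem~\ref{thm:small-seifert-fibered} rather than re-deriving it through triangle-group inequalities.
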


\begin{proof}
The mirror of $k(l,m,n,0)$ is $k(-l,-m,1-n,0)$, and the case $n=0$ (and hence its mirror $n=1$) is \cite[Corollary~5.3]{ni-zhang} (or Lemmas~\ref{lem:splicing-cyclic} and \ref{lem:em-vs-torus-splicing}), so we can assume $n \neq 0,1$ and then our goal is to show that $Y$ is not $SU(2)$-cyclic.  We observe that $|2n-1| \geq 3$, and that $X_1$ is the complement of $T_{l,\pm(lm-1)}$ since $p=0$.

The map $i_*: H_1(\partial X_2) \to H_1(X_2)$ has rank 1 by the ``half lives, half dies'' principle (see e.g.\ \cite[Lemma~3.5]{hatcher-3}), so we let $x \in H_1(\partial X_2;\Z)$ generate the kernel; then $x$ is uniquely a positive integral multiple of a primitive element $\lambda \in H_1(\partial X_2;\Z)$, with $i_*\lambda$ having finite order.  According to \cite[Lemma~3.2]{watson}, if $T$ is the torsion subgroup of $H_1(X_2;\Z)$ then we have
\[ |H_1(X_2(\alpha);\Z)| = |T| \cdot \operatorname{ord}_T(i_*\lambda) \cdot \Delta(\alpha,\lambda) \]
for all slopes $\alpha$.  But then
\[ |H_1(X_2(\mu_2))| = |2n-1| \quad\text{and}\quad |H_1(X_2(\sigma_2))| = 2|2m(2n-1)+1| \]
are relatively prime, so $T$ is trivial, hence $i_*\lambda = 0$.  This means that $x=\lambda$, and since $x$ is therefore primitive, we can extend it to an integral basis $x,y$ of $H_1(\partial X_2)$.

We now write
\[ [\mu_2] = ax + by, \qquad [\sigma_2] = cx+dy \]
for some integers $a,b,c,d$, and we get
\[ H_1(X_2(\mu_2)) = H_1(X_2) / \langle by \rangle, \qquad H_1(X_2(\sigma_2)) = H_1(X_2) / \langle dy \rangle. \]
Again, the orders of these groups are relatively prime, and since $H_1(X_2)/\langle y\rangle$ is a quotient of both groups its order must divide both of their orders, so it must be the trivial group: in other words, $H_1(X_2) \cong \langle y\rangle$.  In particular, we have $|b| = |H_1(X_2(\mu_2))| = |2n-1| \geq 3$.

Now $\pi_1(Y) \cong \pi_1(X_1) \ast_{\pi_1(T^2)} \pi_1(X_2)$ surjects onto the group
\[ \pi_1(X_1) \ast_{\pi_1(T^2)} H_1(X_2) \cong \pi_1(X_1) / \langle x \rangle \cong \pi_1(X_1(\gamma)), \]
where $\gamma \subset \partial X_1$ is the curve which is glued to $x \subset \partial X_2$.  We have $\Delta(\sigma_1,\gamma) = \Delta(\mu_2,x) = |b|\Delta(y,x) \geq 3$, so $X_1(\gamma)$ is the result of Dehn surgery on $T_{l,\pm(lm-1)} \subset S^3$ along a curve of distance $\delta = \Delta(\sigma_1,\gamma) > 1$ from the Seifert fiber.  Then $X_1(\gamma)$ is Seifert fibered with base orbifold $S^2(|l|,|lm-1|,\delta)$ \cite{moser}, so by \cite{sivek-zentner-menagerie} it is not $SU(2)$-cyclic.  Composing the surjection $\pi_1(Y) \to \pi_1(X_1(\gamma))$ with an irreducible $SU(2)$ representation of the latter proves that $Y$ is not $SU(2)$-cyclic either.
\end{proof}

Understanding the toroidal surgeries on the knots $k(l,m,0,p)$ requires a little more effort.  In this case, since $n=0$, we recall that $X_2$ is the complement of $T_{2,\pm(2m-1)}$.

\begin{lemma} \label{lem:lm0p-reducible}
Let $Y = X_1 \cup_{T^2} X_2$ be the half-integral, toroidal surgery on $k(l,m,0,p)$.  If $\rho: \pi_1(Y) \to SU(2)$ is irreducible, then $\rho|_{\pi_1(X_1)}$ has abelian image and $\rho|_{\pi_1(X_2)}$ is irreducible.  In this case we have $\rho(\mu_1) = \rho(\sigma_2) = -1$ and $\rho(\mu_2)=\rho(\sigma_1) \not\in \{\pm 1\}$.
\end{lemma}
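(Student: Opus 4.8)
The plan is to analyze $\rho$ through the amalgamated product $\pi_1(Y)=\pi_1(X_1)\ast_{\pi_1(T^2)}\pi_1(X_2)$, exploiting three structural facts: that $\sigma_i$ is central in $\pi_1(X_i)$, that the meridian $\mu_2$ normally generates the torus knot group $\pi_1(X_2)$ (since $X_2(\mu_2)=S^3$), and that the gluing identifies $\mu_1=\sigma_2$ and $\sigma_1=\mu_2$ in $\pi_1(Y)$. I would first record two elementary facts about $SU(2)$: a central element of a nonabelian subgroup must lie in $\{\pm 1\}$, and any $g\in SU(2)\ssm\{\pm1\}$ lies in a unique maximal torus, so any subgroup commuting with $g$ is abelian. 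Consequently, if $\rho|_{\pi_1(X_i)}$ is irreducible then $\rho(\sigma_i)=\pm1$, while if $\rho(\sigma_i)\neq\pm1$ then $\rho|_{\pi_1(X_i)}$ has abelian image.

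The heart of the argument, and the step I expect to be the main obstacle, is to show that an irreducible restriction $\rho|_{\pi_1(X_1)}$ forces $\rho(\mu_1)\notin\{\pm1\}$. For this I would use that $X_1$ is Seifert fibered over $D^2$ with two exceptional fibers of orders $\alpha_1,\alpha_2$, giving a presentation $\pi_1(X_1)=\langle x_1,x_2,h\mid h\text{ central},\,x_i^{\alpha_i}=h^{-\beta_i}\rangle$ with $h=\sigma_1$, whose boundary torus is generated by the fiber $h$ and the section boundary $q=x_1x_2$. Because $\Delta(\mu_1,\sigma_1)=1$, after possibly replacing $\mu_1$ by its inverse we may write $\mu_1=qh^t$ for some $t\in\Z$, so that $\rho(\mu_1)=\rho(x_1)\rho(x_2)\rho(h)^t$. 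If $\rho|_{\pi_1(X_1)}$ is irreducible then $\rho(h)=\pm1$, and in that case $\rho(\mu_1)\in\{\pm1\}$ would force $\rho(x_1)\rho(x_2)\in\{\pm1\}$, i.e.\ $\rho(x_2)=\pm\rho(x_1)^{-1}$, so $\rho(x_1)$ and $\rho(x_2)$ commute; this makes $\rho(\pi_1(X_1))=\langle\rho(x_1),\rho(x_2),\rho(h)\rangle$ abelian, a contradiction.

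With this in hand the rest is bookkeeping. I would first rule out $\rho(\mu_2)=\pm1$: since $\mu_2$ normally generates $\pi_1(X_2)$, this would force $\rho(\pi_1(X_2))\subseteq\{\pm1\}$, so that $\rho(\pi_1(Y))=\rho(\pi_1(X_1))$ is irreducible while $\rho(\mu_1)=\rho(\sigma_2)\in\{\pm1\}$, contradicting the previous paragraph. Hence $\rho(\mu_2)=\rho(\sigma_1)\notin\{\pm1\}$, which by the opening facts makes $\rho|_{\pi_1(X_1)}$ abelian. Then $\rho|_{\pi_1(X_2)}$ must be irreducible: otherwise both images would lie in maximal tori sharing the non-central element $\rho(\sigma_1)=\rho(\mu_2)$, forcing a single common torus and hence an abelian $\rho$.

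Finally I would pin down the sign. Irreducibility of $\rho|_{\pi_1(X_2)}$ gives $\rho(\sigma_2)=\pm1$, and since $X_2=T_{2,\pm(2m-1)}$ has group $\langle u,v\mid u^2=v^{\pm(2m-1)}\rangle$ with $\sigma_2=u^2$ central, irreducibility forces $\rho(u)\neq\pm1$. As $\rho(u)^2=\rho(\sigma_2)\in\{\pm1\}$ and no element of $SU(2)\ssm\{\pm1\}$ squares to $1$, we conclude $\rho(u)^2=-1$, so that $\rho(\mu_1)=\rho(\sigma_2)=-1$, while $\rho(\mu_2)=\rho(\sigma_1)\notin\{\pm1\}$ as already shown. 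The only genuinely delicate point is the second paragraph, where the Seifert presentation of $X_1$ together with $\Delta(\mu_1,\sigma_1)=1$ is what prevents an irreducible $X_1$-representation from collapsing $\mu_1$ into the center; everything else is a standard centralizer-and-normal-generation argument.
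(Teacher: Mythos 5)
Your proof is correct, and while it shares the paper's overall skeleton---the amalgamated product $\pi_1(X_1)\ast_{\pi_1(T^2)}\pi_1(X_2)$, centrality of the Seifert fibers, normal generation of $\pi_1(X_2)$ by $\mu_2$, and the uniqueness of the maximal torus through a non-central element---the two pivotal steps are carried out by genuinely different means. To rule out an irreducible $\rho|_{\pi_1(X_1)}$, the paper deduces $\rho(\mu_2)=\rho(\sigma_1)=\pm1$, pushes this through the homological identity $[\sigma_2]=2k[\mu_2]+[\lambda_2]$ to get $\rho(\mu_1)=\rho(\sigma_2)=+1$ exactly, and then invokes the fact that $X_1(\mu_1)$ is a lens space of order $|2p-1|$, so that $\rho|_{\pi_1(X_1)}$ would factor through a cyclic group; to rule out a reducible $\rho|_{\pi_1(X_2)}$ it shows $\rho$ factors through $\pi_1(X_1(\gamma))$ for the curve $\gamma$ glued to $\lambda_2$ and identifies $X_1(\gamma)$ as a lens space. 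You replace both of these topological inputs with a single algebraic observation: in the Seifert presentation of $\pi_1(X_1)$, the condition $\Delta(\mu_1,\sigma_1)=1$ lets you write $\mu_1=(x_1x_2)^{\pm1}h^{t}$, so an irreducible representation (which must send the central $h$ to $\pm1$) cannot also send $\mu_1$ to $\pm1$ without forcing $\rho(x_1)$ and $\rho(x_2)$ to commute and hence the whole image to be abelian. Once $\rho(\mu_2)=\rho(\sigma_1)\notin\{\pm1\}$ is established this way, the irreducibility of $\rho|_{\pi_1(X_2)}$ follows from the shared-maximal-torus argument rather than from any Dehn filling of $X_1$. Your route is more elementary and slightly more general, in that it never uses that $X_1(\mu_1)$ or $X_1(\gamma)$ is a lens space and so makes no reference to the parameter $p$, whereas the paper's version stays uniform with its surrounding discussion of the fillings $X_i(\mu_i)$ and $X_i(\sigma_i)$. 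The final determination $\rho(\sigma_2)=-1$ via the presentation $\langle u,v\mid u^2=v^{\pm(2m-1)}\rangle$ is identical in both arguments.
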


\begin{proof}
Let $\rho_i = \rho|_{\pi_1(X_i)}$ for $i=1,2$, and write $X_2 = S^3 \ssm T_{2,k}$ where $k=\pm(2m-1)$.  

Suppose first that $\rho_1$ is irreducible.  Since the Seifert fiber $\sigma_1$ is central in $\pi_1(X_1)$, its image $\rho_1(\sigma_1)$ must belong to the center $\{\pm 1\}$ of $SU(2)$.  In this case $\rho(\mu_2) = \rho(\sigma_1) = \pm 1$, and $\pi_1(X_2) = \pi_1(S^3 \ssm T_{2,k})$ is normally generated by the meridian $\mu_2$, so the image of $\rho_2$ is contained in $\{\pm 1\}$, meaning that it factors through $H_1(X_2) = \langle \mu_2 \rangle$.  The homology class of the Seifert fiber of $X_2$ is $[\sigma_2] = 2k[\mu_2] + [\lambda_2]$, where the longitude $\lambda_2$ is nullhomologous, so $\rho_2(\sigma_2) = \rho_2(\mu_2)^{2k} = 1$, and hence
\[ \rho_1(\mu_1) = \rho(\mu_1) = \rho(\sigma_2) = \rho_2(\sigma_2) = 1. \]
But then $\rho_1$ factors through the cyclic group $\pi_1(X_1) / \langle \mu_1 \rangle = \pi_1(X_1(\mu_1)) = \Z/(2p-1)\Z$, since $X_1(\mu_1)$ is a lens space of order $|2p-1|$, and we have a contradiction.

Now suppose instead that $\rho_2$ is reducible.  The longitude $\lambda_2$ satisfies $\Delta(\mu_2,\lambda_2)=1$, hence it is glued to a curve $\gamma \subset \partial X_1$ with $\Delta(\sigma_1,\gamma)=1$.  Since $\pi_1(X_2)$ is normally generated by $\mu_2$, the representation $\rho$ factors through
\[ \pi_1(X_1) \ast_{\pi_1(T^2)} H_1(X_2) = \pi_1(X_1) \ast_{\pi_1(T^2)} \langle \mu_2\rangle, \]
and since $\ker(H_1(\partial X_2) \to H_1(X_2))$ is generated by $\lambda_2$, we can identify this last group as $\pi_1(X_1) / \langle \gamma \rangle = \pi_1(X_1(\gamma))$.  Since $\gamma \neq \sigma_1$, the Seifert fibration on $X_1$ extends to $X_1(\gamma)$ by adding a singular fiber of order $\Delta(\sigma_1,\gamma)=1$ (see \cite{heil}); then $X_1(\gamma)$ has three singular fibers, one of which has order 1, and so it is a lens space.  But now we have shown that $\rho$ factors through a cyclic group, and this contradicts the assumption that it was irreducible.  So $\rho_2$ must be irreducible, and then $\rho_2(\mu_2) \not\in \{\pm 1\}$ follows immediately.

Finally, since $\rho_2$ is irreducible, it must send the central element $\sigma_2 \in \pi_1(X_2)$ to the center $\{\pm 1\}$ of $SU(2)$.  The fundamental group of $S^3 \ssm T(2,k)$ has a presentation
\[ \pi_1(X_2) = \langle x,y \mid x^2 = y^k \rangle, \]
in which $\sigma_2 = x^2 = y^k$.  If $\rho_2(\sigma_2) = 1$ then $\rho_2(x)^2 = 1$, which can only happen if $\rho_2(x) = \pm 1$.  But then $\rho_2$ would have abelian image, contradicting its irreducibility, and so we conclude that $\rho_2(\sigma_2) = -1$.
\end{proof}

\begin{proposition} \label{prop:lm0p-classification}
The half-integral, toroidal surgery on $k(l,m,0,p)$ is $SU(2)$-cyclic if and only if $2p-1$ divides $l$.
\end{proposition}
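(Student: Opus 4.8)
The plan is to combine the representation-theoretic reduction of Lemma~\ref{lem:lm0p-reducible} with a homological calculation on $X_1$ and the well-understood $SU(2)$ character variety of the torus knot exterior $X_2 = S^3\ssm T_{2,k}$, where $k = \pm(2m-1)$. By Lemma~\ref{lem:lm0p-reducible}, an irreducible $\rho\colon\pi_1(Y)\to SU(2)$ exists if and only if there is a value $\zeta\in SU(2)\ssm\{\pm1\}$ that is simultaneously $\rho_1(\sigma_1)$ for some abelian representation $\rho_1$ of $\pi_1(X_1)$ with $\rho_1(\mu_1)=-1$, and $\rho_2(\mu_2)$ for some irreducible representation $\rho_2$ of $\pi_1(X_2)$ with $\rho_2(\sigma_2)=-1$. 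Here I use that $\{\mu_1,\sigma_1\}$ and $\{\sigma_2,\mu_2\}$ are glued bases of the splicing torus (both pairs have distance one), that $\rho_2(\sigma_2)=-1$ is central, and that one may conjugate $\rho_2$ so that the two boundary restrictions agree on the nose. Writing $A$ for the set of achievable $\rho_1(\sigma_1)$ and $B$ for the set of achievable $\rho_2(\mu_2)$, viewed as conjugacy classes, we conclude that $Y$ is $SU(2)$-cyclic if and only if $A\cap B=\emptyset$; and since every element of $B$ is non-central, the condition $A\subseteq\{\pm1\}$ already forces this.

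The heart of the argument is to compute $A$. Since $X_1(\sigma_1) = L(\alpha_1,\ast)\# L(\alpha_2,\ast)$, the space $X_1$ is Seifert fibered over the disk with two exceptional fibers, of orders $\alpha_1 = |l|$ and $\alpha_2 = |(1-lm)(2p-1)+pl|$. An abelian $\rho_1$ factors through $H_1(X_1)$, and in a Seifert presentation $\langle x_1,x_2,h\mid h\ \text{central},\ x_i^{\alpha_i}h^{b_i}=1\rangle$ with $\sigma_1 = h$ and $\mu_1 = x_1 x_2 h^{c}$, the relations $\rho_1(x_i)^{\alpha_i}=\rho_1(h)^{-b_i}$ together with $\rho_1(\mu_1)=-1$ force $\zeta = \rho_1(\sigma_1)$ to satisfy $\zeta^{\,LE} = (-1)^{L}$, where $L = \lcm(\alpha_1,\alpha_2)$ and $E$ is the Euler number of the filled space $X_1(\mu_1)$. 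Using $|\alpha_1\alpha_2 E| = |H_1(X_1(\mu_1))| = |2p-1|$ and the identity $\alpha_1\alpha_2 E = \gcd(\alpha_1,\alpha_2)\cdot LE$, together with the congruence $\alpha_2 \equiv \pm(2p-1)\pmod{l}$ (so that $\gcd(\alpha_1,\alpha_2) = \gcd(|l|,2p-1)$), I obtain $|LE| = (2p-1)/\gcd(l,2p-1)=:M$. Thus $A = \{\zeta : \zeta^{M} = \pm 1\}$ is an evenly spaced set of $2M$-th roots of unity. Since $2p-1$ is odd, so is $M$, and $M=1$ precisely when $2p-1\mid l$; in that case $A\subseteq\{\pm1\}$ and $Y$ is $SU(2)$-cyclic by the previous paragraph, which proves one direction.

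For the converse, suppose $2p-1\nmid l$, so $M$ is odd with $M\ge 3$, and $A$ contains non-central roots of unity whose conjugacy-class angles are spaced $2\pi/M$ apart in $(0,\pi)$. On the torus-knot side, $\pi_1(X_2) = \langle x,y\mid x^2 = y^k\rangle$ with $k = |2m-1|\ge 3$, and the irreducible representations with $\rho_2(\sigma_2) = \rho_2(x^2) = -1$ form arcs indexed by the rotation angle $\pi a/k$ of $\rho_2(y)$ for odd $a$ with $0<a<k$; along each arc the meridian $\rho_2(\mu_2)$ varies continuously between the two values attained at the reducible endpoints, so its image contains the interval between them. Computing at the $a=1$ endpoints, where $\rho_2(x)=\mathrm{diag}(\pm i,\mp i)$ and $\rho_2(y)=\mathrm{diag}(e^{i\pi/k},e^{-i\pi/k})$, shows that this one arc already sweeps the meridian angle over all of $[\tfrac{\pi}{2k},\,\pi-\tfrac{\pi}{2k}]$, so $B$ omits only the two angular neighborhoods of radius $\tfrac{\pi}{2k}$ about $\pm 1$. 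The proof then finishes with a spacing argument: because the non-central points of $A$ are $2\pi/M$ apart and $M\ge 3,\ k\ge 3$, at least one of them lies in $[\tfrac{\pi}{2k},\,\pi-\tfrac{\pi}{2k}]\subseteq B$. This common value $\zeta$ produces matching $\rho_1$ and $\rho_2$, hence an irreducible $\rho\colon\pi_1(Y)\to SU(2)$, and so $Y$ is not $SU(2)$-cyclic.

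I expect the main obstacle to be the homological computation of $A$: pinning down the Seifert invariants $(\alpha_i,b_i)$ and the integer $c$ with consistent orientation conventions, and checking cleanly that the constraint on $\rho_1(\sigma_1)$ is governed by exactly $M = (2p-1)/\gcd(l,2p-1)$ rather than some coarser divisor. The key simplification making this tractable is that $\mu_1$ and $\sigma_1$ have distance one, so they form a basis of the peripheral torus and the two fillings $X_1(\mu_1)$, $X_1(\sigma_1)$ directly supply the homological data I need; the reduction to $\gcd(|l|,2p-1)$ then rests only on the elementary congruence for $\alpha_2$ modulo $l$. By comparison, the verification that $A$ meets $B$ is soft, amounting to the fact that the arcs of the torus-knot character variety cover all but a controlled neighborhood of the central elements.
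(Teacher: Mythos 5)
Your proposal is correct and follows essentially the same route as the paper: the reduction via Lemma~\ref{lem:lm0p-reducible}, the computation of the achievable values of $\rho_1(\sigma_1)$ from the Seifert presentation of $X_1$ together with $|H_1(X_1(\mu_1))|=|2p-1|$ and $\gcd(\alpha_1,\alpha_2)=\gcd(l,2p-1)$, and the matching against the interval $\bigl(\tfrac{\pi}{2|k|},\pi-\tfrac{\pi}{2|k|}\bigr)$ on the $T_{2,k}$ side. The only differences are cosmetic: the paper exhibits one explicit admissible angle in $[\tfrac{\pi}{3},\tfrac{2\pi}{3}]$ rather than describing all of $A$ and pigeonholing, and your $A$ should be the $M$-th roots of the single sign $(-1)^{L}$ (spacing $2\pi/M$, consistent with your later count) rather than all $\zeta$ with $\zeta^{M}=\pm1$ --- neither point affects the argument.
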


\begin{proof}
Writing the surgered manifold as $Y = X_1 \cup_{T^2} X_2$, we know from Lemma~\ref{lem:lm0p-reducible} that any irreducible representation $\rho: \pi_1(Y) \to SU(2)$ must restrict to a representation $\rho_1: \pi_1(X_1) \to SU(2)$ with abelian image, satisfying $\rho(\mu_1) = -1$ and $\rho(\sigma_1) \neq \pm 1$.  Since the image is abelian, $\rho_1$ factors through $H_1(X_1)$, and we can arrange up to conjugacy that its image lies in the 1-parameter subgroup $\{e^{it}\}$.

We first describe $H_1(X_1)$.  The Seifert fibration on $X_1$ comes with singular fibers of order $\alpha_1 = |{-}l|$ and $\alpha_2 = |(1-lm)(2p-1)+pl|$, which are determined as the orders of the lens space summands of $X_1(\sigma_1)$.  As we have already seen, the filling $X_1(\mu_1)$ is Seifert fibered over $S^2$ with an additional fiber of order $\Delta(\mu_1,\sigma_1) = 1$.  Following \cite{jankins-neumann}, if its Seifert invariants are $(\alpha_1,\beta_1)$, $(\alpha_2,\beta_2)$, and $(\alpha_3,\beta_3)=(1,k)$ for some integer $k$, then we have a presentation
\[ \pi_1(X_1(\mu_1)) = \langle c_1,c_2,c_3,h \mid [h,c_i]=c_i^{\alpha_i}h^{\beta_i}=1\,\,\forall i, c_1c_2c_3=1 \rangle, \]
from which $H_1(X_1(\mu_1)) \cong \Z/(2p-1)\Z$ is generated by classes $[c_1],[c_2],[c_3],[h]$ satisfying several relations.  We use the relation $[c_3]+k[h]=0$ to eliminate $[c_3]$, and then $H_1(X_1(\mu_1)) = \Z/(2p-1)\Z$ is presented by the matrix
\[ M = \begin{pmatrix} \alpha_1 & 0 & \beta_1 \\ 0 & \alpha_2 & \beta_2 \\ -1 & -1 & k \end{pmatrix} \]
corresponding to generators $[c_1]$, $[c_2]$, and $[h] = [\sigma_1]$.  Thus $|{\det}(M)| = |2p-1|$.

Now, we determine the possible values of $\rho_1$ on the generators of $H_1(X_1)$.  This amounts to finding constants $\theta_1,\theta_2,\varphi \in \R/2\pi\Z$ such that
\[ \rho_1(c_j) = e^{i\theta_j}, \qquad \rho_1(\sigma_1) = e^{i\varphi}. \]
Aside from $h$ being central, the only relations in $\pi_1(X_1)$ are $c_1^{\alpha_1} h^{\beta_1} = c_2^{\alpha_2} h^{\beta_2} = 1$, which are equivalent here to $\alpha_j\theta_j + \beta_j\varphi \equiv 0 \pmod{2\pi}$ for $j=1,2$.
From the relation $\mu_1 = (c_1c_2)^{-1}h^k$ in $\pi_1(X_1)$, we have
\[ \rho_1(\mu_1) = e^{i(-\theta_1-\theta_2+k\varphi)}, \]
so $\rho_1(\mu_1)=-1$ imposes the condition $-\theta_1-\theta_2+k\varphi \equiv \pi \pmod{2\pi}$.  Thus $\theta_1,\theta_2,\varphi$ define a representation $H_1(X_1) \to SU(2)$ if and only if for some integers $v_1,v_2,w$, we have
\[ \begin{pmatrix} \alpha_1 & 0 & \beta_1 \\ 0 & \alpha_2 & \beta_2 \\ -1 & -1 & k \end{pmatrix} \begin{pmatrix} \theta_1 \\ \theta_2 \\ \varphi \end{pmatrix} = \begin{pmatrix} 2\pi v_1 \\ 2\pi v_2 \\ \pi + 2\pi w\end{pmatrix}, \]
or equivalently if and only if 
\[ \begin{pmatrix} \theta_1 \\ \theta_2 \\ \varphi \end{pmatrix} = \frac{1}{\det(M)} \begin{pmatrix} k\alpha_2+\beta_2 & -\beta_1 & -\beta_1\alpha_2 \\ -\beta_2 & \alpha_1k + \beta_1 & -\alpha_1\beta_2 \\ \alpha_2 & \alpha_1 & \alpha_1\alpha_2 \end{pmatrix} \begin{pmatrix} 2\pi v_1 \\ 2\pi v_2 \\ \pi + 2\pi w\end{pmatrix}, \]
in which case $\varphi = \frac{\pi}{\det(M)}\left(2\alpha_2v_1 + 2\alpha_1v_2 + (1+2w)\alpha_1\alpha_2\right)$.

Suppose that $2p-1$ divides $l$.  Then $\alpha_1,\alpha_2,v_1,v_2,w$ are all integers, and $\det(M) = \pm(2p-1)$ divides both $\alpha_1 = |{-}l|$ and $\alpha_2 = |(1-lm)(2p-1)+pl|$, so $\varphi$ is an integral multiple of $\pi$.  But then $\rho_1(\sigma_1) = \pm 1$, and this is impossible if $\rho_1$ is the restriction of an irreducible representation $\pi_1(Y) \to SU(2)$.  We conclude that $Y$ is $SU(2)$-cyclic whenever $2p-1$ divides $l$.

On the other hand, if $2p-1$ does not divide $l$ then we can take
\[ (v_1,v_2,w) = (0,\tfrac{1}{2}(q-\alpha_2),0) \]
for some integer $q \equiv \alpha_2 \pmod{2}$, and this gives a solution with $\varphi = \frac{\pi \alpha_1q}{\det(M)}$.  Write
\[ A = \frac{\alpha_1}{\gcd(l,2p-1)} \qquad\mathrm{and}\qquad D = \frac{\det(M)}{\gcd(l,2p-1)}; \]
then $A$ and $D$ are relatively prime, with $D$ odd since it divides $2p-1$ and $|D| \neq 1$ since $\gcd(l,2p-1) \neq \pm(2p-1)$.  Since $A$ is invertible mod $D$ and $\frac{D+1}{2}$ is an integer, we can take $q \equiv \left(\frac{D+1}{2}\right)A^{-1} \pmod{D}$: there are two such integers between $1$ and $2|D|$ inclusive, and they differ by $D$, so we take whichever value has the same parity as $\alpha_2$.  Then
\[ \varphi = \frac{\pi Aq}{D} \equiv \pi\left(\frac{1}{2} + \frac{1}{2D}\right) \pmod{\pi}, \]
so $\varphi$ is not an integer multiple of $\pi$.  In fact, since $|D| \geq 3$, we have $\frac{\pi}{3} \leq \varphi \leq \frac{2\pi}{3}$ modulo $\pi$.

The abelian representation $\rho_1: \pi_1(X_1) \to SU(2)$ which we have just constructed extends to $\rho: \pi_1(Y) \to SU(2)$ if and only if there is an irreducible $\rho_2: \pi_2(X_2) \to SU(2)$ which agrees with $\rho_1$ on their boundary torus: this means that
\[ \rho_2(\mu_2) = \rho_1(\sigma_1) = e^{i\varphi} \qquad\mathrm{and}\qquad \rho_2(\sigma_2) = \rho_1(\mu_1) = -1. \]
Since $X_2$ is the complement of the $(2,k)$-torus knot where $k = \pm(2m-1)$, a straightforward exercise shows this is possible whenever $\frac{\pi}{2|k|} < \varphi < \pi - \frac{\pi}{2|k|} \pmod{\pi}$.  But $n=0$ implies that $m\neq 0,1$, so $|k| \geq 3$, and then the range $\frac{\pi}{2|k|} < \varphi < \pi-\frac{\pi}{2|k|}$ contains the open interval $\left(\frac{\pi}{6}, \frac{5\pi}{6}\right)$ and hence the closed interval $[\frac{\pi}{3},\frac{2\pi}{3}]$ in which our actual value of $\varphi$ lies.  This means that $\rho_1|_{\partial X_1}$ does extend to an irreducible representation $\rho_2$ on $X_2$, and hence $\rho_1$ extends to an irreducible $\rho$ on all of $Y$, as desired.
\end{proof}

\begin{proposition} \label{prop:su2-cyclic-but-not-splicing}
Let $K = k(l,m,0,p)$, where $2p-1$ divides $l$, and let $r$ be the slope of the half-integral, toroidal surgery on $K$.  If $p$ is neither 0 nor 1, then $Y = S^3_r(K)$ is not one of the manifolds $Y(T_{a,b},T_{c,d})$.
\end{proposition}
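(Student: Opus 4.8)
The plan is to argue by contradiction using the uniqueness of the incompressible torus. Suppose that $Y = S^3_r(K)$ were homeomorphic to some $Y(T_{a,b},T_{c,d})$, via a homeomorphism $f$. Writing $Y = X_1 \cup_{T^2} X_2$ as in the geometric description above, the separating torus $T^2$ is incompressible, so $f(T^2)$ is an incompressible torus in $Y(T_{a,b},T_{c,d})$. By Lemma~\ref{lem:splicing-tori} this manifold contains a unique incompressible torus up to isotopy, namely $\partial E_{a,b} = \partial E_{c,d}$, so $f(T^2)$ is isotopic to it. Cutting along isotopic incompressible tori yields homeomorphic complementary pieces, and hence $\{X_1,X_2\}$ agrees with $\{E_{a,b},E_{c,d}\}$; in particular each of $X_1$ and $X_2$ is the exterior of a torus knot. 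It therefore suffices to show that $X_1$ is \emph{not} a torus knot exterior when $p \neq 0,1$, and this is an intrinsic property of $X_1$ that does not reference the gluing.

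To obstruct this, I would read off the orders of the two lens space summands of the reducible filling $X_1(\sigma_1)$, which from the surgery description are $\alpha_1 = |{-}l|$ and $\alpha_2 = |(1-lm)(2p-1)+pl|$, where $\sigma_1$ is the Seifert fiber of $X_1$. If $X_1$ were the exterior of a torus knot $T_{a',b'}$, then filling along its Seifert fiber would produce the unique reducible surgery $L(a',b') \# L(b',a')$, whose two summands have \emph{coprime} orders $|a'|$ and $|b'|$. So it is enough to verify that $\gcd(\alpha_1,\alpha_2) \neq 1$.

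The key step is a short congruence computation: expanding $\alpha_2 = (2p-1) + l\bigl(p - m(2p-1)\bigr)$ shows that $\alpha_2 \equiv 2p-1 \pmod{l}$, so $\gcd(\alpha_1,\alpha_2) = \gcd(l,\,2p-1)$. Since we assume $2p-1 \mid l$, this gcd equals $|2p-1|$, and the hypothesis $p \notin \{0,1\}$ forces $|2p-1| \geq 3$. Thus the two summand orders share a common factor of at least $3$ and cannot be coprime, contradicting the conclusion that $X_1$ is a torus knot exterior. (The degenerate possibility $\alpha_2 = 0$ is even easier, since a torus knot has a reducible surgery with no $S^1 \times S^2$ summand.)

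I expect the main subtlety to lie in the first paragraph rather than the arithmetic: one must be careful that a homeomorphism $Y \cong Y(T_{a,b},T_{c,d})$ genuinely forces the two JSJ pieces to match, which rests on Lemma~\ref{lem:splicing-tori} together with the incompressibility of $T^2$ in $X_1 \cup X_2$. Once the pieces are identified, the obstruction is the self-contained gcd computation above, so the heart of the proof is recognizing that the reducible filling detects the exceptional fiber multiplicities of $X_1$ and that these fail to be coprime exactly when $2p-1$ is a nontrivial divisor of $l$.
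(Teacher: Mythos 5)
Your proposal is correct and follows essentially the same route as the paper: both use Lemma~\ref{lem:splicing-tori} to identify $X_1$ with a torus knot exterior and then observe that the reducible Seifert-fiber filling $X_1(\sigma_1)$ has lens space summands of orders $|{-}l|$ and $|(1-lm)(2p-1)+pl|$, which are both multiples of $2p-1$ (hence not coprime when $p \neq 0,1$), contradicting the coprimality of the summand orders for a torus knot. Your explicit gcd computation $\gcd(\alpha_1,\alpha_2) = \gcd(l,2p-1)$ is a slightly more detailed version of the paper's one-line observation, but the argument is the same.
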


\begin{proof}
We decompose $Y = X_1 \cup_{T^2} X_2$ along an incompressible torus as usual.  If $Y \cong Y(T_{a,b},T_{c,d})$ then this torus is the unique closed, orientable, incompressible surface in $Y$ by Lemma~\ref{lem:splicing-tori}, so $X_1$ and $X_2$ must be the exteriors of $T_{a,b}$ and $T_{c,d}$ in some order.  We write $X_1 = S^3 \ssm N(T_{a,b})$ without loss of generality, and then its Dehn filling along a Seifert fiber is a connected sum of two lens spaces, whose orders $|a|$ and $|b|$ are relatively prime.  On the other hand, we have seen that $X_1(\sigma_1)$ is actually a sum of lens spaces of orders
\[ |{-}l| \quad \mathrm{and} \quad |(1-lm)(2p-1)+pl|, \]
and by hypothesis these are both multiples of $2p-1$, so they can only be relatively prime if $2p-1 = \pm 1$, or equivalently if $p$ is either $0$ or $1$.
\end{proof}

\begin{example} \label{ex:p-nonzero}
The half-integral, toroidal surgeries on Eudave-Mu\~noz knots $k(l,m,0,p)$, where $2p-1$ divides $l$ and is not equal to $\pm1$, are $SU(2)$-cyclic but not of the form $Y(T_{a,b},T_{c,d})$ by Propositions~\ref{prop:lm0p-classification} and \ref{prop:su2-cyclic-but-not-splicing}.  Thus if we let $K_q$ be the mirror of $k(2q+1,-1,0,-q)$ for any $q \geq 1$, then some straightforward manipulation of the rightmost portion of \cite[Figure~2]{ni-zhang} shows that $K_q$ is a twisted torus knot, namely
\[ K_q = T(6q+4,6q^2+6q+1,2q+2,2), \]
which by definition is the closure of the positive braid
\[ (\sigma_{6q+3}\sigma_{6q+2}\dots\sigma_2\sigma_1)^{6q^2+6q+1}(\sigma_{2q+1}\sigma_{2q}\dots\sigma_2\sigma_1)^2 \in B_{6q+4}. \]
The half-integral, toroidal, $SU(2)$-cyclic surgery on $K_q$ has slope
\[ r_q = \tfrac{1}{2}(72q^3+120q^2+68q+13) = \frac{2q+1}{2}(36q^2+42q+13) \]
according to Equation~\eqref{eq:r-value}.
\end{example}

\section{$SU(2)$-cyclic surgeries on iterated cables} \label{sec:cables}

In this section we use results from \cite[\S 10]{sivek-zentner} and \cite{sivek-zentner-menagerie} to completely determine the set of $SU(2)$-cyclic surgeries on iterated cables of torus knots.  We will use the following conventions for cables and lens spaces.  If $K \subset S^3$ is a knot, with meridian $\mu$ and longitude $\lambda$ in $\partial N(K)$, then the $(m,n)$-cable $C_{m,n}(K)$ is an embedded curve in $\partial N(K)$ in the homology class $m[\mu]+n[\lambda]$; we always assume that $n \geq 2$, since the $(m,1)$-cable is isotopic to $K$.  We also write $L(p,q)$ for the lens space obtained by $\frac{p}{q}$-surgery on the unknot in $S^3$.

\begin{theorem} \label{thm:iterated-cables}
Let $K = C_{p_n,q_n}(C_{p_{n-1},q_{n-1}}(\dots(C_{p_2,q_2}(T_{p_1,q_1}))\dots))$ be an iterated torus knot, with $\gcd(p_i,q_i)=1$ and $q_i \geq 2$ for all $i$, and $|p_1| \neq 1$.  If some nontrivial $r$-surgery on $K$ is $SU(2)$-cyclic, then $K$, $r$, and $S^3_r(K)$ are among the following:
\begin{align*}
K &= T_{p,q}: & r&=pq+\tfrac{1}{m}\ (m \neq 0), & S^3_r(K) &= L(mpq+1,mq^2) \\
K &= T_{p,2\epsilon}: & r&=2\epsilon p, & S^3_r(K) &= L(p,2\epsilon) \# \RP^3 \\
K &= C_{2pq+\epsilon,2}(T_{p,q}): & r &= 4pq+\epsilon, & S^3_r(K) &= L(4pq+\epsilon,4q^2) \\
&& \mathrm{or\ } r &= 4pq+2\epsilon, & S^3_r(K) &= L(2pq+\epsilon,2q^2) \# \RP^3.
\end{align*}
Here $\epsilon$ can be either $+1$ or $-1$.  In particular, if $n \geq 3$ then $K$ has no nontrivial $SU(2)$-cyclic surgeries.
\end{theorem}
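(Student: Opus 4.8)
The plan is to classify $SU(2)$-cyclic surgeries on iterated torus knots by induction on the number of cabling operations $n$, reducing everything to the known base case of torus knots ($n=1$) from \cite{sivek-zentner-menagerie} together with the Seifert-fibered structure of cable-space complements. The key structural fact is that if $K' = C_{p,q}(K)$ is a nontrivial cable, then its exterior $E_{K'}$ decomposes along the cabling torus $T$ into the exterior $E_K$ of the companion $K$ and a cable space $C = E_{K'} \setminus E_K$, where $C$ is Seifert fibered over an annulus with one singular fiber. After an $r$-surgery on $K'$, the torus $T$ either remains incompressible—in which case $S^3_r(K')$ contains an essential torus and we can analyze the two pieces using the gluing—or it compresses, which by the theory of cable-space fillings \cite{gordon} happens precisely for the cabling slope $r = pq$, yielding a reducible manifold $S^3_{pq}(C_{p,q}(K)) \cong S^3_{p/q}(K) \# L(q,p)$.

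\emph{The main reduction.} For a surgery to be $SU(2)$-cyclic, I would show it must either reduce to a surgery on the companion knot of strictly smaller complexity, or else produce a lens space. First I would handle the reducible/exceptional slopes: the cabling slope $r=pq$ gives a connected summand $L(q,p)$, and for this to be $SU(2)$-cyclic one needs $S^3_{p/q}(K)$ to be $SU(2)$-cyclic as well (a connected sum is $SU(2)$-cyclic iff each summand is, since an irreducible $SU(2)$ representation of a free product factor extends), which by induction forces $K$ to be a torus knot and pins down the data appearing in the $C_{2pq+\epsilon,2}(T_{p,q})$ line of the table, with the $\RP^3 = L(2,1)$ summand arising exactly when $q=2$. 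For the remaining slopes where $T$ stays incompressible, I would use that $S^3_r(C_{p,q}(K)) = E_K \cup_T (\text{filled cable space})$, observe that the filled cable space is itself Seifert fibered, and invoke the same $U(1)$-coincidence argument as in Lemma~\ref{lem:splicing-cyclic}: a representation nontrivial on the cabling torus is forced into a single $U(1)$ unless one of the two pieces already admits an irreducible representation compatible with the gluing. Showing that such compatible irreducibles exist—thereby obstructing $SU(2)$-cyclicity—whenever $K$ is genuinely iterated is the crux.

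\emph{The inductive obstruction for $n \geq 2$.} The heart of the argument is that once $K$ is a nontrivial cable (so $n \geq 2$ in the companion), the companion exterior $E_K$ and the filled cable space each carry enough representation-theoretic flexibility that one can always arrange an irreducible $\rho$ with noncyclic image, \emph{except} at the finitely many slopes recorded in the table. Concretely, I would parametrize abelian representations of $\pi_1(E_K)$ restricted to $\partial E_K$ by an angle $\varphi$ as in the proof of Proposition~\ref{prop:lm0p-classification}, compute the constraint that the cabling relation $\sigma = \mu^{pq}\lambda$ imposes, and show that the admissible range of $\varphi$ is a full open interval rather than a discrete set whenever the cable space contributes a singular fiber of order $q \geq 2$. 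This is exactly where iterated cabling ($n\geq 3$) is killed: the companion of $C_{p_n,q_n}(\cdots)$ is itself a nontrivial cable, so its exterior contains an incompressible torus and by Lemma~\ref{lem:splicing-tori}-type uniqueness cannot be a lens space or $SU(2)$-cyclic Seifert piece, giving an irreducible representation to extend.

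\emph{The hard part.} The main obstacle will be the bookkeeping in the incompressible case: I must verify that the gluing map between $\partial E_K$ and the boundary of the filled cable space genuinely allows an irreducible representation to be assembled from the abelian piece on one side and an irreducible piece on the other, which requires checking that the image of the cabling longitude under an abelian $\rho$ lands in the interior of the interval of traces realized by irreducible $SU(2)$ representations of the Seifert piece on the other side. This is the same mechanism as the trace-interval computation at the end of Proposition~\ref{prop:lm0p-classification}, but here the intervals depend on the cabling parameters $(p_i,q_i)$ rather than on a single torus-knot exponent, so the analysis must be uniform in the iterated data. Once the interval containment is established, the existence of noncyclic representations follows, and the only surviving $SU(2)$-cyclic surgeries are those at the exceptional reducible or lens-space slopes enumerated in the statement; in particular no genuinely triple-iterated ($n \geq 3$) knot survives.
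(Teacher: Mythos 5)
Your overall strategy (induct on the number of cablings, decompose the exterior along the cabling torus, and split into compressible vs.\ incompressible cases) is workable in spirit, but the proposal as written has several concrete gaps, and it also misses the mechanism the paper actually uses, which bypasses most of the gluing analysis.

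First, your dichotomy is wrong: the cabling torus does not compress ``precisely for the cabling slope $r=pq$.'' By \cite[Corollary~7.3]{gordon}, filling the cable space along any slope $r$ with $\Delta(r,pq)=1$ yields a solid torus, so $S^3_r(C_{p,q}(K)) \cong S^3_{r/q^2}(K)$ and the torus compresses for the whole family $r = pq+\frac{1}{b}$. These distance-one slopes are exactly where the lens-space surgeries $r=4pq+\epsilon$ in the third line of the table live; your case analysis has no slot for them. Second, your parenthetical ``a connected sum is $SU(2)$-cyclic iff each summand is'' is false in the direction you need: $L(a,\ast)\#L(b,\ast)$ with $a,b\geq 3$ has both summands $SU(2)$-cyclic but $\Z/a \ast \Z/b$ admits irreducible $SU(2)$ representations (send the generators to non-commuting elements $e^{2\pi i/a}$ and $e^{2\pi j/b}$). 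This is precisely the observation that forces the $\RP^3$ summand, i.e.\ $q=2$, in the reducible case; with your stated ``iff'' nothing forces $q=2$ and your table would acquire spurious entries. Third, the crux you defer --- producing an irreducible representation of the glued manifold when the torus stays incompressible --- is genuinely the hard part and is not addressed; in particular the case $p_2=\pm1$ cannot be reduced to a companion torus knot (since $C_{\pm1,n}(U)$ is unknotted), and the paper has to handle it with an explicit arc of irreducible representations (Lemma~\ref{lem:1-n-arc}) together with the quantitative pillowcase estimate of \cite[Proposition~3.1]{sivek-zentner} to bound the slope. Your ``trace-interval'' sketch does not explain how the finitely many surviving slopes would emerge.

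For contrast, the paper's route is mostly arithmetic rather than representation-gluing: Propositions~\ref{prop:sz102} and \ref{prop:sz103} say that if $r$-surgery on $C_{m,n}(K)$ is $SU(2)$-cyclic then so are $r$-surgery on $T_{m,n}=C_{m,n}(U)$ and $\frac{r}{n^2}$-surgery on $K$. Combined with the torus-knot classification (Proposition~\ref{prop:tpq-slopes}), these two constraints on $r$ are already incompatible except for the slopes in the table when $|m|\geq 2$, and for $n\geq 3$ they force $r\equiv 0 \pmod 4$ and $r\equiv \pm1,\pm2 \pmod 4$ simultaneously, a contradiction. If you want to pursue your more geometric route you would need to repair the three points above, and in particular supply the analogue of Lemma~\ref{lem:1-n-arc} for the incompressible case.
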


We remark that the $(2pq+\epsilon,2)$-cables of $T_{p,q}$ have certainly appeared in the literature before.  Notably, the $4pq$-surgeries on these cables belong to a short list of toroidal Seifert fibered surgeries on non-hyperbolic knots \cite{miyazaki-motegi}, having base $\RP^2$ and two singular fibers.  Also, Greene \cite{greene} classified the Dehn surgeries which produce a connected sum of lens spaces, and the examples in Theorem~\ref{thm:iterated-cables} are precisely those where one summand is $\RP^3$.

We break the proof of Theorem~\ref{thm:iterated-cables} into several cases.  The case $n=1$, in which $K$ is a torus knot, is Proposition~\ref{prop:tpq-slopes}.  For $n=2$, with $K = C_{p_2,q_2}(T_{p,q})$, the case $|p_2| \geq 2$ is proved in Proposition~\ref{prop:m-not-1} as an easy corollary of some results from \cite{sivek-zentner}, and the case $p_2 = \pm 1$ is addressed separately in Proposition~\ref{prop:m-1}.  We then complete the proof by showing inductively that there are no $SU(2)$-cyclic surgeries for $n \geq 3$.

For the case $n=1$, we use the following input from \cite{sivek-zentner-menagerie}.  A manifold is said to be \emph{$SU(2)$-abelian} if every representation $\pi_1(Y) \to SU(2)$ has abelian image; this is equivalent to being $SU(2)$-cyclic for rational homology spheres but weaker in general.

\begin{theorem}[\cite{sivek-zentner-menagerie}] \label{thm:small-seifert-fibered}
Let $Y$ be a small Seifert fibered 3-manifold, with base $S^2$ and at most three singular fibers, which is not a lens space.  Then $Y$ is $SU(2)$-abelian if and only if either
\begin{itemize}
\item the base orbifold is $S^2(2,4,4)$, or
\item the base orbifold is $S^2(3,3,3)$ and $|H_1(Y;\Z)|$ is even or infinite.
\end{itemize}
\end{theorem}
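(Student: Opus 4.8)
The plan is to reduce the question to a concrete problem about triples in $SU(2)$ and then to dispose of the spherical, Euclidean, and hyperbolic base orbifolds in turn, the Euclidean case being the crux. We may assume there are exactly three exceptional fibers, since otherwise $Y$ is a lens space. Following \cite{jankins-neumann}, write $\pi_1(Y)$ with generators $c_1,c_2,c_3,h$, where $h$ is the regular fiber and hence central, subject to $[h,c_i]=1$, $c_i^{\alpha_i}h^{\beta_i}=1$, and $c_1c_2c_3=h^{-b}$; quotienting by $\langle h\rangle$ realizes $\pi_1(Y)$ as a central extension of the triangle group $\Delta(\alpha_1,\alpha_2,\alpha_3)$. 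Given an irreducible $\rho:\pi_1(Y)\to SU(2)$, the central $h$ must map to $\epsilon:=\rho(h)\in\{\pm1\}$, so the problem becomes: for some sign $\epsilon$, do there exist $A_1,A_2,A_3\in SU(2)$, not all commuting, with $A_i^{\alpha_i}=\epsilon^{\beta_i}$ and $A_1A_2A_3=\epsilon^{-b}$? Writing each non-central $A_i$ with eigenvalue-angle $a_i\in(0,\pi)$, the first relation forces $a_i=\pi m_i/\alpha_i$ for an integer $m_i$ whose parity is pinned down by $(-1)^{m_i}=\epsilon^{\beta_i}$, and the second forces the angle of $A_1A_2$ to equal $a_3$ or $\pi-a_3$ according to the sign $\epsilon^{-b}$.

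First I would record the standard criterion for such a triple: as $A_1,A_2$ range over the conjugacy classes of angles $a_1,a_2$, the attainable angles of $A_1A_2$ fill the interval $[\,|a_1-a_2|,\ \min(a_1+a_2,\,2\pi-a_1-a_2)\,]$, and the representation is non-abelian precisely when the target angle lies in the \emph{open} interval, equality forcing coplanar axes and hence reducibility. Since a central $A_i$ makes the other two mutually inverse up to sign and hence commuting, every irreducible triple must have all three $A_i$ non-central. Thus $Y$ fails to be $SU(2)$-cyclic if and only if, for some $\epsilon=\pm1$ and some admissible parities $m_1,m_2,m_3$, the appropriate target angle lies strictly inside this interval — a purely arithmetic condition in the $\alpha_i,\beta_i,b$.

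Then I would run the trichotomy governed by $\chi^{\mathrm{orb}}=-1+\sum 1/\alpha_i$. In the hyperbolic range $\sum 1/\alpha_i<1$ the admissible angles are spread widely enough that the open interval always captures an admissible target of the correct parity for a suitable $\epsilon$; I expect this to follow from an elementary estimate comparing the interval length with the spacing of admissible angles. In the spherical range $\sum 1/\alpha_i>1$ the base is one of $(2,2,n),(2,3,3),(2,3,4),(2,3,5)$, and I would verify by a short finite check — using the freedom in $\epsilon$ — that a strict interior solution exists unless $\pi_1(Y)$ is cyclic, i.e. unless $Y$ is a lens space; conceptually, for a suitable $\epsilon$ the representation factors through a binary dihedral or polyhedral group, which sits inside $SU(2)$ and so carries an irreducible representation. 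The delicate case is the Euclidean one, $\{(2,3,6),(2,4,4),(3,3,3)\}$, where the intervals degenerate and the parity constraints become decisive.

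The heart of the argument, and the main obstacle, is this Euclidean trichotomy, which I would organize around the order $d\in\{6,4,3\}$ of the point group of $\Delta$ acting on its translation lattice $L\cong\Z^2$. For $\epsilon=1$ the representation factors through $\Delta$; placing $\rho(L)$ inside a maximal torus $U(1)$ and using that $SU(2)$-conjugation induces only $\pm1$ on $U(1)$, the order-$d$ generator forces $\rho\circ R=\rho^{s}$ on $L$ with a fixed sign $s=\pm1$, and comparing with the fact that $-I=R^{d/2}$ when $d$ is even pins down $\rho(L)$. Because $-I$ occurs as the square $R^2$ for $(2,4,4)$ but as the cube $R^3$ for $(2,3,6)$, and $s^2=1$ while $s^3=s$ may be $-1$, the case $(2,4,4)$ always collapses to an abelian image while $(2,3,6)$ admits the inversion $\rho\circ R=\rho^{-1}$ with $\rho(L)\cong\Z/3$, yielding a genuinely non-abelian image; an analogous argument with the lifted lattice shows $(2,4,4)$ stays abelian for $\epsilon=-1$ as well. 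The case $(3,3,3)$ is abelian for $\epsilon=1$, so its irreducibles can only arise for $\epsilon=-1$, where $\rho$ factors through a $\Z/2$-central extension of $\Delta(3,3,3)$; whether the lifted lattice supports a non-abelian $SU(2)$ image then comes down to the solvability of the parity system $(-1)^{m_i}=\epsilon^{\beta_i}$ together with $A_1A_2A_3=\epsilon^{-b}$. I expect this to reduce exactly to the parity of $b+\beta_1+\beta_2+\beta_3$, which is the parity of $|H_1(Y;\Z)|$, with the case $e=0$ (equivalently $|H_1|$ infinite) handled separately and shown to force abelian image. Establishing this final equivalence — that an irreducible lift exists iff $|H_1(Y;\Z)|$ is odd and finite — is where I anticipate the real work, and it is precisely what singles out $(3,3,3)$ in the statement.
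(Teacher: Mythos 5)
This theorem is not proved in the present paper at all: it is quoted from \cite{sivek-zentner-menagerie}, so there is no in-paper argument to compare against, and the relevant benchmark is the cited source, whose proof runs along essentially the lines of your first two paragraphs. That part of your proposal is correct and is the right reduction: $\rho(h)=\epsilon\in\{\pm1\}$, angles $a_i=\pi m_i/\alpha_i$ with the parity constraint $(-1)^{m_i}=\epsilon^{\beta_i}$, the product-angle interval $\left[\,|a_1-a_2|,\ \min(a_1+a_2,\,2\pi-a_1-a_2)\,\right]$ with irreducibility exactly at interior points (the endpoints force equal or opposite axes, i.e.\ commuting elements --- ``coplanar'' is not quite the right word), and then a finite arithmetic check in each geometry. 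Your anticipated $(3,3,3)$ criterion is also the correct one: with $\epsilon=-1$ the parities force $a_i=\pi/3$ or $2\pi/3$ according to the parity of $\beta_i$, the target is $a_3$ or $\pi-a_3$ according to the parity of $b$, and a four-case check shows an interior solution exists if and only if $b+\beta_1+\beta_2+\beta_3$ is odd, which is the parity of $|H_1(Y;\Z)|=9\,|3b+\beta_1+\beta_2+\beta_3|$; moreover $e=0$ forces this sum to be even, which handles the infinite-homology case. So the skeleton closes, even though you leave this ``real work'' and the hyperbolic spacing estimate as expectations rather than carrying them out.

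The genuine gap is your third paragraph, where the Euclidean trichotomy is organized around $\epsilon=+1$ representations factoring through $\Delta$: that mechanism is wrong, and it misplaces where the $(2,3,6)$ irreducibles actually come from. Whenever some $\alpha_i=2$, the relation $c_i^{\alpha_i}h^{\beta_i}=1$ with $\gcd(\beta_i,2)=1$ gives $\rho(c_i)^2=\epsilon$, so for $\epsilon=+1$ we get $\rho(c_i)=\pm1$ and (by your own observation about central $A_i$) an abelian image; in particular $\Delta(2,3,6)$ and $\Delta(2,4,4)$ have \emph{no} irreducible $SU(2)$ representations whatsoever, and your claimed inversion $\rho\circ R=\rho^{-1}$ with $\rho(L)\cong\Z/3$ cannot produce a non-abelian image at $\epsilon=1$ (the dihedral picture you have in mind lives in $SO(3)$ and does not lift to a representation of $\Delta$ itself). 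All irreducibles over $S^2(2,3,6)$ occur at $\epsilon=-1$, where $m_1=1$ and $m_3$ odd still permit $a_3=\pi/2$ (take $m_3=3$), which lies in the open interval $\left(\tfrac{\pi}{6},\tfrac{5\pi}{6}\right)$ for either admissible $a_2\in\{\pi/3,2\pi/3\}$ and is fixed by $\theta\mapsto\pi-\theta$, so the sign of $\epsilon^{-b}$ is irrelevant. Dually, the decisive $(2,4,4)$ computation is also at $\epsilon=-1$: since every $\beta_i$ is odd (being coprime to an even $\alpha_i$), all $m_i$ are forced odd, so $a_1=\pi/2$ and $a_2,a_3\in\{\pi/4,3\pi/4\}$, and the target lands precisely on an endpoint of the interval $\left(\tfrac{\pi}{4},\tfrac{3\pi}{4}\right)$, giving only reducibles --- this is exactly the step you defer to an unexecuted ``lifted lattice'' analogue. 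The fix is simply to delete the lattice/point-group narrative and run the three finite angle-parity checks inside the framework you already set up; as written, your account of why $(2,3,6)$ is never $SU(2)$-cyclic is incorrect, and your account of why $(2,4,4)$ always is remains missing at the only value of $\epsilon$ where anything can happen.
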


\begin{proposition} \label{prop:tpq-slopes}
The nontrivial $SU(2)$-cyclic surgery slopes for the $(p,q)$-torus knot $T_{p,q}$ are precisely $pq + \frac{1}{n}$ for any nonzero integer $n$, as well as $pq$ if either $p$ or $q$ is $\pm 2$.  These surgeries are lens spaces (for $pq+\frac{1}{n}$) or a connected sum of $\RP^3$ and a lens space (for $pq$, if either $p$ or $q$ is $\pm 2$).
\end{proposition}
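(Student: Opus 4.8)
The plan is to feed Moser's classification of surgeries on torus knots \cite{moser} into Theorem~\ref{thm:small-seifert-fibered}, supplemented by a direct look at the single reducible slope. Write $r = a/b$ in lowest terms and set $\sigma = |a - pqb|$. Moser shows that $S^3_r(T_{p,q})$ is always Seifert fibered, and more precisely that it is reducible exactly when $\sigma = 0$ --- which forces $b = 1$ and $r = pq$, giving $L(p,q) \# L(q,p)$ --- that it is a lens space exactly when $\sigma = 1$, and that when $\sigma \geq 2$ it fibers over $S^2$ with exactly three exceptional fibers, of multiplicities $|p|$, $|q|$, and $\sigma$. I would run the argument through these three cases.

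The slopes with $\sigma = 1$ are precisely $a = pqb \pm 1$, that is, $r = pq + \tfrac{1}{n}$ as $n$ runs over the nonzero integers; these yield lens spaces, whose cyclic fundamental groups make them automatically $SU(2)$-cyclic, and this accounts for the first family. For $\sigma \geq 2$ all three multiplicities $|p|, |q|, \sigma$ are at least $2$, so the base orbifold is genuinely $S^2(|p|,|q|,\sigma)$ and the manifold is not a lens space; thus Theorem~\ref{thm:small-seifert-fibered} applies. By that theorem such a manifold can be $SU(2)$-cyclic only if its base orbifold is $S^2(2,4,4)$ or $S^2(3,3,3)$. But $\gcd(|p|,|q|) = 1$, so the pair $|p|, |q|$ among the three multiplicities is coprime, whereas no two of $\{2,4,4\}$ and no two of $\{3,3,3\}$ are coprime. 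Hence neither exceptional orbifold can arise, every such surgery carries a non-abelian $SU(2)$ representation, and no slope outside $\{pq + \tfrac{1}{n}\} \cup \{pq\}$ can be $SU(2)$-cyclic.

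It remains to treat $r = pq$, where $\pi_1 = \Z/|p| \ast \Z/|q|$ and $|p|, |q| \geq 2$ since $T_{p,q}$ is nontrivial. A representation to $SU(2)$ amounts to a free choice of $A, B$ with $A^{|p|} = B^{|q|} = 1$. If one of $|p|, |q|$ equals $2$, say $|p| = 2$, then $A^2 = 1$ forces $A = \pm 1$ into the center, so $\langle A, B\rangle$ is abelian and the manifold, which is then $\RP^3 \# L(|q|, 2)$, is $SU(2)$-cyclic. If instead $|p|, |q| \geq 3$, I can choose non-central $A, B$ of the prescribed orders with distinct axes, producing a non-abelian image; hence $r = pq$ is $SU(2)$-cyclic exactly when $p$ or $q$ is $\pm 2$. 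I expect the only real friction to be bookkeeping --- matching the $\sigma = 1$ slopes to $pq + \tfrac{1}{n}$, checking that the three exceptional fibers are genuine so that Theorem~\ref{thm:small-seifert-fibered} is applicable, and verifying the coprimality obstruction that excludes $S^2(2,4,4)$ and $S^2(3,3,3)$ --- with the reducible case being the one spot that needs an actual (if elementary) representation-variety argument rather than a citation.
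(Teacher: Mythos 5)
Your proposal is correct and follows essentially the same route as the paper's proof: Moser's trichotomy on $\Delta = |pqn-m|$, Theorem~\ref{thm:small-seifert-fibered} together with coprimality of $|p|$ and $|q|$ to exclude the $S^2(2,4,4)$ and $S^2(3,3,3)$ orbifolds when $\Delta \geq 2$, and a direct analysis of $SU(2)$ representations of $\Z/|p| \ast \Z/|q|$ in the reducible case $\Delta = 0$. No substantive differences.
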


\begin{proof}
We consider the $\frac{m}{n}$-surgery on $T_{p,q}$, and let $\Delta = \Delta(\frac{m}{n},pq) = |pqn-m|$.  Moser \cite{moser} characterized these surgeries in terms of $\Delta$: if $\Delta > 0$ then the surgery is Seifert fibered with base $S^2(|p|,|q|,\Delta)$.  This is a lens space when $\Delta=1$, i.e.\ when $\frac{m}{n} = pq + \frac{1}{n}$, but when $\Delta \geq 2$ it is not a lens space and Theorem~\ref{thm:small-seifert-fibered} says it is not $SU(2)$-cyclic.  (Indeed, $|p|$ and $|q|$ are relatively prime, in contrast with the orders of the orbifold points of $S^2(2,4,4)$ or of $S^2(3,3,3)$.)

In the remaining case $\Delta=0$, meaning that $\frac{m}{n} = pq$, Moser showed that the $\frac{m}{n}$-surgery on $T_{p,q}$ is $L(p,q) \# L(q,p)$.  This has fundamental group
\[ \langle x,y \mid x^{|p|} = y^{|q|} = 1 \rangle, \]
so if $|p|, |q| \geq 3$ then we can define a non-cyclic $SU(2)$ representation by $x \mapsto e^{2\pi i/p}$ and $y \mapsto e^{2\pi j/q}$.  If this is not the case, then taking $p = \pm 2$ without loss of generality, we have $L(p,q) \# L(q,p) = \RP^3 \# L(q,\pm2)$, and any $SU(2)$-representation $\rho$ sends $x^2$ to $1$, so $\rho(x) = \pm 1$.  But then $\rho(x)$ commutes with $\rho(y)$, so $\rho$ has cyclic image.
\end{proof}

Our main tools in the rest of the proof of Theorem~\ref{thm:iterated-cables} will be the following two propositions from \cite{sivek-zentner}.

\begin{proposition}[{\cite[Proposition~10.2]{sivek-zentner}}] \label{prop:sz102}
Let $P$ be a satellite.  If $r$-surgery on $P(K)$ is $SU(2)$-cyclic, then so is $r$-surgery on $P(U)$, where $U$ is the unknot.
\end{proposition}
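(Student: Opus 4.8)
The plan is to prove the contrapositive: assuming $r$-surgery on $P(U)$ admits an irreducible (equivalently, non-abelian) representation to $SU(2)$, I will produce one on $r$-surgery on $P(K)$. Write $V$ for the solid torus in which the pattern $P$ lives, and set $M := V \ssm N(P)$, the pattern exterior, a manifold with two torus boundary components $\partial V$ and $\partial N(P)$. Forming the satellite glues $V$ to the companion exterior along $\partial V$, so that
\[ S^3_r(P(K)) = M_r \cup_{\partial V} E_K \qquad\text{and}\qquad S^3_r(P(U)) = M_r \cup_{\partial V} E_U, \]
where $E_K = S^3 \ssm N(K)$, $E_U$ is the solid torus $S^3 \ssm N(U)$, and $M_r$ denotes $M$ with its boundary component $\partial N(P)$ Dehn filled along the $r$-surgery slope. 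The point of this decomposition is that $M_r$ is literally the same piece in both cases: the $r$-surgery slope on $\partial N(P)$ is $r\mu_P + \lambda_P$, and the relevant longitude $\lambda_P$ -- the Seifert longitude of the satellite restricted to $\partial N(P)$ -- is independent of the companion once we measure $r$ against the $0$-framing, because the companion's contribution $w^2\cdot\mathrm{lk}(K,K^0)$ to the framing vanishes. Thus the two surgered manifolds differ only in whether we glue $E_K$ or the solid torus $E_U$ onto the common piece $M_r$.

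In the solid torus $E_U$ the meridian disk is bounded by the longitude $\lambda_U$ of $U$, so by van Kampen
\[ \pi_1\!\big(S^3_r(P(U))\big) \cong \pi_1(M_r) / \llangle \lambda_U \rrangle, \]
with $\mu_U$ identified with the core. Given an irreducible $\rho: \pi_1(S^3_r(P(U))) \to SU(2)$, precomposing with the quotient map yields $\bar\rho: \pi_1(M_r) \to SU(2)$ with $\bar\rho(\lambda_U) = 1$; since the quotient map is surjective, $\bar\rho$ has the same non-abelian image as $\rho$. Now I extend the boundary data over $E_K$. Under the gluing the curves $\mu_U,\lambda_U$ on $\partial V$ are identified with the meridian $\mu_K$ and $0$-framed longitude $\lambda_K$ of $K$. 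Because $\lambda_K$ is nullhomologous in $E_K$, the abelianization $H_1(E_K) \cong \Z$ is generated by $\mu_K$ alone, so I may define an abelian representation $\rho_K: \pi_1(E_K) \to SU(2)$ by $\rho_K(\mu_K) := \bar\rho(\mu_U)$; automatically $\rho_K(\lambda_K) = 1 = \bar\rho(\lambda_U)$. Thus $\bar\rho$ and $\rho_K$ agree on $\pi_1(\partial V)$.

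By the universal property of the amalgamated product $\pi_1(S^3_r(P(K))) \cong \pi_1(M_r) \ast_{\pi_1(\partial V)} \pi_1(E_K)$, these assemble into a representation $\tilde\rho: \pi_1(S^3_r(P(K))) \to SU(2)$ whose restriction to $\pi_1(M_r)$ is $\bar\rho$. Since $\bar\rho$ already has non-abelian image, so does $\tilde\rho$, and hence $S^3_r(P(K))$ is not $SU(2)$-cyclic, completing the contrapositive. I expect the only real subtlety to be the framing bookkeeping in the first paragraph -- verifying that $r$-surgery fills $\partial N(P)$ along the same slope for both companions, so that $M_r$ genuinely is a shared piece; everything after that is a formal van Kampen gluing, made possible by the single observation that $\lambda_K$ dies in $H_1(E_K)$ and so imposes no constraint on the abelian extension $\rho_K$.
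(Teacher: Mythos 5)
Your proof is correct, and it is essentially the argument behind the cited result: the paper only quotes \cite[Proposition~10.2]{sivek-zentner}, whose proof rests on exactly the two facts you isolate, namely that $M_r$ is a common piece of $S^3_r(P(K))$ and $S^3_r(P(U))$ (because the $0$-framings match up under the satellite gluing) and that $\lambda_K$ is nullhomologous in $E_K$, so that any representation of $\pi_1(E_U)$ extends to an abelian one on $\pi_1(E_K)$. Your van Kampen gluing of $\bar\rho$ with $\rho_K$ is just the constructive form of the resulting surjection $\pi_1(S^3_r(P(K))) \twoheadrightarrow \pi_1(M_r)\ast_{\pi_1(\partial V)}\Z \cong \pi_1(S^3_r(P(U)))$ along which the reference pulls back an irreducible representation, so the two arguments coincide.
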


\begin{proposition}[{\cite[Proposition~10.3]{sivek-zentner}}] \label{prop:sz103}
Let $P$ be a satellite with winding number $w \neq 0$.  If $r$-surgery on $P(K)$ is $SU(2)$-cyclic, then $\frac{r}{w^2}$-surgery on $K$ is also $SU(2)$-cyclic.
\end{proposition}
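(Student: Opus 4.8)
The plan is to prove the contrapositive: assuming $\frac{r}{w^2}$-surgery on $K$ is \emph{not} $SU(2)$-cyclic, I would produce a non-abelian $SU(2)$ representation of $\pi_1(S^3_r(P(K)))$. Write $P \subset V = S^1 \times D^2$ for the pattern and decompose the satellite exterior along the companion torus $T = \partial N(K)$ as $S^3 \ssm N(P(K)) = W \cup_T E_K$, where $W = V \ssm N(P)$ is the pattern complement and $E_K = S^3 \ssm N(K)$. Performing $r$-surgery fills the pattern boundary $\partial N(P)$, so $S^3_r(P(K)) = W(r) \cup_T E_K$, and van Kampen gives $\pi_1(S^3_r(P(K))) \cong \pi_1(W(r)) \ast_{\pi_1(T)} \pi_1(E_K)$.

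The first step is to harvest a representation from the companion side. Since $\pi_1(S^3_{r/w^2}(K))$ is a quotient of $\pi_1(E_K)$ (killing the slope-$\frac{r}{w^2}$ curve $\gamma$), any irreducible $SU(2)$ representation of it pulls back to a representation $\alpha \colon \pi_1(E_K) \to SU(2)$ with the same non-abelian image and with $\alpha(\gamma) = 1$. Restricted to the torus $T$, the abelian representation $\alpha|_{\pi_1(T)}$ may be conjugated into a maximal torus $U(1) \subset SU(2)$, so that $\alpha(\mu) = e^{i\theta}$ and $\alpha(\lambda) = e^{i\eta}$ for the meridian $\mu$ and longitude $\lambda$ of $K$, and the relation $\alpha(\gamma)=1$ becomes a single linear congruence on $\theta, \eta$ coming from the slope $\frac{r}{w^2}$.

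The crux is to extend $\alpha|_{\pi_1(T)}$ across $W(r)$ and then glue. Because the target on $T$ is abelian, an extension over $W(r)$ exists as soon as the prescribed values factor through $H_1(W(r))$ compatibly with the inclusion $\pi_1(T) \hookrightarrow \pi_1(W(r))$; here the winding number enters through the homological relations $[\mu_V] = w[\mu_P]$ and the expression for $[\lambda_V]$ in $H_1(W)$, and the classical identity that $r$-surgery on the pattern induces $\frac{r}{w^2}$-surgery on $K$ guarantees that the congruence satisfied by $(\theta,\eta)$ is exactly what is needed to solve for a $w$-th root $e^{i\xi}$ of $\alpha(\mu)$ realizing the required abelian character of $H_1(W(r))$. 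Van Kampen then assembles $\alpha$ and this extension into a representation $\rho\colon \pi_1(S^3_r(P(K))) \to SU(2)$ whose restriction to $\pi_1(E_K)$ is the non-abelian $\alpha$; hence $\rho$ has non-abelian image, contradicting $SU(2)$-cyclicity.

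I expect the main obstacle to be the framing and winding-number bookkeeping in this last step: carefully pinning down the class of the pattern's Seifert longitude $[\lambda_P]$ in $H_1(W)$ and verifying that the $w^2$ in the companion slope is precisely the factor that makes the $w$-th root $e^{i\xi}$ exist. A secondary case to dispose of is when $\alpha|_{\pi_1(T)}$ is central, i.e.\ takes values in $\{\pm 1\}$; there the extension condition reduces to a $\Z/2$-linear compatibility on $H_1(W(r);\Z/2)$, which must be checked separately but is a finite computation.
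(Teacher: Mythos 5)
The paper does not actually prove this statement: it is imported verbatim from \cite[Proposition~10.3]{sivek-zentner}, and your argument reconstructs what is essentially the proof given there, namely extending a pulled-back irreducible representation $\alpha$ of $\pi_1(E_K)$ over the filled pattern space $W(r)$ by an abelian representation and gluing. The bookkeeping you defer does work out, and is worth recording: with $\lambda_V$ identified with the Seifert longitude $\lambda_K$, one has $H_1(W)\cong \Z\langle \mu_P\rangle\oplus\Z\langle\lambda_V\rangle$ with $[\mu_V]=w[\mu_P]$ and $[\lambda_{P(K)}]=w[\lambda_V]$, so filling the slope $r=p/q$ imposes $p[\mu_P]+qw[\lambda_V]=0$, and the kernel of $H_1(T)\to H_1(W(r))$ is generated by an integer multiple of the primitive class of slope $\tfrac{p}{qw^2}=\tfrac{r}{w^2}$; since $\alpha$ kills that primitive class, its restriction to $H_1(T)$ descends to the image of $H_1(T)$ in $H_1(W(r))$ and then extends to all of $H_1(W(r))$ because $U(1)$ is a divisible (hence injective) abelian group --- this is exactly your ``choose the right $w$-th root'' step, and the required root always exists. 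Two of your cautions are unnecessary: the central case $\alpha(\pi_1(T))\subset\{\pm1\}$ needs no separate treatment since $\{\pm1\}$ lies in the maximal torus and the same divisibility argument applies verbatim; and although $\pi_1(T)$ need not inject into $\pi_1(W(r))$ (so ``amalgamated product'' should be ``pushout''), the universal property of the pushout is all that is needed to assemble the two representations.
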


Let $K = C_{m,n}(T_{p,q})$, where $m\neq 0$ and $n,|p|,q \geq 2$, and $\gcd(m,n)=\gcd(p,q)=1$; and suppose that $r$-surgery on $K$ is $SU(2)$-cyclic for some $r \in \Q$.  Then Proposition~\ref{prop:sz103} says that $\frac{r}{n^2}$-surgery on $T(p,q)$ is $SU(2)$-cyclic as well, which gives a strong restriction on the possible values of $r$.  In particular, by Proposition~\ref{prop:tpq-slopes} we must have $r = n^2(pq+\frac{1}{a})$ or $r=n^2pq$.

\begin{proposition} \label{prop:m-not-1}
If $|m| \neq 1$ and $K = C_{m,n}(T_{p,q})$ admits a nontrivial $SU(2)$-cyclic surgery, then $(m,n)=(2pq\pm 1,2)$ and the $SU(2)$-cyclic surgeries are 
\[ S^3_{mn}(K) = L(2pq\pm1,2q^2) \# \RP^3, \qquad S^3_{mn\mp1}(K) = L(4pq\pm1,4q^2). \]
\end{proposition}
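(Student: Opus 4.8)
The plan is to extract strong numerical constraints on the slope $r$ from the two reduction principles of \cite{sivek-zentner}, and then to show that these can only be met by the two families in the statement. First I would apply Proposition~\ref{prop:sz103}: the cable pattern $C_{m,n}$ has winding number $n \neq 0$, so if $r$-surgery on $K = C_{m,n}(T_{p,q})$ is $SU(2)$-cyclic then $\frac{r}{n^2}$-surgery on $T_{p,q}$ is $SU(2)$-cyclic, and Proposition~\ref{prop:tpq-slopes} forces $\frac{r}{n^2} = pq + \frac1a$ for a nonzero integer $a$, or $\frac{r}{n^2} = pq$. In parallel I would apply Proposition~\ref{prop:sz102}: replacing the companion by the unknot gives $C_{m,n}(U) = T_{m,n}$, a nontrivial torus knot since $|m| \geq 2$ and $n \geq 2$, on which $r$-surgery must again be $SU(2)$-cyclic. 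Proposition~\ref{prop:tpq-slopes} then forces $r = mn + \frac1b$ for a nonzero integer $b$, or $r = mn$ (the latter only when $|m| = 2$ or $n = 2$).

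The core of the argument is to combine these. I would first rule out the reducible torus-knot slope $\frac{r}{n^2} = pq$: then $r = n^2 pq$ is an integer lying in $\{mn-1, mn, mn+1\}$, and each possibility contradicts $\gcd(m,n) = 1$ with $n \geq 2$. So $r = n^2 pq + \frac{n^2}{a}$. Subtracting the two expressions for $r$ gives $\frac1b - \frac{n^2}{a} = n(npq - m)$, which is an integer; since $n \nmid m$ it is a nonzero multiple of $n$. Writing $I = n(npq-m)$ and clearing denominators yields $a(1 - Ib) = n^2 b$, and because $\gcd(1 - Ib, b) = 1$ we get $b \mid a$; substituting $a = bc$ collapses this to $(1 - Ib) \mid n^2$ with $1 - Ib \equiv 1 \pmod n$. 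The elementary lemma that a divisor $e$ of $n^2$ with $e \equiv 1 \pmod n$ must be $\pm 1$ — since such congruence forces $\gcd(e,n)=1$, and a divisor of $n^2$ coprime to $n$ is a unit — then finishes everything: it forces $n = 2$, $m = 2pq + \epsilon$ with $\epsilon = \pm 1$, and $b = -\epsilon$, giving $r = 4pq + \epsilon$. Handling the companion case $r = mn$ in the same spirit (here $a \mid n$ and $\gcd(m,n)=1$ directly force $|a| = n$, $m = npq+\epsilon$, and then $|m|=2$ is impossible since $|npq| \geq 8$, so $n=2$) yields $(m,n) = (2pq + \epsilon, 2)$ and $r = 4pq + 2\epsilon$. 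I expect this number-theoretic collapse — in particular ruling out $|b| \geq 2$ — to be the main obstacle, and the divisor lemma is what makes it go through cleanly.

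Finally, with $(m,n) = (2pq + \epsilon, 2)$ pinned down, I would identify the two surgered manifolds and confirm they are genuinely $SU(2)$-cyclic. The slope $r = mn = 4pq + 2\epsilon$ is the cabling slope, so by the standard reducible-surgery formula for cables together with Moser's \cite{moser} description of lens-space surgeries on $T_{p,q}$, we have $S^3_{mn}(K) \cong S^3_{m/2}(T_{p,q}) \# L(2,m) = L(2pq + \epsilon, 2q^2) \# \RP^3$ (the second factor is $\RP^3$ because $m$ is odd). This is $SU(2)$-cyclic since any homomorphism $\Z/2 \to SU(2)$ lands in the center $\{\pm 1\}$, so the image of $\pi_1$ is abelian. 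The slope $r = 4pq + \epsilon = mn - \epsilon$ gives the lens space $L(4pq + \epsilon, 4q^2)$ by the cable surgery formula, which can be cross-checked against Greene's classification \cite{greene}; it is $SU(2)$-cyclic because its fundamental group is cyclic. This matches the asserted list and completes the proof.
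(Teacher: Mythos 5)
Your proposal is correct and follows essentially the same route as the paper: reduce via Propositions~\ref{prop:sz102} and \ref{prop:sz103} to the torus-knot classification of Proposition~\ref{prop:tpq-slopes}, solve the resulting Diophantine constraints on $r$, and identify the surgered manifolds via Gordon's cable surgery formula and Moser. The only difference is cosmetic: where the paper compares fractions in lowest terms to force $a=\epsilon bn^2$ and then bounds $\tfrac{2}{b}=n(npq-m)$, you package the same arithmetic as a divisibility statement about $1-Ib\mid n^2$ with $1-Ib\equiv 1\pmod n$; both collapse to $n=2$, $m=2pq\pm1$ in the same way.
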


\begin{proof}
The assumption $|m| \neq 1$ tells us that $C_{m,n}(U) = T_{m,n}$ is a nontrivial torus knot.  Then $r$ must be an $SU(2)$-cyclic slope for $T_{m,n}$ as well by Proposition~\ref{prop:sz102}, hence by Proposition~\ref{prop:tpq-slopes} it is either $mn$ or $mn+\frac{1}{b}$ for some integer $b\neq 0$.

If $r=mn$, then we must have either $m=\pm2$ or $n=2$, and
\[ S^3_{mn}(C_{m,n}(T_{p,q})) = S^3_{m/n}(T_{p,q}) \# S^3_{n/m}(U), \]
by \cite[Corollary~7.3]{gordon}.  This is not $SU(2)$-cyclic if $S^3_{m/n}(T_{p,q})$ is not, and $\frac{m}{n}$ is not $pq$ since it is not an integer, so then $\frac{m}{n} = pq + \frac{1}{a} = \frac{apq+1}{a}$ for some nonzero $a \in \Z$.  Both sides are in lowest terms, so $a=\pm n$ and $m=npq\pm1$.  It follows that in fact $n=2$, since $|m| \geq |npq|-1 \geq 11$.  But then $\frac{m}{n}$-surgery on $T_{p,q}$ is the lens space $L(m,nq^2)$ by \cite{moser}, so $S^3_r(K)$ is a connected sum of this lens space with $S^3_{n/m}(U) = \RP^3$.

In the remaining cases, we have $r=mn+\frac{1}{b}$.  If this is equal to $n^2pq$, then $pq = \frac{bmn+1}{bn^2}$, but the right hand side is not an integer since $n \geq 2$ is coprime to $bmn+1$.  Thus we must have
\[ mn+\tfrac{1}{b} = r = n^2\left(pq+\tfrac{1}{a}\right), \]
and so $\frac{bmn+1}{bn^2} - \frac{1}{a} = pq \in \Z$.  Arguing as above we see that $a=\epsilon bn^2$ where $\epsilon=\pm 1$, and then $mn+\frac{1}{b} = n^2pq + \frac{\epsilon}{b}$.  If $\epsilon=1$ then we have $\frac{m}{n}=pq$, which contradicts $\frac{m}{n} \not\in \Z$.  Thus $\epsilon=-1$ and we have $\frac{2}{b} = n(npq-m)$.  The right side is a nonzero integer and a multiple of $n \geq 2$ while the left side has magnitude $\frac{2}{|b|} \leq 2$, so both sides must have magnitude $2$ and we have $b=\mp 1$, $n=2$, $a=-bn^2=\pm 4$, and $m=npq-\frac{1}{b}=2pq\pm1$.  But then $S^3_r(K)$ is
\[ S^3_{mn \mp 1}(C_{m,n}(T_{p,q})) \cong S^3_{(mn\mp1)/n^2}(T_{p,q}) = S^3_{r/n^2}(T_{p,q}) = S^3_{pq\pm\frac{1}{4}}(T_{p,q}), 
\]
where the first homeomorphism comes from \cite[Corollary~7.3]{gordon}, and this is again a lens space by \cite{moser}, namely $L(4pq\pm 1, 4q^2)$.
\end{proof}

Proposition~\ref{prop:m-not-1} handles all cables $K = C_{m,n}(T_{p,q})$ of torus knots except those where $m = \pm 1$.  We will show that $C_{\pm 1,n}(T_{p,q})$ does not have any $SU(2)$-cyclic surgeries at all by making use of the following lemma, many details of which are adapted from the proof of \cite[Theorem~2.8]{ni-zhang}.

\begin{lemma} \label{lem:1-n-arc}
Fix an integer $n \geq 2$, and let $C = C_{1,n}(K)$.  Let $\rho_K: \pi_1(S^3 \ssm K) \to SU(2)$ be an irreducible representation satisfying $\rho_K(\mu_K\lambda_K^n) = 1$ and $\rho_K(\lambda_K) = e^{i\beta}$, where $0 \leq \beta < 2\pi$.  Then there is a continuous family of irreducible representations
\[ \rho_s: \pi_1(S^3 \ssm C) \to SU(2), \qquad 0 \leq s \leq \pi \]
for which $\rho_s(\mu_C^n\lambda_C) = \pm 1$ and $\rho_s(\mu_C)$ is conjugate to $e^{i\alpha_s}$, where
\[ \cos(\alpha_s) = \cos\left(\frac{\pi}{n}\right) \cos(\beta) - \cos(s)\sin\left(\frac{\pi}{n}\right)\sin(\beta). \]
Moreover, we have $\beta \not\in \pi\Z$ and $\alpha_s \not\in \pi\Z$ for all $s$.
\end{lemma}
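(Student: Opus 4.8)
The plan is to realize the cable complement as a union of the companion exterior and a cable space glued along a torus, and to build each $\rho_s$ by extending the given $\rho_K$ across the cable space. Writing $C = C_{1,n}(K)$, I would decompose
\[ S^3 \ssm N(C) = E_K \cup_{T^2} W, \]
where $E_K = S^3 \ssm N(K)$, $T^2 = \partial N(K)$, and $W = N(K) \ssm N(C)$ is the cable space. This $W$ is Seifert fibered over the annulus with a single singular fiber of order $n$, and its regular fiber, viewed on $T^2$, is the cabling slope $\mu_K\lambda_K^n$. The hypothesis $\rho_K(\mu_K\lambda_K^n)=1$ says precisely that $\rho_K$ kills this slope, so that on the gluing torus $\rho_K|_{T^2}$ is abelian with $\rho_K(\lambda_K)=e^{i\beta}$ and $\rho_K(\mu_K)=e^{-in\beta}$. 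First I would conjugate so that $\rho_K|_{T^2}$ takes values in the circle $\{e^{i\theta}\}$.

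The next step is to classify the extensions of this boundary data across $W$, following the computation in the proof of \cite[Theorem~2.8]{ni-zhang}. Using the Seifert (equivalently braid) presentation of $\pi_1(W)$, an irreducible extension sends the central regular fiber to $\pm1$ and is otherwise governed by the image of the singular fiber, whose order $n$ forces its image to be a rotation by $\pi/n$. The remaining freedom is the angle $s\in[0,\pi]$ between the axis of this rotation and the common axis $\{e^{i\theta}\}$ of $\rho_K|_{T^2}$, and letting $s$ vary produces the continuous family $\rho_s$. Computing $\rho_s(\mu_C)$ then amounts to multiplying the singular-fiber rotation (angle $\pi/n$) by $\rho_K(\lambda_K)=e^{i\beta}$ (angle $\beta$) with relative axis angle $s$, so the quaternion identity $\re(uv)=\re u\,\re v - \langle \Img u, \Img v\rangle$ gives exactly
\[ \cos\alpha_s = \cos\!\big(\tfrac{\pi}{n}\big)\cos\beta - \cos(s)\sin\!\big(\tfrac{\pi}{n}\big)\sin\beta. \]
Since $\mu_C^n\lambda_C$ is the regular fiber of $W$, we get $\rho_s(\mu_C^n\lambda_C)=\pm1$ at the same time.

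Finally, irreducibility is automatic: each $\rho_s$ restricts to $\rho_K$ on $\pi_1(E_K)$, so its image contains the nonabelian image of $\rho_K$. The two non-triviality statements follow. Since $\mu_K$ normally generates $\pi_1(E_K)$ and $\rho_K(\mu_K)=\rho_K(\lambda_K)^{-n}$, an irreducible $\rho_K$ cannot have $\rho_K(\lambda_K)=\pm1$, for otherwise $\rho_K(\mu_K)=\pm1$ and the image would lie in $\{\pm1\}$; hence $\beta\notin\pi\Z$. Likewise $\mu_C$ normally generates $\pi_1(S^3 \ssm C)$, so irreducibility of $\rho_s$ forces $\rho_s(\mu_C)\neq\pm1$, giving $\alpha_s\notin\pi\Z$ for every $s$. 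I expect the main obstacle to be the middle step: getting the peripheral bookkeeping of the cable space exactly right, in particular justifying the half-angle $\pi/n$ and tracking the sign of $\rho_s(\mu_C^n\lambda_C)$, which is the technical content adapted from \cite{ni-zhang}.
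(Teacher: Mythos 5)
Your proposal follows essentially the same route as the paper: the same decomposition of the cable exterior into the companion exterior and the cable space, the same Ni--Zhang presentation of the cable space with $\rho_s(h)$ a rotation by $\pi/n$ about a varying axis $v_s$, the same quaternion real-part computation for $\cos\alpha_s$, and the same irreducibility and normal-generation arguments for $\beta,\alpha_s\notin\pi\Z$. The one point you flag but do not resolve --- matching the sign of $\rho_s(\mu_K\lambda_K^n)=\pm1$ across the gluing torus --- is handled in the paper by replacing $\rho_K$ with $\chi\cdot\rho_K$ for the nontrivial central character $\chi$ of $\pi_1(S^3\ssm N(K))$ when the sign is $-1$ (so the restriction of $\rho_s$ to the companion exterior is not literally $\rho_K$ in that case, though it is still irreducible and agrees with $\rho_K$ on $\lambda_K$).
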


\begin{proof}
We first observe that $\beta \not\in \pi\Z$.  Indeed, if this were the case then the condition $\rho_K(\mu_K \lambda_K^n) = 1$ would imply that $\rho_K(\mu_K) = \pm 1$, and hence that $\rho_K$ is reducible.  Similarly, once we construct the representations $\rho_s$ it will follow immediately that $\alpha_s \not\in \pi\Z$, since the irreducibility of $\rho_s$ implies $\rho_s(\mu_C) \neq \pm 1$.

We now write $C_{1,n} \subset S^1 \times D^2$ for the pattern knot which produces the $(1,n)$-cable, so that the exterior of $C = C_{1,n}(K)$ decomposes as
\[ S^3 \ssm N(C) = (S^3 \ssm N(K)) \cup_{T^2} ((S^1\times D^2) \ssm N(C_{1,n})). \]
The cable space $X = (S^1 \times D^2) \ssm N(C_{1,n})$ is Seifert fibered, with fiber $\mu_K\lambda_K^n$ on $\partial(S^1\times D^2)$ and $\mu_C^n\lambda_C$ on $\partial N(C_{1,n})$; see \cite[Lemma~7.2]{gordon}.  Here we identify the peripheral curves $\mu_K,\lambda_K \in \partial N(K)$ with the corresponding curves on $\partial (S^1\times D^2)$.

Given the decomposition
\[ \pi_1(S^3 \ssm N(C)) = \pi_1(S^3 \ssm N(K)) \ast_{\pi_1(T^2)} \pi_1(X), \]
we will first define $\rho_s$ on $\pi_1(X)$ and then extend it over $\pi_1(S^3 \ssm N(K))$.  According to the proof of \cite[Claim~2.9]{ni-zhang}, the cable space $X$ has fundamental group
\[ \pi_1(X) = \langle h,\lambda_K \mid h^n\lambda_K = \lambda_Kh^n \rangle, \]
with $\mu_C = h\lambda_K$ being a meridian of $C_{1,n}$.  We define $\rho_s$ on $\pi_1(X)$ by taking $v_s = \cos(s)i+\sin(s)j$ and then setting 
\[ \rho_s(h) = \cos\left(\frac{\pi}{n}\right) + \sin\left(\frac{\pi}{n}\right)v_s, \qquad \rho_s(\lambda_K) = \rho_K(\lambda_K) = e^{i\beta}. \]
Then $\rho_s(h^n) = -1$ commutes with $\rho_s(\lambda_K) = e^{i\beta}$, so $\rho_s$ is well-defined on $\pi_1(X)$.  Moreover, $\rho_s$ is irreducible for $0<s<\pi$ since $\rho_s(h)$ does not commute with $\rho_s(\lambda_K)$ (here we use the fact that $\beta \not\in \pi\Z$), but the Seifert fibers are central in $\pi_1(X)$, so we must have
\[ \rho_s(\mu_K\lambda_K^n) = \rho_s(\mu_C^n\lambda_C) \in \{\pm 1\} \]
for $0<s<\pi$, and then for $0 \leq s \leq \pi$ by continuity.

Now we extend $\rho_s$ to $\pi_1(S^3 \ssm N(K))$ as promised.  We note that $\rho_s(\lambda_K) = e^{i\beta}$ and $\rho_s(\mu_K\lambda_K^n) = \pm 1$ are constant in $s$, and that 
$\rho_s(\lambda_K) = \rho_K(\lambda_K)$ and $\rho_K(\mu_K\lambda_K^n)=1$ by definition.  If in fact $\rho_s(\mu_K\lambda_K^n) = +1$, then $\rho_s|_{\pi_1(X)}$ agrees with $\rho_K$ on their common torus and we let $\rho_s|_{\pi_1(S^3 \ssm N(K))} = \rho_K$.  If instead $\rho_s(\mu_K\lambda_K^n) = -1$, then we replace $\rho_K$ with the representation
\[ \tilde\rho_K = \chi\cdot\rho_K: \pi_1(S^3 \ssm N(K)) \to SU(2), \]
in which we multiply $\rho_K$ by the nontrivial central character
\[ \chi: \pi_1(S^3 \ssm N(K)) \xrightarrow{\operatorname{ab}} H_1(S^3 \ssm N(K)) \cong \Z \xrightarrow{n \mapsto (-1)^n} \{\pm 1\}. \]
Then $\tilde\rho_K(\lambda_K) = \rho_K(\lambda_K)$ and $\tilde\rho_K(\mu_K) = -\rho_K(\mu_K)$, so we have
\[ \tilde\rho_K(\mu_K\lambda_K^n) = -\rho_K(\mu_K\lambda_K^n) = -1 = \rho_s(\mu_K\lambda_K^n) \]
and we let $\rho_s|_{\pi_1(S^3 \ssm N(K))} = \tilde\rho_K$.

The end result of all of this is a family of representations
\[ \rho_s: \pi_1(S^3 \ssm C) \to SU(2), \qquad 0 \leq s \leq \pi, \]
which are irreducible for all $s$ since their restriction to $\pi_1(S^3 \ssm N(K))$ is irreducible, and which satisfy $\rho_s(\mu_C^n\lambda_C) = \pm1$ and
\[ \rho_s(\mu_C) = \rho_s(h\lambda_K) = \left(\cos\left(\frac{\pi}{n}\right)+\sin\left(\frac{\pi}{n}\right)v_s\right) \cdot e^{i\beta}. \]
Thus $\rho_s(\mu_C)$ is conjugate to $e^{i\alpha_s}$ for some $\alpha_s$, and we can compute that $\cos(\alpha_s) = \re(\rho_s(\mu_C))$ is exactly as claimed.
\end{proof}

\begin{proposition} \label{prop:m-1}
If $C=C_{\pm 1,n}(T_{p,q})$ for some $n \geq 2$, then $C$ does not admit any nontrivial $SU(2)$-cyclic surgeries.
\end{proposition}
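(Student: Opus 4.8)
The plan is to argue by contradiction: suppose that $r$-surgery on $C = C_{\pm 1, n}(T_{p,q})$ is $SU(2)$-cyclic for some slope $r$, and derive a contradiction by exhibiting an irreducible representation $\pi_1(S^3_r(C)) \to SU(2)$. The key structural point is that the winding number of the $(\pm 1, n)$-cable pattern is $\pm 1$, so Proposition~\ref{prop:sz103} does \emph{not} constrain the slope in a useful way; this is why the simpler argument of Proposition~\ref{prop:m-not-1} fails here and we must work harder. First I would note that since $|p| \neq 1$, the knot $T_{p,q}$ itself carries a rich supply of irreducible $SU(2)$ representations of its exterior, and in particular there exist irreducible $\rho_K : \pi_1(S^3 \ssm T_{p,q}) \to SU(2)$ sending the longitude $\lambda_K$ to any prescribed value $e^{i\beta}$ in a suitable range. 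These are exactly the input representations demanded by Lemma~\ref{lem:1-n-arc}, once we arrange the normalizing condition $\rho_K(\mu_K \lambda_K^n) = 1$.

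The heart of the argument is to feed such a $\rho_K$ into Lemma~\ref{lem:1-n-arc} to produce a continuous one-parameter family $\rho_s$, $0 \le s \le \pi$, of irreducible representations of $\pi_1(S^3 \ssm C)$ satisfying $\rho_s(\mu_C^n \lambda_C) = \pm 1$ and with $\rho_s(\mu_C)$ conjugate to $e^{i\alpha_s}$. The slope condition imposed by the surgery is a single equation constraining $\rho_s$ on the peripheral torus $\partial N(C)$: the representation descends to $\pi_1(S^3_r(C))$ precisely when $\rho_s$ kills the surgery slope $\mu_C^a \lambda_C^b$ (for $r = a/b$). The strategy is to use the intermediate value theorem on the continuous function $s \mapsto \alpha_s$ (equivalently on the trace of $\rho_s$ evaluated on the peripheral curves) to show that for \emph{some} $s \in [0,\pi]$ the family $\rho_s$ meets the surgery condition and hence factors through $\pi_1(S^3_r(C))$, yielding an irreducible representation and contradicting $SU(2)$-cyclicity. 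Concretely, as $s$ ranges over $[0,\pi]$ the quantity $\cos(\alpha_s)$ sweeps a full interval
\[ \cos\!\left(\tfrac{\pi}{n} - \beta\right) \;\le\; \cos(\alpha_s) \;\le\; \cos\!\left(\tfrac{\pi}{n} + \beta\right), \]
so $\alpha_s$ traverses a genuine range of values; by choosing $\beta$ appropriately and using $n \ge 2$ together with $|p| \neq 1$, this range is wide enough to hit the value dictated by the surgery slope.

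The main obstacle will be the bookkeeping on the peripheral torus: I must track how the meridian $\mu_C$ and longitude $\lambda_C$ of the cable relate to the surgery slope, pin down which central value $\rho_s(\mu_C^n \lambda_C)$ takes (it may jump between $+1$ and $-1$ along the family, though by continuity it is locally constant), and verify that the window of achievable $\alpha_s$ values is large enough to contain the slope constraint for \emph{every} candidate $r$ simultaneously. In particular I expect to need the freedom in choosing $\beta$ — ranging over the irreducible representations of $T_{p,q}$ — to handle the various possible slopes, and to confront the case $r = \infty$ (i.e.\ the unperformed surgery) or slopes where $\lambda_C$ plays a degenerate role. Once the peripheral arithmetic is organized, the existence of a suitable $s$ follows from the intermediate value theorem applied to the explicit formula for $\cos(\alpha_s)$ in Lemma~\ref{lem:1-n-arc}, completing the contradiction and hence the proof.
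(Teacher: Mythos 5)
Your overall architecture --- feed an irreducible representation of the torus knot exterior into Lemma~\ref{lem:1-n-arc} and exploit the resulting arc $\rho_s$ --- matches the first half of the paper's proof, but there are two genuine gaps. First, your claim that the $(\pm 1,n)$-cable pattern has winding number $\pm 1$ is wrong: with the paper's conventions the cable represents $m[\mu]+n[\lambda]$, so its winding number in the solid torus is $n\geq 2$, and Proposition~\ref{prop:sz103} applies with $w=n$. What actually degenerates when $m=\pm1$ is Proposition~\ref{prop:sz102}, since $C_{\pm1,n}(U)$ is the unknot. This is not a cosmetic error: the conclusion of Proposition~\ref{prop:sz103}, namely that $r/n^2$ must be an $SU(2)$-cyclic slope for $T_{p,q}$ and hence $|r|\geq(|pq|-1)n^2$, is an essential half of the argument, and you have discarded it.

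Second, the intermediate value theorem alone cannot finish the proof. The condition $\rho_K(\mu_K\lambda_K^n)=1$ together with the central relation $\rho_K(\mu_K^{pq}\lambda_K)=\pm1$ forces $\beta\in\frac{\pi}{pqn-1}\Z$, so $\beta$ ranges over a discrete set rather than being freely adjustable, and the arc $s\mapsto\alpha_s$ sweeps an interval of length only about $2\pi/|pqn-1|$, lying on a line of slope $-n$ in the pillowcase. Such a short arc is guaranteed to meet the surgery condition $\rho_s(\mu_C^c\lambda_C^d)=1$ for $r=c/d$ only when $|c-nd|$ is sufficiently large; in other words, it obstructs only slopes with $|r-n|$ roughly at least $|pqn-1|/d$, and slopes close to $n$ pass the IVT test unscathed. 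The paper closes exactly this gap by using \cite[Proposition~3.1]{sivek-zentner} to convert the arc into the upper bound $|r-n|<|pq|n+1$ and then playing it against the lower bound $|r|\geq(|pq|-1)n^2$ coming from Proposition~\ref{prop:sz103}. Without that winding-number input your argument cannot rule out, for example, integer slopes near $n$, so the proof as proposed does not go through.
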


\begin{proof}
We assume that $C = C_{1,n}(T_{p,q})$, by replacing $C$ with its mirror as needed.

Suppose that $r$ is an $SU(2)$-cyclic slope for $C$.  Just as in Proposition~\ref{prop:m-not-1}, it is still the case that $r$ must be either $n^2pq$ or $n^2(pq+\frac{1}{a})$ for some integer $a\neq 0$, by Proposition~\ref{prop:sz103}.  Note that in particular we must have $|r| \geq (|pq|-1)n^2$.  Our goal will be to show that this is impossible by constructing a path of irreducible representations $\pi_1(S^3 \ssm C) \to SU(2)$ whose image in the pillowcase (i.e., the character variety of the boundary torus) requires $r$ to be close to $n$, by \cite[Proposition~3.1]{sivek-zentner}.

Let $\mu_T,\lambda_T,\mu_C,\lambda_C$ denote meridians and longitudes of $T_{p,q}$ and $C$ respectively.  Proposition~\ref{prop:tpq-slopes} says that $\frac{1}{n}$-surgery on $T_{p,q}$ is not $SU(2)$-cyclic, so there is an irreducible representation
\[ \rho_T: \pi_1(S^3 \ssm T_{p,q}) \to SU(2) \]
such that $\rho_T(\mu_T\lambda_T^n) = 1$, and up to conjugacy we have $\rho_T(\lambda_T) = e^{i\beta}$ for some $\beta \in [0,2\pi)$.  We apply Lemma~\ref{lem:1-n-arc} to see that $\beta \not\in \pi\Z$, and that there is a family of irreducible representations
\[ \rho_s: \pi_1(S^3 \ssm C) \to SU(2), \qquad 0 \leq s \leq \pi \]
such that $\rho_s(\mu_C^n\lambda_C) = \pm 1$ and $\rho_s(\mu_C)$ is conjugate to $e^{i\alpha_s}$, where
\[ \cos(\alpha_s) = \re(\rho_s(\mu_C)) = \cos\left(\frac{\pi}{n}\right) \cos(\beta) - \cos(s)\sin\left(\frac{\pi}{n}\right)\sin(\beta). \]
In particular, we have $\cos(\alpha_0) = \cos\left(\beta+\frac{\pi}{n}\right)$ and $\cos(\alpha_\pi) = \cos\left(\beta-\frac{\pi}{n}\right)$.

We arrange by conjugation that $0 < \alpha_s < \pi$ for all $s$, observing that $\alpha_s \not\in \pi\Z$, and we must then have
\[ \alpha_0 = \begin{cases}
\beta + \tfrac{\pi}{n}, & 0 < \beta < \tfrac{n-1}{n}\pi \\
\tfrac{2n-1}{n}\pi - \beta, & \tfrac{n-1}{n}\pi < \beta < \tfrac{2n-1}{n}\pi \\
\beta - \frac{2n-1}{n}\pi, & \tfrac{2n-1}{n}\pi < \beta < 2\pi
\end{cases},
\qquad
\alpha_\pi = \begin{cases}
\frac{\pi}{n}-\beta, & 0 < \beta < \frac{\pi}{n} \\
\beta-\frac{\pi}{n}, & \frac{\pi}{n} < \beta < \frac{n+1}{n}\pi \\
\frac{2n+1}{n}\pi - \beta, & \frac{n+1}{n}\pi < \beta < 2\pi.
\end{cases} \]
(The cases $\beta = \frac{n-1}{n}\pi, \frac{2n-1}{n}\pi$ and $\beta = \frac{\pi}{n}, \frac{n+1}{n}\pi$ do not occur since they would imply $\alpha_0 \in \pi\Z$ and $\alpha_\pi \in \pi\Z$ respectively.)  Since $\cos(s)$ is monotonic on the interval $0 \leq s \leq \pi$, the same is true of $\cos(\alpha_s)$, and so the values of $\alpha_s$ must cover an interval of length exactly
\begin{equation} \label{eq:cable-alpha}
|\alpha_\pi-\alpha_0| = \begin{cases}
2\beta, & 0 < \beta < \frac{\pi}{n} \\
2|\pi-\beta|, & \pi-\frac{\pi}{n} < \beta < \pi+\frac{\pi}{n} \\
2(2\pi-\beta), & 2\pi-\frac{\pi}{n} < \beta < 2\pi \\
2\pi/n & \mathrm{otherwise}.
\end{cases}
\end{equation}

In the first three cases of \eqref{eq:cable-alpha}, the quantity $|\alpha_\pi-\alpha_0|$ is equal to twice the distance from $\beta$ to $\pi\Z$, so we now seek to bound this quantity.  Since $\rho_T$ is irreducible, it sends the central element $\mu_T^{pq}\lambda_T$ to $\pm 1$, so that $pq\alpha+\beta \equiv 0 \pmod{\pi}$.  Therefore
\[ \begin{pmatrix} pq & 1 \\ 1 & n \end{pmatrix} \begin{pmatrix} \alpha \\ \beta \end{pmatrix} = \begin{pmatrix} \pi k \\ 2\pi l \end{pmatrix} \]
for some integers $k$ and $l$, which implies that $\beta \in \frac{\pi}{pqn-1}\Z$.  We have already seen that $\beta \not\in \pi\Z$, so we have $|\beta - x| \geq \frac{\pi}{|pqn-1|}$ for any $x \in \pi\Z$.  Thus in the first three cases of \eqref{eq:cable-alpha} we have $|\alpha_\pi-\alpha_0| \geq \frac{2\pi}{|pqn-1|}$, and this clearly holds in the fourth case as well.

Now the image of the characters $[\rho_s]$ in the pillowcase is a line segment of slope $-n$, since $\rho_s(\mu_C^n \lambda_C) = \pm 1$ is constant, so if $r=\frac{c}{d}$ is an $SU(2)$-cyclic slope for $C$ then \cite[Proposition~3.1]{sivek-zentner} says that
\[ \left\lvert r - n \right\rvert < \frac{2\pi/|\alpha_\pi-\alpha_0|}{d} \leq \frac{|pqn-1|}{d} \leq |pq|n+1. \]
Recalling that $|r| \geq (|pq|-1)n^2$, we now have
\[ (|pq|-1)n^2 - n \leq |r-n| < |pq|n+1, \]
so $(|pq|-1)n^2 < (|pq|-1)n+(2n+1)$. But if this holds then $n < 1+\frac{2n+1}{(|pq|-1)n} \leq 1+\frac{3n}{5n} < 2$, since $|pq| \geq 6$, and this is a contradiction.
\end{proof}

We can now show that the list of $SU(2)$-cyclic surgeries we have found so far, on torus knots and cables of torus knots, is complete.

\begin{proof}[Proof of Theorem~\ref{thm:iterated-cables}.]
The classification of $SU(2)$-cyclic surgeries on $n$-fold iterated cables with $n \leq 2$ is carried out in Proposition~\ref{prop:tpq-slopes} and Propositions~\ref{prop:m-not-1} and \ref{prop:m-1}.  It thus remains to be shown that there are no $SU(2)$-cyclic surgeries for $n \geq 3$.

Let $K_j = C_{p_j,q_j}(\dots(C_{p_1,q_1}(U))\dots)$ for each $j=1,\dots,n$, and suppose that $r$-surgery on $K=K_n$ is $SU(2)$-cyclic.  Then $\frac{r}{q_n^2}$-surgery on $K_{n-1}$ is $SU(2)$-cyclic by Proposition~\ref{prop:sz103}, so the theorem will follow by induction if we can prove that there are no $SU(2)$-cyclic surgeries when $n=3$.

Write $K = C_{p_3,q_3}(C_{p_2,q_2}(T_{p_1,q_1}))$, where $|p_1| \neq 1$.  If $r$-surgery on $K$ is $SU(2)$-cyclic, then $\frac{r}{q_3^2}$-surgery on $C_{p_2,q_2}(T_{p_1,q_1})$ is $SU(2)$-cyclic (again by Proposition~\ref{prop:sz103}), so the $n=2$ case of the theorem says that
\[ (p_2,q_2) = (2p_1q_1 + \epsilon, 2), \qquad \frac{r}{q_3^2} \in \{ 4p_1q_1 + \epsilon, 4p_1q_1 + 2\epsilon \} \]
for some $\epsilon = \pm 1$.  In particular, $r$ is an integral multiple of $q_3^2$, and $T_{p_2,q_2}$ is a nontrivial torus knot since $p_2 \neq \pm 1$.

Applying Proposition~\ref{prop:sz102} to the satellite $P(L) = C_{p_3,q_3}(C_{p_2,q_2}(L))$, we see that $r$ is also an $SU(2)$-cyclic slope for $P(U) = C_{p_3,q_3}(T_{p_2,q_2})$.  Since $T_{p_2,q_2}$ is a nontrivial torus knot, we apply the $n=2$ case of the theorem to see that
\[ (p_3,q_3) = (2p_2q_2 + \epsilon', 2), \qquad r \in \{4p_2q_2 + \epsilon', 4p_2q_2 + 2\epsilon'\} \]
for some $\epsilon' = \pm 1$.  Since $q_3 = 2$, it follows from above that $r$ is a multiple of 4, but here we have shown that $r$ is either $\pm 1$ or $\pm 2$ modulo $4$, and so we have a contradiction.
\end{proof}

\section{Dehn surgery and the spliced torus knot complements} \label{sec:elementary-obstruction}

We wish to study the question of which of the $SU(2)$-cyclic manifolds
\[ Y(T_{a,b},T_{c,d}) = E_{a,b} \cup_{T^2} E_{c,d} \]
can be realized by Dehn surgery on a knot in $S^3$.  Here we recall that $E_{a,b}$ and $E_{c,d}$ are the exteriors of the torus knots $T_{a,b}$ and $T_{c,d}$, and the gluing map takes the meridian and Seifert fiber of $E_{a,b}$ to the Seifert fiber and meridian of $E_{c,d}$ respectively.

\subsection{Non-integral surgeries}

First, we will show that Eudave-Mu\~noz's list of half-integral, toroidal surgeries on hyperbolic knots in $S^3$ contains all of the $Y(T_{a,b},T_{c,d})$ which arise as non-integral Dehn surgery on some knot.  

\begin{theorem} \label{thm:y-nonintegral-surgery}
Let $K \subset S^3$ be a knot on which some Dehn surgery of non-integral slope $r$ produces $Y(T_{a,b}, T_{c,d})$.  Then $K$ is an Eudave-Mu\~noz knot and 
\[ S^3_r(K) \cong \pm Y(T_{l,lm-1},T_{2,-(2m-1)}) \]
for some $l$ and $m$.
\end{theorem}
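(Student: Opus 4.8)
The plan is to start from a non-integral surgery $S^3_r(K) \cong Y = Y(T_{a,b},T_{c,d})$ and use the fact, established in Lemma~\ref{lem:splicing-tori}, that $Y$ contains a unique incompressible torus $T = \partial E_{a,b}$ up to isotopy. This torus makes $Y$ toroidal, so the key dichotomy is whether $K$ is hyperbolic or not. First I would dispose of the non-hyperbolic case: if $K$ is a torus knot or a satellite (in particular an iterated torus knot), then its non-integral $SU(2)$-cyclic surgeries are severely constrained by Theorem~\ref{thm:iterated-cables}. Since $Y$ is irreducible, toroidal, and $SU(2)$-cyclic by Lemma~\ref{lem:splicing-cyclic}, I would check that none of the manifolds appearing in the conclusion of Theorem~\ref{thm:iterated-cables} (lens spaces, or connected sums of a lens space with $\RP^3$) is toroidal, and that all the surgery slopes listed there are integral or of the form $pq+\frac1m$ rather than genuinely non-integral in the required sense. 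This rules out torus knots and their iterated cables, leaving the satellite and hyperbolic cases to be separated.

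\emph{Disposing of general satellites.} For a satellite knot $K = P(K')$ with companion $K'$ and winding number $w$, Proposition~\ref{prop:sz103} tells us that if $r$-surgery on $K$ is $SU(2)$-cyclic with $w \neq 0$, then $\frac{r}{w^2}$-surgery on $K'$ is also $SU(2)$-cyclic, while Proposition~\ref{prop:sz102} pushes the slope down to the pattern applied to the unknot. The real content here is that a non-integral toroidal surgery on a \emph{satellite} knot tends to force the companion or pattern to be a torus knot, reducing us to the iterated-cable situation already handled. I would argue that any toroidal non-integral surgery on a satellite either factors through the iterated-torus-knot classification or produces a manifold with too many incompressible tori, contradicting the uniqueness in Lemma~\ref{lem:splicing-tori}. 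In particular, the splicing decomposition of $Y$ has both sides Seifert fibered over the disk, which is a strong rigidity constraint on the companion tori of $K$.

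\emph{The hyperbolic case, which is the main obstacle.} The heart of the argument is when $K$ is hyperbolic. Here I would invoke the Gordon--Luecke classification \cite{gordon-luecke-nonintegral}: any hyperbolic knot in $S^3$ admitting a non-integral toroidal surgery is an Eudave-Mu\~noz knot $k(l,m,n,p)$, and the slope $r$ is forced to be the unique half-integral slope given by Equation~\eqref{eq:r-value}. Since $Y$ is toroidal, the surgery $S^3_r(K) = Y$ is a non-integral toroidal surgery, so $K = k(l,m,n,p)$ and $r$ is half-integral. Now I apply Theorem~\ref{thm:em-classification}: among the Eudave-Mu\~noz knots, the ones whose half-integral toroidal surgery is $SU(2)$-cyclic are exactly $k(l,m,0,0)$, $k(l,m,1,0)$, $k(l,m,0,1)$, and certain $k(l,m,0,p)$. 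Of these, Proposition~\ref{prop:su2-cyclic-but-not-splicing} shows the $p \geq 2$ cases are \emph{not} of the form $Y(T_{a,b},T_{c,d})$, so they are excluded by hypothesis; and the first three families all reduce, via mirroring and Lemma~\ref{lem:em-vs-torus-splicing}, to $Y \cong \pm Y(T_{l,lm-1},T_{2,-(2m-1)})$. The delicate part is confirming that $K$ really must be hyperbolic once the iterated-torus-knot and satellite cases are eliminated, and then tracking the orientation signs through the mirror identifications so that the final expression $\pm Y(T_{l,lm-1},T_{2,-(2m-1)})$ holds with the correct sign conventions.
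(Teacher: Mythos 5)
Your treatment of the torus knot, iterated torus knot, and hyperbolic cases matches the paper: atoroidality of the surgeries in Theorem~\ref{thm:iterated-cables} rules out (iterated) torus knots, and Gordon--Luecke plus Theorem~\ref{thm:em-classification} (with Proposition~\ref{prop:su2-cyclic-but-not-splicing} excluding the $2p-1 \mid l$, $p \neq 0,1$ families) handles hyperbolic $K$. But the satellite case contains a genuine gap. You assert that a non-integral toroidal surgery on a satellite ``either factors through the iterated-torus-knot classification or produces a manifold with too many incompressible tori,'' but this is not an argument, and Theorem~\ref{thm:iterated-cables} does not apply to general satellites --- only to iterated cables of torus knots. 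Moreover, Proposition~\ref{prop:sz103} requires nonzero winding number and in any case only constrains the companion's surgeries, not the structure of the pattern; it cannot by itself exclude, say, a cable of a hyperbolic (e.g.\ Eudave-Mu\~noz) knot, nor a pattern that is not a cable at all.

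The paper closes this gap by writing $K = P(L)$ and splitting on whether the companion torus $T = \partial(S^3 \ssm N(L))$ compresses in $Y$. If $T$ stays incompressible, Lemma~\ref{lem:splicing-tori} identifies it with the splicing torus, so both complementary pieces are Seifert fibered; Moser forces $L$ to be a torus knot, and a theorem of Miyazaki--Motegi forces the pattern of a \emph{non-integral} Seifert fibered surgery to be a cable of a $0$-bridge braid, so $K$ is an iterated torus knot after all, contradiction. If $T$ compresses, the CGLS theorem (since $\Delta(\mu,r) \geq 2$) forces $K$ to be a cable $C_{p,q}(L)$, and Gordon's formula gives $S^3_{r/q^2}(L) \cong Y$ with $r = pq + \tfrac{1}{n}$, $|n| \geq 2$; the induced slope has denominator $|n|q^2 \geq 8$, which rules out $L$ being an Eudave-Mu\~noz knot (those slopes are half-integral), so $L$ is again a satellite, again a cable, and the two cabling slopes produce the arithmetic contradiction $nq(qp'q'-p)=2$ with $|nq| \geq 4$. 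None of this machinery --- the compression dichotomy, Miyazaki--Motegi, CGLS, or the denominator bookkeeping --- appears in your proposal, and without it the satellite case (the bulk of the proof) is unproven. As a minor point, the step you flag as delicate, ``confirming that $K$ really must be hyperbolic,'' is immediate from geometrization once torus knots and satellites are excluded; the delicacy lies entirely in excluding the satellites.
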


\begin{proof}
Let $Y = Y(T_{a,b},T_{c,d})$.  We begin by noting that $K$ cannot be a torus knot or even an iterated torus knot, since by Proposition~\ref{prop:tpq-slopes} and Theorem~\ref{thm:iterated-cables} the $SU(2)$-cyclic surgeries on such $K$ are all lens spaces or connected sums of lens spaces, hence atoroidal.

If $K$ is hyperbolic, then since $Y$ is toroidal, Gordon and Luecke \cite{gordon-luecke-nonintegral} proved that $K$ is one of the Eudave-Mu\~noz knots and $r$ is the corresponding half-integral slope.  In this case $S^3_r(K)$ has the desired form by Theorem~\ref{thm:em-classification}, whose second and third cases are in fact the same as the first up to mirroring $K$ and reversing the orientation of the $r$-surgery manifold.

We may therefore assume from now on that $K$ is a nontrivial satellite.  We will write $K = P(L)$, where $P \subset S^1 \times D^2$ is the pattern and $L \subset S^3$ the companion.  Then $T = \partial(S^3 \ssm N(L))$ is an incompressible, non-boundary-parallel torus in the exterior of $K$.  We will consider two cases separately: either $T$ remains incompressible in $Y = S^3_r(K)$, or it does not.

First, suppose that $T$ is incompressible in $Y = S^3_r(K)$.  Lemma~\ref{lem:splicing-tori} says that it must then be isotopic in $Y$ to $\partial E_{a,b} = \partial E_{c,d}$.  The components of its complement are $S^3 \ssm N(L)$ and the $r$-surgery on $P \subset S^1 \times D^2$, and these must be Seifert fibered since they are identified in some order with the two components of $Y \ssm T$.  The first implies that $L$ is a torus knot \cite{moser}; for the second, a theorem of Miyazaki and Motegi \cite[Theorem~1.2]{miyazaki-motegi-3} says that since $r$ is neither an integer nor $\infty$ and $P$ is not a core of $S^1 \times D^2$, $P$ must be a cable of a 0-bridge braid.  In particular, $K=P(L)$ is an iterated cable of $L$, hence an iterated torus knot, and we have already ruled this out.

Now we know that $T = \partial(S^3 \ssm N(L))$ can be compressed in $Y$.  Then $T$ compresses when the exterior of $K$ is filled with slope either $r$ or the meridian $\mu$, so \cite[Theorem~2.0.1]{cgls} says that either $\Delta(\mu,r) \leq 1$, or $T$ and $\partial(S^3 \ssm N(K))$ cobound a cable space.  The first possibility does not apply since $r$ is non-integral, so $K$ must be a cable of $L$, say $K = C_{p,q}(L)$ with $\gcd(p,q)=1$ and $q \geq 2$, and then $S^3_r(K)$ is described by \cite[Corollary~7.3]{gordon}.  Namely, we have $r \neq pq$ since $r \not\in \Z$; and we cannot have $\Delta(r,pq) > 1$ since then $S^3_r(K)$ would be the union of $S^3 \ssm N(L)$ and a Seifert fiber space with incompressible boundary, in which $T = \partial(S^3 \ssm N(L))$ would be incompressible after all.  Thus $\Delta(r,pq) = 1$, and we have $S^3_{r/q^2}(L) = S^3_r(K) = Y$.

We write $r = pq + \frac{1}{n}$ for some nonzero integer $n$, and note that $n\neq \pm 1$ since $r \not\in \Z$.  The slope $\frac{r}{q^2}$ of the $Y$-surgery on $L$ is $\frac{npq+1}{nq^2}$ in lowest terms, so its denominator has magnitude $|n|q^2 \geq 8$.  This means that $L$ cannot be an Eudave-Mu\~noz knot, so again it is a satellite of some $L'$.  The same argument as above tells us that $L = C_{p',q'}(L')$, where $p',q'$ are relatively prime integers with $q' \geq 2$, and that $\Delta(\frac{r}{q^2}, p'q') = 1$.  Now from $r=pq+\frac{1}{n}$ and $\Delta(\frac{r}{q^2},p'q')=1$ we have an integer $k \neq 0$ such that
\[ \frac{npq+1}{nq^2} = \frac{r}{q^2} = p'q' + \frac{1}{k} = \frac{kp'q'+1}{k}. \]
Both sides are in lowest terms, so $k=\pm nq^2$.  If $k=nq^2$, then this simplifies to $\frac{p}{q} = p'q'$, which is impossible since $\frac{p}{q}\not\in\Z$.  Thus $k=-nq^2$, and multiplying both sides by $nq^2$ gives $npq+1 = nq^2p'q'-1$, or $nq(qp'q'-p) = 2$.  But since $|n|,|q| \geq 2$ we conclude that $2$ is a multiple of $|nq| \geq 4$, and this is a contradiction.
\end{proof}

\begin{remark}
The proof of Theorem~\ref{thm:y-nonintegral-surgery} only used three facts about $Y=Y(T_{a,b},T_{c,d})$: 
\begin{enumerate}
\item $Y$ has a unique incompressible torus up to isotopy,
\item this torus splits $Y$ into two Seifert fibered spaces, and
\item $Y$ is $SU(2)$-cyclic.
\end{enumerate}
We can thus also conclude that if $Y'$ is the half-integral, toroidal $r$-surgery on the Eudave-Mu\~noz knot $K=k(l,m,0,p)$, and if $2p-1$ divides $l$, then $Y'$ can only arise from $r'$-surgery on some knot $K'$ if either $r'$ is integral or $K'$ is an Eudave-Mu\~noz knot and $r'$ is its half-integral, toroidal slope.  Here we apply \cite[Lemma~4.4]{ni-zhang} for the uniqueness of the incompressible torus in $Y'$, and Theorem~\ref{thm:em-classification} to show that $Y'$ is $SU(2)$-cyclic.
\end{remark}

\subsection{Integral surgeries}

Theorem~\ref{thm:y-nonintegral-surgery} asserts that $Y = Y(T_{a,b},T_{c,d})$ cannot result from non-integral Dehn surgery on a knot in $S^3$ unless $Y \cong \pm Y(T_{l,lm-1},T_{2,-(2m-1)})$ for some integers $l$ and $m$.  Most $Y$ do not have this form, and in this section we will show that many of them cannot be produced by integral surgery either, proving that they cannot arise from Dehn surgery on a knot in $S^3$ at all.

Letting $Y=Y(T_{a,b},T_{c,d})$, we recall from Proposition~\ref{prop:linking-form} that
\[ \ell_Y(\mu_{a,b}, \mu_{a,b}) = -\frac{cd}{abcd-1} \qquad\mathrm{and}\qquad \ell_Y(\mu_{c,d}, \mu_{c,d}) = -\frac{ab}{abcd-1}. \]
As another example of a linking form computation, let $n$ be a nonzero integer, and suppose that $Y$ is realized by $n$-surgery on a knot $K \subset S^3$.  Then $H_1(Y)$ is generated by the meridian $\mu_K$, and $Y$ is formed by gluing a solid torus to $S^3 \ssm N(K)$ so that $n\mu_K + \lambda_K$ bounds a disk $D$ in the solid torus.  In particular, $n\mu_K$ bounds the 2-chain $D - \Sigma$, where $\Sigma \subset S^3 \ssm N(K)$ is a Seifert surface for $K$, and so $\ell_Y(\mu_K,\mu_K) = \big((D-\Sigma)\cdot \mu_K\big)/n = -\frac{1}{n}$.

\begin{proposition} \label{prop:ab-residue}
Let $Y=Y(T_{a,b},T_{c,d})$.
\begin{itemize}
\item If $ab$ and $cd$ are not perfect squares modulo $|abcd-1|$, then $Y$ is not the result of $(abcd-1)$-surgery on any knot in $S^3$.
\item If $-ab$ and $-cd$ are not perfect squares modulo $|abcd-1|$, then $Y$ is not the result of $-(abcd-1)$-surgery on any knot in $S^3$.
\end{itemize}
\end{proposition}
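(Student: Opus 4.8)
The plan is to use the linking form of $Y$ as the sole obstruction, exploiting the fact that on a cyclic group the self-linking of a generator is well-defined only up to multiplication by the square of a unit. Concretely, $\ell_Y$ is a nondegenerate symmetric pairing $H_1(Y) \times H_1(Y) \to \Q/\Z$ on the cyclic group $H_1(Y) \cong \Z/(abcd-1)\Z$, so it is determined by its value on any one generator, and changing the generator by a unit $k$ multiplies that value by $k^2$.

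First I would record which classes generate. Since $abcd \equiv 1 \pmod{abcd-1}$, both $ab$ and $cd$ are units modulo $abcd-1$, so by Lemma~\ref{lem:weight-one} each of $[\mu_{a,b}]$ and $[\mu_{c,d}]$ generates $H_1(Y)$; and by Proposition~\ref{prop:linking-form} we have $\ell_Y([\mu_{a,b}],[\mu_{a,b}]) = -\tfrac{cd}{abcd-1}$. Next I would bring in the surgery hypothesis: if $Y$ is $N$-surgery on a knot $K$ with $N = \pm(abcd-1)$, then as computed in the paragraph preceding the proposition, $H_1(Y)$ is generated by $[\mu_K]$ with $\ell_Y([\mu_K],[\mu_K]) = -\tfrac{1}{N}$. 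Since $[\mu_K]$ and $[\mu_{a,b}]$ both generate the cyclic group $H_1(Y)$, there is an integer $k$ coprime to $abcd-1$ with $[\mu_K] = k[\mu_{a,b}]$, and bilinearity gives
\[ -\tfrac{1}{N} \;\equiv\; \ell_Y([\mu_K],[\mu_K]) \;=\; k^2\,\ell_Y([\mu_{a,b}],[\mu_{a,b}]) \;=\; -\tfrac{k^2 cd}{abcd-1} \pmod{\Z}. \]

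Reading this modulo $abcd-1$, the case $N = abcd-1$ yields $k^2 cd \equiv 1$, so $cd \equiv (k^{-1})^2$ is a perfect square, while the case $N = -(abcd-1)$ yields $-k^2 cd \equiv 1$, so $-cd \equiv (k^{-1})^2$ is a perfect square. Taking contrapositives gives the two bullets. To match the symmetric phrasing of the statement I would finally note that $cd \equiv (ab)^{-1}$ and $-cd \equiv (-ab)^{-1} \pmod{abcd-1}$; since the squares among the units form a subgroup, hence are closed under inversion, $cd$ is a square if and only if $ab$ is (and $-cd$ if and only if $-ab$), so the hypotheses "$ab$ and $cd$ are not squares'' and "$-ab$ and $-cd$ are not squares'' are each equivalent to the single nonsquare condition that actually drives the argument.

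I do not expect a real obstacle: the whole proof is a change-of-generator computation for the linking form on a cyclic group. The only points demanding care are verifying that $[\mu_{a,b}]$ (and $[\mu_{c,d}]$) is genuinely a generator, and keeping the sign of $N$ straight in the $\Q/\Z$ arithmetic so that the positive-surgery case produces a square condition on $cd$ and the negative-surgery case one on $-cd$.
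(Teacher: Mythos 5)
Your proposal is correct and follows essentially the same route as the paper: both compare the self-linking of a generator coming from the surgery description, $-\tfrac{1}{N}$, with the value $-\tfrac{cd}{abcd-1}$ from Proposition~\ref{prop:linking-form}, differing only in which generator is expressed in terms of which. Your closing observation that the conditions on $ab$ and $cd$ are equivalent (since they are inverse units modulo $|abcd-1|$) is exactly the content of the remark the paper places immediately after the proposition.
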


\begin{proof}
We prove only the first assertion.  If $Y$ arises from $(abcd-1)$-surgery on a knot $K$, then it has linking form $\ell_Y(x\mu_K,x\mu_K) = -\frac{x^2}{abcd-1}$ for all $x \in \Z/(abcd-1)\Z$.  Taking $x$ so that $x\mu_K = \mu_{a,b}$ (resp.\ $\mu_{c,d}$), Proposition~\ref{prop:linking-form} says that $x^2 \equiv cd$ (resp.\ $ab$) modulo $|abcd-1|$, so $ab$ and $cd$ must be perfect squares modulo $|abcd-1|$, a contradiction.
\end{proof}

\begin{remark}
We observe that $\pm ab$ has inverse $\pm cd$ in $\Z/|abcd-1|\Z$, so $\pm ab$ is a perfect square modulo $|abcd-1|$ if and only if $\pm cd$ is.  Thus in Proposition~\ref{prop:ab-residue} it suffices to check whether $ab$ and $-ab$ are perfect squares modulo $|abcd-1|$.
\end{remark}

As an example, Proposition~\ref{prop:ab-residue} implies the following.
\begin{proposition} \label{prop:p-1-mod-8}
Let $Y = Y(T_{a,b},T_{-a,b})$ for some coprime integers $a,b \geq 2$.  If some odd prime divisor of $(ab)^2+1$ is not 1 modulo 8, then $Y$ is not the result of integral surgery on any knot in $S^3$.
\end{proposition}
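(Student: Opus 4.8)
The plan is to combine the homological constraint on integral surgeries with Proposition~\ref{prop:ab-residue}, reducing the whole statement to a Legendre symbol computation at a single ``bad'' prime. First I would set $N = ab$ and record that for $Y = Y(T_{a,b},T_{-a,b})$ we have $c = -a$, $d = b$, so $cd = -ab = -N$ and $abcd - 1 = -(N^2+1)$. Writing $n = |abcd-1| = N^2+1$, Lemma~\ref{lem:weight-one} gives $|H_1(Y;\Z)| = n$. Since $s$-surgery on a knot in $S^3$ yields a manifold with $|H_1| = |s|$, any knot producing $Y$ by integral surgery must use slope $s = \pm n$; these are exactly the $(abcd-1)$- and $-(abcd-1)$-surgeries treated in Proposition~\ref{prop:ab-residue}. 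By the Remark following that proposition, it therefore suffices to show that neither $ab$ nor $-ab$ is a perfect square modulo $n$, as this rules out both candidate slopes at once.

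Next I would pass from $n$ to a single prime. Let $p$ be the hypothesized odd prime divisor of $(ab)^2 + 1 = n$ with $p \not\equiv 1 \pmod 8$. Since $N^2 \equiv -1 \pmod p$, the residue $-1$ is a square mod $p$, which forces $p \equiv 1 \pmod 4$; combined with $p \not\equiv 1 \pmod 8$ this gives $p \equiv 5 \pmod 8$. (Note also $p \nmid N$, since $p \mid N^2+1$, so $N$ is a unit mod $p$.) Because any square modulo $n$ reduces to a square modulo $p$, it is enough to prove that both $ab$ and $-ab$ are quadratic nonresidues mod $p$.

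The heart of the argument is then Euler's criterion. Using $N^2 \equiv -1 \pmod p$ and that $(p-1)/4$ is odd (as $p \equiv 5 \pmod 8$), I would compute
\[ \left(\tfrac{ab}{p}\right) \equiv (ab)^{(p-1)/2} = \big((ab)^2\big)^{(p-1)/4} \equiv (-1)^{(p-1)/4} = -1 \pmod p, \]
so $ab$ is a nonresidue mod $p$. Since $p \equiv 1 \pmod 4$ gives $\left(\tfrac{-1}{p}\right) = 1$, multiplicativity of the Legendre symbol yields $\left(\tfrac{-ab}{p}\right) = \left(\tfrac{-1}{p}\right)\left(\tfrac{ab}{p}\right) = -1$ as well. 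Hence both $ab$ and $-ab$ fail to be squares mod $p$, and a fortiori mod $n$, so Proposition~\ref{prop:ab-residue} excludes both the $n$- and $(-n)$-surgeries and the proof is complete.

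I do not expect a serious obstacle: all of the topology is already packaged into Proposition~\ref{prop:ab-residue} and the elementary homological slope constraint, leaving only a routine number-theoretic computation. The one point demanding care is the sign bookkeeping, namely confirming that $abcd - 1 = -(N^2+1)$ so that the two cases of Proposition~\ref{prop:ab-residue} match the two integer slopes $\pm n$, and that the accompanying Remark legitimately reduces checking $\pm ab$ and $\pm cd$ to checking just $ab$ and $-ab$ modulo $n$.
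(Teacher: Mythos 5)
Your proof is correct and follows essentially the same route as the paper: reduce via Proposition~\ref{prop:ab-residue} to showing that $ab$ and $-ab$ are nonsquares modulo $(ab)^2+1$, then work modulo the bad prime $p$. The only cosmetic difference is in the final step, where the paper observes that a hypothetical square root of $\pm ab$ would have order $8$ in $\F_p^\times$ and hence force $p \equiv 1 \pmod{8}$, whereas you first pin down $p \equiv 5 \pmod{8}$ and apply Euler's criterion; the two computations are equivalent.
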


\begin{proof}
Write $n=ab$ for convenience.  The slope of such a surgery would have to be $\pm|H_1(Y)| = \pm(n^2+1)$, so by Proposition~\ref{prop:ab-residue}, it suffices to show that $n$ and $-n$ are not perfect squares modulo $n^2+1$.

Let $p$ be an odd prime dividing $n^2+1$, and suppose that $p \not\equiv 1 \pmod{8}$.  If we can find $x$ such that $x^2 \equiv \pm n \pmod{n^2+1}$, then $x^4 \equiv n^2 \equiv -1 \pmod{n^2+1}$ and hence $x^4 \equiv -1 \pmod{p}$.  It follows that $x^8 \equiv 1 \pmod{p}$ and hence that $x$ has order 8 in the cyclic group $\F_p^\times$ of order $p-1$, so $p\equiv 1 \pmod{8}$, which is a contradiction.
\end{proof}

\begin{theorem} \label{thm:ab-ab}
Let $T_{a,b}$ be a nontrivial torus knot, with $a,b > 2$.  If $ab$ does not belong to the set
\[ S = \{n \in \N \mid p\equiv1\!\!\!\!\pmod{8} \textrm{ for all odd primes } p \mid n^2+1\}, \]
which has density zero in $\N$, then $Y(T_{a,b},T_{-a,b})$ is not Dehn surgery on any knot in $S^3$.
\end{theorem}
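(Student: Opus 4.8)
The plan is to dispose of the non-integral and integral surgery cases separately and then verify the density statement by a sieve argument. Writing $Y = Y(T_{a,b},T_{-a,b})$, note that here $cd = -ab$, so $|H_1(Y;\Z)| = |abcd-1| = (ab)^2+1$.

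For the non-integral case, I would invoke Theorem~\ref{thm:y-nonintegral-surgery}: if $Y$ were produced by a surgery of non-integral slope, then $Y \cong \pm Y(T_{l,lm-1},T_{2,-(2m-1)})$ for some integers $l,m$. I would rule this out using the uniqueness of the incompressible torus. By Lemma~\ref{lem:splicing-tori} the splitting torus is unique up to isotopy in each of these graph manifolds, so any homeomorphism $Y \cong \pm Y(T_{l,lm-1},T_{2,-(2m-1)})$ carries the splitting torus of the source to that of the target and hence restricts to homeomorphisms of the complementary pieces. Thus the unordered pair of Seifert fibered exteriors $\{E_{a,b},E_{-a,b}\}$ would agree with $\{E_{l,lm-1},E_{2,-(2m-1)}\}$. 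But a torus knot exterior $E_{p,q}$ is Seifert fibered over the disk with exceptional fibers of orders $|p|$ and $|q|$, and these orders are a homeomorphism invariant; for our manifold both pieces have fiber orders $\{a,b\}$ with $a,b>2$, whereas $E_{2,-(2m-1)}$ has a fiber of order $2$. This contradiction shows that no non-integral surgery yields $Y$.

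For the integral case, the slope of any integral surgery producing $Y$ must be $\pm|H_1(Y;\Z)| = \pm((ab)^2+1)$. The hypothesis $ab \notin S$ says precisely that some odd prime divisor of $(ab)^2+1$ is not congruent to $1 \pmod 8$, which is exactly the hypothesis of Proposition~\ref{prop:p-1-mod-8}. Applying that proposition (with $n = ab$, using $\gcd(a,b)=1$ and $a,b \geq 2$) immediately shows $Y$ is not integral surgery on any knot, and combining the two cases proves the surgery obstruction.

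It remains to prove that $S$ has density zero, and this is where the most care is needed. Since $-1$ is a quadratic residue modulo an odd prime $p$ exactly when $p \equiv 1 \pmod 4$, every odd prime divisor of $n^2+1$ is $\equiv 1$ or $5 \pmod 8$; hence $n \in S$ if and only if no prime $p \equiv 5 \pmod 8$ divides $n^2+1$. For a fixed such prime $p$, the congruence $x^2 \equiv -1 \pmod p$ has exactly two solutions, so the set of $n$ with $p \nmid n^2+1$ has density $1 - \tfrac{2}{p}$; by the Chinese Remainder Theorem, for any finite set $P$ of primes $\equiv 5 \pmod 8$ the density of $n$ avoided by all of $P$ equals $\prod_{p \in P}\left(1 - \tfrac{2}{p}\right)$. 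Since $S$ is contained in each such set, its upper density is at most $\prod_{p\in P}\left(1-\tfrac{2}{p}\right)$. Finally, Dirichlet's theorem gives that $\sum_{p \equiv 5\,(8)} \tfrac{1}{p}$ diverges, so $\prod_{p\equiv 5\,(8)}\left(1 - \tfrac{2}{p}\right) = 0$; letting $P$ exhaust the primes $\equiv 5 \pmod 8$ forces the upper density of $S$ to be $0$. The main obstacle is precisely this last divergence estimate, which relies on the arithmetic input (Dirichlet's theorem together with the exact two-solution count), whereas the topological reductions are immediate consequences of the cited results.
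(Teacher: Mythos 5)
Your proposal is correct and follows essentially the same route as the paper: Theorem~\ref{thm:y-nonintegral-surgery} to exclude non-integral slopes, Proposition~\ref{prop:p-1-mod-8} (via Proposition~\ref{prop:ab-residue}) for integral slopes, and the same Chinese-Remainder sieve with $\sum_{p \equiv 5 \pmod 8} \tfrac{1}{p} = \infty$ for the density-zero claim. The only addition is that you spell out, via Lemma~\ref{lem:splicing-tori} and the exceptional fiber orders of the two Seifert pieces, why $a,b>2$ prevents $Y$ from being $\pm Y(T_{l,lm-1},T_{2,-(2m-1)})$ --- a detail the paper leaves implicit, and your justification of it is sound.
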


\begin{proof}
The condition $a,b > 2$ guarantees by Theorem~\ref{thm:y-nonintegral-surgery} that $Y = Y(T_{a,b},T_{-a,b})$ cannot arise from non-integral surgery on a knot in $S^3$.  If $ab \not\in S$ then Proposition~\ref{prop:p-1-mod-8} rules out integral surgeries as well, so it remains to be shown that $S$ has density zero.

If some integer $n$ belongs to $S$, then we must have $n^2 \not\equiv -1 \pmod{p}$ for every prime $p \equiv 5\pmod{8}$.  Letting $p_1,\dots,p_k$ be the first $k$ primes congruent to $5\pmod{8}$, we define
\[ S_k = \{n \geq 1 \mid n^2 \not\equiv -1\!\!\!\pmod{p_i} \textrm{ for }i=1,\dots,k\} \]
and note that $S \subset S_k$.  Since $-1$ is a quadratic residue (and thus has exactly two square roots) modulo each $p_i$, it is straightforward to show that 
\begin{equation} \label{eq:s_k-density}
\lim_{n\to\infty} \frac{|S_k \cap \{1,2,\dots,n\}|}{n} = \prod_{i=1}^k \left(1 - \frac{2}{p_i}\right) < \exp\left(-\sum_{i=1}^k \frac{2}{p_i}\right).
\end{equation}
Indeed, if we let $N = p_1p_2\dots p_k$ and write $mN \leq n < (m+1)N$ for some integer $m$, then
\begin{align*}
\frac{|S_k \cap \{1,2,\dots,n\}|}{n} &\geq \frac{|S_k \cap \{1,2,\dots,mN\}|}{(m+1)N} \\
&= \frac{m\prod_{i=1}^k (p_i-2)}{(m+1)N} = \frac{m}{m+1} \prod_{i=1}^k \left(1-\frac{2}{p_i}\right), 
\end{align*}
and similarly
\[ \frac{|S_k \cap \{1,2,\dots,n\}|}{n} \leq \frac{|S_k \cap \{1,2,\dots,(m+1)N\}|}{mN} \leq \frac{m+1}{m} \prod_{i=1}^k\left(1-\frac{2}{p_i}\right), \]
establishing the left side of \eqref{eq:s_k-density}.  The right side of \eqref{eq:s_k-density} then comes from the inequality $1-x < e^{-x}$ for all $x > 0$.  In particular, since $S \subset S_k$ it follows that
\[ 0 \leq \limsup_{n\to\infty} \frac{|S \cap \{1,2,\dots,n\}|}{n} \leq \exp\left(-\sum_{i=1}^k \frac{2}{p_i}\right), \]
and the sum $\sum_{i=1}^k \frac{2}{p_i}$ goes to infinity as $k$ does, since the sum $\sum \frac{1}{p}$ over all primes is unbounded and (by Dirichlet's theorem) the primes $p\equiv 5\pmod{8}$ have density $\frac{1}{4}$ in the set of all primes.  We conclude that the desired limit exists and is zero.
\end{proof}

\begin{theorem} \label{thm:ab+ab}
Let $T_{a,b}$ be a nontrivial torus knot.  If $|ab|$ does not belong to the set
\[S' = \{n \in \N \mid p\not\equiv 3\!\!\!\!\pmod{4} \textrm{ for all primes } p \mid n-1 \textrm{ \textit{or} for all } p \mid n+1\}, \]
of density zero in $\N$, then $Y(T_{a,b},T_{a,b})$ is not the result of Dehn surgery on any knot in $S^3$.
\end{theorem}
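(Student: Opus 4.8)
My plan is to mirror the proof of Theorem~\ref{thm:ab-ab}, separating non-integral from integral slopes and closing with a density estimate. Throughout set $n=|ab|$, so that $|H_1(Y)|=(ab)^2-1=n^2-1$ by Lemma~\ref{lem:weight-one}. First I would rule out non-integral surgeries using Theorem~\ref{thm:y-nonintegral-surgery}, which asserts that $Y=Y(T_{a,b},T_{a,b})$ can arise from a non-integral surgery only if $Y\cong\pm Y(T_{l,lm-1},T_{2,-(2m-1)})$. Both Seifert pieces of $Y$ are copies of $E_{a,b}$, so by the uniqueness of the incompressible torus (Lemma~\ref{lem:splicing-tori}) any such homeomorphism would identify $\{E_{a,b},E_{a,b}\}$ with $\{E_{l,lm-1},E_{2,|2m-1|}\}$, forcing $E_{l,lm-1}\cong E_{2,|2m-1|}\cong E_{a,b}$ and hence one of $a,b$ to equal $\pm2$. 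If $a,b>2$ this is already absurd; in the remaining cases a short comparison of the orders of $H_1$—which equals $n^2-1$ for $Y$ but takes a different value (typically $4b^2+1$) for every Eudave-Mu\~noz manifold whose two Seifert pieces coincide—rules out the isomorphism. So no non-integral surgery produces $Y$.

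Next I would treat integral surgeries, whose only possible slopes are $\pm(n^2-1)$ since integral surgery yields $|H_1|=|r|$. By Proposition~\ref{prop:ab-residue} and the remark following it (note $cd=ab$ here), it suffices to show that neither $n$ nor $-n$ is a square modulo $n^2-1$. Writing $n^2-1=(n-1)(n+1)$, each odd prime divides exactly one factor; since $n\equiv1\pmod{n-1}$ and $n\equiv-1\pmod{n+1}$, a square root of $n$ exists automatically modulo each odd prime power dividing $n-1$, and modulo one dividing $n+1$ precisely when $-1$ is a square there, i.e.\ when that prime is $\equiv1\pmod4$. Symmetrically, $-n$ can be a square modulo $n^2-1$ only if every odd prime dividing $n-1$ is $\equiv1\pmod4$. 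The hypothesis $n\notin S'$ says exactly that some prime $\equiv3\pmod4$ divides $n-1$ \emph{and} some prime $\equiv3\pmod4$ divides $n+1$, so both $n$ and $-n$ fail to be squares and every integral slope is excluded. (The shared factor of $2$ in $n\pm1$ for odd $n$ contributes a further $2$-adic obstruction, but the odd-prime condition alone already suffices.)

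Finally, to see that $S'$ has density zero, I would note that $S'\subseteq\{m:m-1\in A\}\cup\{m:m+1\in A\}$, where $A$ is the set of positive integers with no prime factor congruent to $3\pmod4$. Exactly the sieve of \eqref{eq:s_k-density}, now applied to the first $k$ primes $p_i\equiv3\pmod4$, bounds the density of $A$ by $\prod_{i=1}^k(1-\tfrac1{p_i})<\exp(-\sum_{i=1}^k\tfrac1{p_i})$; since $\sum_{p\equiv3(4)}\tfrac1p$ diverges by Dirichlet's theorem, $A$ and its two translates all have density zero, and hence so does $S'$. The genuinely new content is the number-theoretic step of the previous paragraph—translating the linking-form obstruction into a residue condition and verifying that $n\notin S'$ forces both $\pm n$ to be quadratic non-residues modulo $n^2-1$ across the even/odd dichotomy—while the density bound copies Theorem~\ref{thm:ab-ab} almost verbatim and the non-integral exclusion is a short homology check.
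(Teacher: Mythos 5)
Your proposal is correct and follows essentially the same route as the paper: non-integral slopes are excluded via Theorem~\ref{thm:y-nonintegral-surgery}, integral slopes $\pm(n^2-1)$ via Proposition~\ref{prop:ab-residue} together with the observation that $n\equiv -1\pmod{n+1}$ and $-n\equiv -1\pmod{n-1}$ force primes dividing $n\pm1$ to be $1\pmod 4$, and the density-zero claim via the same sieve as in Theorem~\ref{thm:ab-ab}. The only difference is that you spell out (correctly, via the unique incompressible torus and a comparison of $|H_1|$) why $Y(T_{a,b},T_{a,b})$ is never of the form $\pm Y(T_{l,lm-1},T_{2,-(2m-1)})$, a step the paper leaves implicit.
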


\begin{proof}
Theorem~\ref{thm:y-nonintegral-surgery} says that $Y = Y(T_{a,b},T_{a,b})$ cannot arise from non-integral surgery.  By Proposition~\ref{prop:ab-residue}, if it is an integral surgery on some knot, then either $ab$ or $-ab$ must be a perfect square modulo $(ab)^2-1$.  

Let $n=|ab|$.  If $n$ is a square modulo $n^2-1$, then $-1 \equiv n \pmod{n+1}$ is a quadratic residue, hence $-1$ is a square modulo every prime $p$ dividing $n+1$ and so no such $p$ can be $3\pmod{4}$.  Similarly, if $-n$ is a square modulo $n^2-1$, then $-1 \equiv -n \pmod{n-1}$ is a quadratic residue and so no prime $p \mid n-1$ can be $3\pmod{4}$.  Thus if $Y(T_{a,b},T_{a,b})$ is integral surgery on some knot then $n \in S'$, as claimed.

It remains to be seen that $S'$ has density zero in $\N$.  For this, it suffices to prove that the set
\[ T = \{n \in \N \mid p \not\equiv 3\!\!\!\!\!\pmod{4} \textrm{ for all primes } p \mid n\} \]
has density zero, since then
\[ S' = \{ n \geq 2 \mid n-1 \in T \textrm{ or } n+1 \in T \} \]
satisfies
\[ \frac{|S' \cap \{1,2\dots,n\}|}{n} \leq \frac{2|T \cap \{1,2,\dots,n+1\}|}{n} \to 0 \]
as $n \to \infty$.

We now follow the same reasoning as in the proof of Theorem~\ref{thm:ab-ab}.  We let $p_1,\dots,p_k$ denote the first $k$ primes which are congruent to $3\pmod{4}$, and let $T_k \subset \N$ consist of all integers which are not multiples of any of $p_1,\dots,p_k$.  Then 
\[ \lim_{n\to\infty} \frac{|T_k \cap \{1,2,\dots,n\}|}{n} = \prod_{i=1}^k \left(1-\frac{1}{p_i}\right) < \exp\left(-\sum_{i=1}^k \frac{1}{p_i}\right) \]
just as before.  Since $T \subset T_k$ and $\sum_{i=1}^k \frac{1}{p_i}$ is unbounded as $k \to \infty$, we conclude again that $T$ must have density zero.
\end{proof}

\section{Changemakers and integral surgeries} \label{sec:changemakers}

We now consider a different approach to the question of when $Y(T_{a,b},T_{c,d})$ is the result of integral surgery on some knot in $S^3$.  Our motivation comes from the family of $3$-manifolds
\[ Y=Y(T_{2,2a+1},T_{2,2b+1}), \qquad a,b \geq 1, \]
whose first homology has order $n = 4(2a+1)(2b+1)-1$.  We can show using linking forms that $Y$ is never $n$-surgery on a knot in $S^3$ -- this is the content of Proposition~\ref{prop:2m-square} below, in combination with Proposition~\ref{prop:ab-residue} -- but these do not suffice to rule out $Y$ as $-n$-surgery on some knot, and so something stronger is needed.  In fact, we will prove the following.

\begin{theorem} \label{thm:2-2a+1-2-2b+1}
Suppose for some positive integers $a \leq b$ that $Y = Y(T_{2,2a+1},T_{2,2b+1})$ is Dehn surgery on a knot in $S^3$.  Then 
\[ (a,b) \in \{ (1,1),(1,2),(1,3),(2,3),(3,3) \}, \]
and the slope is $-n$ where $n = |H_1(Y;\Z)| = 4(2a+1)(2b+1)-1$.
\end{theorem}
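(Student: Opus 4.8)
The plan is to first pin down the surgery slope and then bring in Greene's changemaker obstruction for the one remaining case. By Lemma~\ref{lem:weight-one} we have $|H_1(Y;\Z)| = n = 4(2a+1)(2b+1)-1$, so any knot surgery producing $Y$ has slope of numerator $\pm n$; if it is integral the slope is exactly $\pm n$. I would first dispatch the non-integral case using Theorem~\ref{thm:y-nonintegral-surgery}, which would force $Y \cong \pm Y(T_{l,lm-1}, T_{2,-(2m-1)})$. Any such homeomorphism must match up the two Seifert-fibered JSJ pieces by Lemma~\ref{lem:splicing-tori}, so the exterior of $T_{l,lm-1}$ would have to be that of a $(2,\mathrm{odd})$ torus knot. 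A short check on the orders of its exceptional fibers forces $|l| = 2$ (the alternative $|lm-1| = 2$ only yields a trivial component or a repeat of the $|l|=2$ family), and in every such case one computes $|H_1| \equiv 1 \pmod 4$. Since $n \equiv 3 \pmod 4$, this is a contradiction, so no non-integral surgery occurs.

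Next I would exclude the slope $+n$ for all $(a,b)$. Writing $Y = Y(T_{A,B},T_{C,D})$ with $(A,B,C,D) = (2,2a+1,2,2b+1)$, we have $AB\cdot CD = n+1 \equiv 1 \pmod n$, so (by the remark following Proposition~\ref{prop:ab-residue}) $AB$ is a square modulo $n$ if and only if $CD$ is. Proposition~\ref{prop:2m-square} shows that $2(2a+1)$ is \emph{not} a square modulo $n$, hence neither $AB$ nor $CD$ is, and the first bullet of Proposition~\ref{prop:ab-residue} then rules out $(+n)$-surgery. This reduces the entire theorem to the following: determine for which $(a,b)$ the manifold $Y$ can arise as $(-n)$-surgery on a knot $K \subset S^3$.

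For the remaining slope I would set up the changemaker obstruction. Since $n$ is odd, $Y$ is the double branched cover of an alternating knot by \cite{zentner-simple}; in particular $Y$ is a Heegaard Floer L-space, and its mirror $-Y = S^3_n(\overline{K})$ is the double branched cover of an alternating link and so bounds a \emph{sharp} negative-definite $4$-manifold $W$ (Ozsv\'ath--Szab\'o), whose intersection lattice $\Lambda_{a,b}$ is the corresponding Goeritz lattice and has discriminant $n$. Greene's changemaker theorem \cite{greene} then applies to $S^3_n(\overline{K})$: there is a changemaker vector $\sigma = (\sigma_0 \le \sigma_1 \le \dots \le \sigma_N) \in \Z^{N+1}$ of norm $|\sigma|^2 = n$, satisfying the changemaker inequalities $\sigma_i \le 1 + \sigma_0 + \dots + \sigma_{i-1}$, such that $\Lambda_{a,b} \cong \langle \sigma \rangle^{\perp}$ inside the standard definite lattice $\Z^{N+1}$. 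The key preliminary computation is to identify $\Lambda_{a,b}$ explicitly from the continued-fraction data of the torus knots $T_{2,2a+1}$ and $T_{2,2b+1}$, so that the self-pairings predicted by Proposition~\ref{prop:linking-form} are visible at the lattice level.

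The hard part will be the final step: turning the embedding $\Lambda_{a,b} \cong \langle\sigma\rangle^\perp$ into explicit Diophantine constraints and solving them. The changemaker condition is extremely rigid, and I would use it to show that all but finitely many $(a,b)$ fail to admit such an embedding — the inequalities on $\sigma$ together with the prescribed form of $\Lambda_{a,b}$ should bound $a$ and $b$ — and then verify by a finite (but delicate) case analysis that exactly the pairs $(1,1),(1,2),(1,3),(2,3),(3,3)$ survive as candidates. Combined with the first two steps, this shows that any surgery presentation of $Y$ must have integral slope $-n$ and must have $(a,b)$ among these five pairs, which is the assertion of the theorem. I expect this lattice-embedding enumeration, rather than the reductions to slope $-n$, to be the genuine obstacle, since the asymmetric-looking final list reflects fine arithmetic of the changemaker condition rather than any clean congruence.
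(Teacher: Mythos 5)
Your reductions for the non-integral and $+n$ cases are correct and are exactly the paper's: Theorem~\ref{thm:y-nonintegral-surgery} plus the observation that $|H_1| \equiv 3 \pmod 4$ here while it is $\equiv 1 \pmod 4$ for any candidate $\pm Y(T_{l,lm-1},T_{2,-(2m-1)})$ kills non-integral slopes, and Proposition~\ref{prop:2m-square} fed into Proposition~\ref{prop:ab-residue} (via the remark that $ab$ is a square mod $|abcd-1|$ iff $cd$ is) kills $+n$. Your plan for $-n$ is also the paper's: realize $Y$ as $\Sigma(L(T_{2,2a+1},T_{2,2b+1}))$, take the Goeritz lattice of the black graph as the intersection form of a sharp $4$-manifold bounded by $-Y$, and apply Greene's Theorem~\ref{thm:changemaker}.

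The genuine gap is in the step you yourself flag as the obstacle: you assert that the changemaker inequalities ``should bound $a$ and $b$'' but give no mechanism, and the fine arithmetic of lattice embeddings is not where the bound comes from. The paper's point is much blunter: in the alternating diagram of $L(T_{2,2a+1},T_{2,2b+1})$ the black graph has exactly six vertices \emph{for every} $a,b$ (the continued fractions $\frac{2a+1}{2}=a+\frac12$ and $\frac{2b+1}{2}=b+\frac12$ have length two, so growing $a$ and $b$ only deepens two twist regions without adding black regions), so the Goeritz lattice has rank $5$ and the changemaker $\sigma$ lives in $\Z^6$. The elementary bound $\sigma_i \le 2^i$ (Lemma~\ref{lem:changemaker-bound}) then forces $n = |\langle\sigma,\sigma\rangle| \le \frac{1}{3}(4^6-1) = 1365$, i.e.\ $(2a+1)(2b+1) \le 341$, and the remaining finitely many cases are settled by a computer search against the explicit $5\times 5$ Goeritz matrix (Propositions~\ref{prop:goeritz-bound} and \ref{prop:22-negative-integer}). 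Without identifying that the rank is uniformly bounded, your ``all but finitely many $(a,b)$ fail'' step has no argument behind it, and a purely Diophantine attack on the embedding $\Lambda_{a,b}\hookrightarrow\langle\sigma\rangle^\perp$ with $a,b$ unbounded would be much harder than what is actually needed.
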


\begin{proof}
Theorem~\ref{thm:y-nonintegral-surgery} says that $Y$ cannot be the result of non-integral surgery, so we need only rule out surgeries of slope $\pm n$.  We will show in Proposition~\ref{prop:2m-square} that $2(2a+1)$ is not a quadratic residue modulo $4(2a+1)(2b+1)-1$, so Proposition~\ref{prop:ab-residue} says that $Y$ cannot be $n$-surgery on a knot either.  Then in \S\ref{ssec:changemakers} we will review Greene's changemaker obstruction \cite{greene} and use it to prove Proposition~\ref{prop:22-negative-integer}, asserting that $Y$ cannot be $-n$-surgery either except possibly in the five cases above.
\end{proof}

\subsection{Positive integral surgeries} \label{ssec:residue-prop}

In this subsection we prove the following number-theoretic result, which is used in the proof of Theorem~\ref{thm:2-2a+1-2-2b+1} with $m=2a+1$ and $n=2b+1$ to rule out positive integral surgeries.

\begin{proposition} \label{prop:2m-square}
Let $m,n \geq 1$ be odd integers.  Then $2m$ is not a perfect square modulo $4mn-1$.
\end{proposition}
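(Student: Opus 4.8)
The plan is to reduce the whole statement to a single Jacobi symbol computation. Writing $N = 4mn-1$, I would first record that $N$ is coprime to $2m$ (indeed $N \equiv -1 \pmod m$, so $\gcd(m,N)=1$, and $N$ is odd), so the Jacobi symbol $\left(\frac{2m}{N}\right)$ is defined; and that $N \equiv 3 \pmod 8$, because $m,n$ odd forces $mn$ odd and hence $4mn \equiv 4 \pmod 8$. The crucial fact I would invoke is the asymmetry of the Jacobi symbol as an obstruction: if $\left(\frac{a}{N}\right) = -1$, then $a$ cannot be a square modulo $N$, since the Jacobi symbol factors as a product of Legendre symbols $\left(\frac{a}{p}\right)$ over the primes $p \mid N$, so a total value of $-1$ forces $a$ to be a non-residue modulo at least one such $p$, hence modulo $N$. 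It therefore suffices to show $\left(\frac{2m}{N}\right) = -1$.

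The computation then splits as $\left(\frac{2m}{N}\right) = \left(\frac{2}{N}\right)\left(\frac{m}{N}\right)$. For the first factor, $N \equiv 3 \pmod 8$ gives $\left(\frac{2}{N}\right) = -1$ by the supplementary law. For the second, I would apply quadratic reciprocity for the Jacobi symbol, $\left(\frac{m}{N}\right) = (-1)^{\frac{m-1}{2}\cdot\frac{N-1}{2}}\left(\frac{N}{m}\right)$. Since $\frac{N-1}{2} = 2mn-1$ is odd, the reciprocity sign collapses to $(-1)^{\frac{m-1}{2}}$, while $\left(\frac{N}{m}\right) = \left(\frac{-1}{m}\right) = (-1)^{\frac{m-1}{2}}$ because $4mn \equiv 0 \pmod m$. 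Multiplying these two equal signs yields $\left(\frac{m}{N}\right) = (-1)^{m-1} = 1$, using that $m$ is odd. Hence $\left(\frac{2m}{N}\right) = (-1)(1) = -1$, completing the argument.

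The individual steps are all routine once this setup is in place; the one genuinely load-bearing observation I would take care to state explicitly is precisely the asymmetry noted above, namely that $\left(\frac{2m}{N}\right) = -1$ \emph{does} rule out $2m$ being a square modulo $N$, whereas a value of $+1$ would be inconclusive. The only real ``luck'' in the problem is that the computation lands on $-1$ rather than $+1$; everything else is bookkeeping with reciprocity and the formula for $\left(\frac{2}{N}\right)$, all of which hinges on the two congruences $N \equiv 3 \pmod 8$ and $N \equiv -1 \pmod m$.
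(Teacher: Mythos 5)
Your proof is correct, but it takes a genuinely different and noticeably shorter route than the paper. The paper avoids the Jacobi symbol altogether: it uses Dirichlet's theorem to define a character $\chi_{8m}$ on $(\Z/8m\Z)^\times$ by $\chi_{8m}(a) = \legendre{2m}{p}$ for any prime $p \equiv a \pmod{8m}$, then proves in three separate lemmas (each an exercise in quadratic reciprocity for Legendre symbols) that this is well defined, that it is a homomorphism, and that $\chi_{8m}(4m-1) = -1$; the proposition follows because $4mn-1 \equiv 4m-1 \pmod{8m}$ would otherwise lie in $\ker(\chi_{8m})$. Your $\chi_{8m}$-free argument recognizes that this character is nothing but the Jacobi symbol $a \mapsto \legendre{2m}{a}$ in its lower argument, and so you can compute $\legendre{2m}{N} = \legendre{2}{N}\legendre{m}{N} = (-1)(+1) = -1$ directly from $N \equiv 3 \pmod 8$ and $N \equiv -1 \pmod m$ via Jacobi reciprocity, with no appeal to Dirichlet's theorem and no need to verify multiplicativity by hand. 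You also correctly flag the one logical subtlety, namely that a Jacobi symbol of $-1$ (unlike $+1$) genuinely certifies non-residuosity, since the product of $\legendre{2m}{p}^{e_p}$ over the prime factorization of $N$ can only equal $-1$ if some individual Legendre symbol is $-1$. The computations all check: $\gcd(2m,N)=1$, $\tfrac{N-1}{2}=2mn-1$ is odd so the reciprocity sign is $(-1)^{(m-1)/2}$, and $\legendre{N}{m} = \legendre{-1}{m} = (-1)^{(m-1)/2}$, so the two signs cancel. What the paper's longer construction buys is self-containedness at the level of Legendre symbols only; what yours buys is brevity and the elimination of Dirichlet's theorem as an ingredient.
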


Throughout this subsection we use $\legendre{a}{p}$ to denote the Legendre symbol, which for any prime number $p$ and integer $a$ coprime to $p$ is equal to $+1$ if $a$ is a square $\pmod{p}$ and $-1$ otherwise.  For odd primes $p$ and $q$ we also define
\[ \epsilon_{p,q} = (-1)^{\frac{p-1}{2} \cdot \frac{q-1}{2}}, \]
so that the law of quadratic reciprocity states that $\legendre{p}{q} = \epsilon_{p,q} \legendre{q}{p}$.

\begin{lemma} \label{lem:2m-legendre-symbol}
Let $p,q$ be primes which do not divide $2m$, where $m$ is odd, and which satisfy $p\equiv q \pmod{8m}$.  Then $\legendre{2m}{p} = \legendre{2m}{q}$.
\end{lemma}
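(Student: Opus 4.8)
The plan is to exploit the fact that each factor of $\legendre{2m}{p}$ is governed by a congruence condition on $p$ whose modulus divides $8m$. First I would use the multiplicativity of the Legendre symbol in its numerator to write
\[ \legendre{2m}{p} = \legendre{2}{p}\legendre{m}{p} = \legendre{2}{p}\prod_i \legendre{r_i}{p}^{a_i}, \]
where $m = \prod_i r_i^{a_i}$ is the prime factorization of the odd integer $m$; the same identity holds with $q$ in place of $p$. It therefore suffices to match each factor separately, that is, to show $\legendre{2}{p} = \legendre{2}{q}$ and $\legendre{r}{p} = \legendre{r}{q}$ for every prime $r \mid m$. Note that $r \nmid p$ and $r \nmid q$ since $p,q \nmid 2m$, so all of these symbols are defined.

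The factor of $2$ is immediate from the second supplementary law: $\legendre{2}{p} = (-1)^{(p^2-1)/8}$ depends on $p$ only through its residue modulo $8$, and since $8 \mid 8m$, the hypothesis $p \equiv q \pmod{8m}$ gives $p \equiv q \pmod{8}$. For each odd prime $r \mid m$, I would apply quadratic reciprocity to write $\legendre{r}{p} = \epsilon_{r,p}\legendre{p}{r}$, and likewise for $q$, and then check that neither factor changes when $p$ is replaced by $q$. The sign $\epsilon_{r,p} = (-1)^{\frac{r-1}{2}\cdot\frac{p-1}{2}}$ depends on $p$ only through $\frac{p-1}{2} \bmod 2$, hence only through $p \bmod 4$, and $4 \mid 8m$ gives $p \equiv q \pmod{4}$; meanwhile $\legendre{p}{r}$ depends on $p$ only modulo $r$, and $r \mid m \mid 8m$ gives $p \equiv q \pmod{r}$. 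Thus $\legendre{r}{p} = \legendre{r}{q}$, and assembling all the factors yields $\legendre{2m}{p} = \legendre{2m}{q}$.

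The argument is elementary and I expect no genuine obstacle; the only point that needs care is the reduction to the prime factors of $m$, since the excerpt states quadratic reciprocity only for odd \emph{primes} rather than for the Jacobi symbol. Factoring over the primes $r_i \mid m$, instead of invoking a Jacobi-symbol reciprocity law for the composite modulus $m$ directly, keeps the proof within the stated machinery and makes transparent how the modulus $8m$ decomposes: the factor $8$ controls both the supplementary law for $2$ and the reciprocity sign $\epsilon_{r,p}$, while the odd part $m$ controls each residue symbol $\legendre{p}{r}$.
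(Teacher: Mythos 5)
Your proof is correct and matches the paper's argument essentially verbatim: both factor $2m$ into its prime divisors, dispatch the factor $\legendre{2}{p}$ via the supplementary law (which depends only on $p \bmod 8$), and handle each odd prime $r \mid m$ by quadratic reciprocity, noting that $\epsilon_{r,p}$ and $\legendre{p}{r}$ depend only on $p$ modulo $4$ and $r$ respectively, both of which divide $8m$. No gaps.
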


\begin{proof}
We factor $2m = 2\cdot a_1^{e_1} \dots a_k^{e_k}$, where the $a_i$ are distinct odd primes and $e_i \geq 1$ for all $i$.  Then
\[ \legendre{2m}{p} = \legendre{2}{p} \prod_{i=1}^k \legendre{a_i}{p}^{e_i} = (-1)^{\frac{p^2-1}{8}} \prod_{i=1}^k \left[ \epsilon_{a_i,p} \legendre{p}{a_i} \right]^{e_i} \]
by quadratic reciprocity, and likewise for $q$.  Since $p$ and $q$ are congruent mod 8, we have $(-1)^{(p^2-1)/8} = (-1)^{(q^2-1)/8}$ and also $\epsilon_{a_i,p}=\epsilon_{a_i,q}$ for all $i$.  In addition, we have $p \equiv q \pmod{a_i}$ and hence $\legendre{p}{a_i} = \legendre{q}{a_i}$ for all $i$, since $a_i$ divides $8m$.  We conclude that
\[ \legendre{2m}{p} = (-1)^{\frac{q^2-1}{8}} \prod_{i=1}^k \left[\epsilon_{a_i,q}\legendre{q}{a_i}\right]^{e_i} = \legendre{2m}{q}. \qedhere \]
\end{proof}

Each invertible residue class modulo $8m$ contains infinitely many primes, by Dirichlet's theorem on primes in arithmetic progressions, so we define a character
\[ \chi_{8m}: (\Z/8m\Z)^\times \to \{\pm 1\} \]
by setting $\chi_{8m}(a) = \legendre{2m}{p}$ where $p$ is any prime congruent to $a \pmod{8m}$.  Lemma~\ref{lem:2m-legendre-symbol} asserts that $\chi_{8m}$ is well-defined, and the following lemma asserts that it is indeed a character.

\begin{lemma} \label{lem:chi-8m-character}
The function $\chi_{8m}$ is a group homomorphism.
\end{lemma}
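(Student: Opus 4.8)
The function $\chi_{8m}$ is a group homomorphism.

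The plan is to show directly that $\chi_{8m}(ab) = \chi_{8m}(a)\chi_{8m}(b)$ for all $a,b \in (\Z/8m\Z)^\times$, by reducing everything to the multiplicativity of the Legendre symbol. The key observation is that $\chi_{8m}$ is defined by evaluating $\legendre{2m}{p}$ on a prime representative, so I need to choose prime representatives compatibly. First, I would pick primes $p$ and $q$, not dividing $2m$, with $p \equiv a$ and $q \equiv b \pmod{8m}$; these exist by Dirichlet's theorem, and by definition $\chi_{8m}(a) = \legendre{2m}{p}$ and $\chi_{8m}(b) = \legendre{2m}{q}$. The difficulty is that $pq$ is generally \emph{not} prime, so I cannot simply evaluate $\legendre{2m}{pq}$ and call it $\chi_{8m}(ab)$; the definition of $\chi_{8m}$ requires a prime representative of the class $ab \pmod{8m}$.

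The way around this is to use that the Legendre symbol, viewed as the Jacobi symbol in its lower argument, is multiplicative in its \emph{upper} argument. Concretely, for the fixed number $2m$ one has $\legendre{2m}{p}\legendre{2m}{q} = \legendre{2m}{pq}$ provided one interprets $\legendre{2m}{pq}$ as a Jacobi symbol, since both $p$ and $q$ are coprime to $2m$. Then I would invoke Lemma~\ref{lem:2m-legendre-symbol}, or rather the argument behind it, to compare $\legendre{2m}{pq}$ with the value on a genuine prime congruent to $pq \pmod{8m}$. More cleanly: choose a third prime $r \nmid 2m$ with $r \equiv ab \equiv pq \pmod{8m}$, again by Dirichlet, so that $\chi_{8m}(ab) = \legendre{2m}{r}$ by definition. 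The proof of Lemma~\ref{lem:2m-legendre-symbol} already establishes via quadratic reciprocity that $\legendre{2m}{\cdot}$, expanded as $(-1)^{(\cdot^2-1)/8}\prod_i [\epsilon_{a_i,\cdot}\legendre{\cdot}{a_i}]^{e_i}$, depends only on the residue class modulo $8m$ of its lower argument. That same expansion is what I want to exploit.

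The main step is therefore to verify that the expansion from Lemma~\ref{lem:2m-legendre-symbol} is itself multiplicative in the lower argument modulo $8m$. I would write out
\[
\legendre{2m}{p}\legendre{2m}{q} = \legendre{2}{p}\legendre{2}{q}\prod_{i=1}^k \legendre{a_i}{p}^{e_i}\legendre{a_i}{q}^{e_i},
\]
and show each factor matches the corresponding factor for $r$. For the prime-2 part, $\legendre{2}{p}\legendre{2}{q} = (-1)^{(p^2-1)/8 + (q^2-1)/8}$ equals $(-1)^{(r^2-1)/8}$ because the exponent $\tfrac{p^2-1}{8} \bmod 2$ depends only on $p \bmod 8$, and $(p \bmod 8)(q \bmod 8) \equiv r \bmod 8$. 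For each odd-prime factor $a_i$, using reciprocity to flip gives $\legendre{a_i}{p} = \epsilon_{a_i,p}\legendre{p}{a_i}$, and both $\epsilon_{a_i,p}$ (depending on $p \bmod 4$) and $\legendre{p}{a_i}$ (depending on $p \bmod a_i$) are multiplicative across $p,q \mapsto pq \equiv r$, since $4 \mid 8m$ and $a_i \mid 8m$. The main obstacle I expect is purely bookkeeping: keeping straight that the products over $i$ and the prime-2 supplement combine correctly, and in particular handling the $\epsilon_{a_i,\cdot}$ factors, which involve a product of two reciprocity signs when one multiplies representatives. Since $\epsilon_{a_i,p}\epsilon_{a_i,q} = (-1)^{\frac{a_i-1}{2}(\frac{p-1}{2}+\frac{q-1}{2})}$ and $\tfrac{p-1}{2}+\tfrac{q-1}{2} \equiv \tfrac{r-1}{2} \pmod 2$ whenever $pq \equiv r \pmod 4$, these signs also match, and multiplicativity follows. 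Assembling these matching factors gives $\chi_{8m}(a)\chi_{8m}(b) = \legendre{2m}{r} = \chi_{8m}(ab)$, completing the proof.
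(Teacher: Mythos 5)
Your proposal is correct and follows essentially the same route as the paper: pick a prime $r \equiv pq \pmod{8m}$, expand $\legendre{2m}{\cdot}$ via quadratic reciprocity into the factor at $2$ and the factors $\epsilon_{a_i,\cdot}\legendre{\cdot}{a_i}$, and check each factor is multiplicative in the lower argument; your verification that $\epsilon_{a_i,p}\epsilon_{a_i,q}=\epsilon_{a_i,r}$ is the same computation the paper phrases as $\epsilon_{\ell,r}\epsilon_{\ell,p}\epsilon_{\ell,q}=1$.
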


\begin{proof}
Fix primes $p$ and $q$ not dividing $2m$ and take a third prime $r \equiv pq \pmod{8m}$.  We will show that $\legendre{\ell}{r} = \legendre{\ell}{p} \legendre{\ell}{q}$ for every prime $\ell$ dividing $2m$.  Then if we write $2m = 2a_1^{e_1} \cdots a_k^{e_k}$ as before, it follows that
\begin{align*}
\legendre{2m}{r} &= \legendre{2}{r}\prod_{i=1}^k \legendre{a_i}{r}^{e_i} \\
&= \legendre{2}{p}\legendre{2}{q} \prod_{i=1}^k \left[\legendre{a_i}{p}\legendre{a_i}{q}\right]^{e_i} = \legendre{2m}{p} \legendre{2m}{q},
\end{align*}
so that $\chi_{8m}(r) = \chi_{8m}(p) \chi_{8m}(q)$.

The claim that $\legendre{2}{r} = \legendre{2}{p} \legendre{2}{q}$ follows immediately from the fact that $\legendre{2}{p}$ is $1$ if and only if $p \equiv \pm1\pmod{8}$, and likewise for $q$ and $r \equiv pq\pmod{8}$.  We can thus assume now that $\ell$ is odd, and several applications of quadratic reciprocity, plus the fact that $r\equiv pq\pmod{\ell}$ since $\ell$ divides $8m$, give 
\[ \legendre{\ell}{r} = \epsilon_{\ell,r}\legendre{r}{\ell} = \epsilon_{\ell,r} \legendre{pq}{\ell} = \epsilon_{\ell,r} \legendre{p}{\ell}\legendre{q}{\ell} = \epsilon_{\ell,r}\epsilon_{\ell,p}\epsilon_{\ell,q} \legendre{\ell}{p}\legendre{\ell}{q}. \]
We therefore wish to show that $\epsilon_{\ell,r}\epsilon_{\ell,p}\epsilon_{\ell,q} = 1$.  The left side is
\begin{align*} (-1)^{\frac{l-1}{2}\cdot \frac{r-1}{2}} (-1)^{\frac{l-1}{2}\cdot\frac{p-1}{2}} (-1)^{\frac{l-1}{2}\cdot\frac{q-1}{2}} &= (-1)^{\frac{l-1}{2}\cdot \frac{r+p+q-3}{2}} \\
&= (-1)^{\frac{l-1}{2}\left(\frac{pq+p+q+1}{2} - 2\right)} \\
&= (-1)^{\frac{l-1}{2} \cdot \left(\frac{(p+1)(q+1)}{2}-2\right)}.
\end{align*}
The exponent is even since $(p+1)(q+1)$ is a multiple of 4, so the product $\epsilon_{\ell,r}\epsilon_{\ell,p}\epsilon_{\ell,q}$ is indeed equal to 1, as desired.
\end{proof}

\begin{lemma} \label{lem:chi-4m-1}
We have $\chi_{8m}(4m-1) = -1$.
\end{lemma}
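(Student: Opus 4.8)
The plan is to evaluate $\chi_{8m}$ directly on a prime representative of the class $4m-1$. First I would check that $4m-1$ is a unit modulo $8m$, so that $\chi_{8m}$ is even defined there: it is odd, hence coprime to $8$, and for every odd prime $a \mid m$ we have $4m-1 \equiv -1 \pmod{a}$, so $\gcd(4m-1,8m)=1$. By Dirichlet's theorem (exactly as in the definition of $\chi_{8m}$) I then choose a prime $p \equiv 4m-1 \pmod{8m}$, so that $\chi_{8m}(4m-1) = \legendre{2m}{p}$, and it remains only to compute this Legendre symbol.

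Next I would split $\legendre{2m}{p}$ using the factorization $2m = 2a_1^{e_1}\cdots a_k^{e_k}$ with the $a_i$ distinct odd primes, exactly as in Lemmas~\ref{lem:2m-legendre-symbol} and \ref{lem:chi-8m-character}, so that $\legendre{2m}{p} = \legendre{2}{p}\prod_{i=1}^k \legendre{a_i}{p}^{e_i}$. The point where the hypothesis on $m$ enters is the residue of $p$ modulo $8$: since $m$ is odd, $4m \equiv 4 \pmod 8$, and therefore $p \equiv 3 \pmod 8$. This immediately gives $\legendre{2}{p} = -1$, and, just as importantly, it makes $\tfrac{p-1}{2}$ odd.

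For the odd factors I would use that $p \equiv 4m-1 \equiv -1 \pmod{a_i}$ for each $i$. Quadratic reciprocity gives $\legendre{a_i}{p} = \epsilon_{a_i,p}\legendre{p}{a_i} = \epsilon_{a_i,p}\legendre{-1}{a_i}$. Because $\tfrac{p-1}{2}$ is odd we have $\epsilon_{a_i,p} = (-1)^{\frac{a_i-1}{2}}$, which exactly cancels $\legendre{-1}{a_i} = (-1)^{\frac{a_i-1}{2}}$, so $\legendre{a_i}{p} = 1$ for every $i$. Combining this with $\legendre{2}{p} = -1$ yields $\legendre{2m}{p} = -1$, and hence $\chi_{8m}(4m-1) = -1$, as claimed.

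The one delicate point is the interaction between the reciprocity sign $\epsilon_{a_i,p}$ and the factor $\legendre{-1}{a_i}$: both are powers of $-1$ with exponent $\tfrac{a_i-1}{2}$, and they cancel precisely because the oddness of $m$ forces $p \equiv 3 \pmod 8$ and thus $\tfrac{p-1}{2}$ odd. Everything else is routine bookkeeping with the multiplicativity of the Legendre symbol already exploited in the preceding lemmas, so I do not expect any genuine obstacle beyond getting these sign computations straight.
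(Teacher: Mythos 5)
Your proof is correct and follows essentially the same route as the paper: choose a prime $p \equiv 4m-1 \pmod{8m}$, note $p \equiv 3 \pmod 8$ to get $\legendre{2}{p}=-1$, and use $p \equiv -1 \pmod{a_i}$ together with quadratic reciprocity to show each odd prime factor contributes $\legendre{a_i}{p}=1$ via the cancellation of $\epsilon_{a_i,p}$ with $\legendre{-1}{a_i}$. The only difference is your (harmless, slightly more careful) preliminary check that $4m-1$ is a unit modulo $8m$.
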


\begin{proof}
Let $p$ be a prime congruent to $4m-1\pmod{8m}$; we wish to show that $\legendre{2m}{p} = -1$.  Writing $2m = 2\cdot a_1^{e_1} \dots a_k^{e_k}$ once again, with each $a_i$ an odd prime, we claim that $\legendre{a_i}{p} = 1$ for all $i$.  Indeed, we observe that $p\equiv 3\pmod{4}$ and that $p+1$ is a multiple of $m$ and hence of $a_i$, so quadratic reciprocity gives us
\[ \legendre{a_i}{p} = \epsilon_{a_i,p}\legendre{p}{a_i} = (-1)^{\frac{a_i-1}{2}} \legendre{-1}{a_i} = 1. \]
We can therefore conclude that
\[ \legendre{2m}{p} = \legendre{2}{p} \prod_{i=1}^k \legendre{a_i}{p}^{e_i} = \legendre{2}{p} = -1, \]
where the last equality comes from the fact that $p \equiv 4m-1 \equiv 3\pmod{8}$.
\end{proof}

We now use the character $\chi_{8m}$ to prove Proposition~\ref{prop:2m-square}.

\begin{proof}[Proof of Proposition~\ref{prop:2m-square}]
Suppose that $2m$ is a square modulo $4mn-1$.  Then it is a square modulo every prime $p$ which divides $4mn-1$, so $\chi_{8m}(p) = 1$ for all such $p$.  (We note that $\gcd(4mn-1,8m)=1$, since it divides $n(8m) - 2(4mn-1) = 2$ and since $4mn-1$ is odd.)  In particular, the residue class of $4mn-1$  modulo $8m$ is a product of elements of $\ker(\chi_{8m})$, so it also belongs to this kernel.  But since $n$ is odd we have $4mn-1 \equiv 4m-1\pmod{8m}$, so we have just shown that $\chi_{8m}(4m-1) = 1$, and this contradicts Lemma~\ref{lem:chi-4m-1}.
\end{proof}

\subsection{Changemakers and negative surgeries} \label{ssec:changemakers}

We first recall some facts about alternating links and their branched double covers.  Given a reduced, non-split, alternating diagram $D$ of a link $L$, we produce a checkerboard coloring of the plane by alternately coloring regions black and white according to the following convention:
\[ \begin{tikzpicture}[very thick]
    \fill[fill=black!20] (-0.5,0.5)--(0.5,0.5)--(-0.5,-0.5)--(0.5,-0.5);
    \draw (0.1,0.1) -- (0.5,0.5);
    \draw (0.5,-0.5) -- (-0.5,0.5);
    \draw (-0.5,-0.5) -- (-0.1,-0.1);
\end{tikzpicture} \]
The white graph $W_D$ has one vertex per white region, and for each crossing of $D$ we add an edge between the vertices of $W_D$ corresponding to the adjacent white regions.  If we arbitrarily number the vertices of $W_D$ as $v_0,v_1,\dots,v_n$, then the corresponding \emph{Goeritz matrix} is the $n\times n$ matrix whose $(i,i)$th entry is $-\deg(v_i)$ and whose $(i,j)$th entry for $i\neq j$ is the number of edges between $v_i$ and $v_j$, where $1 \leq i, j \leq n$.

Now since $L$ is alternating, its branched double cover $\Sigma(L)$ bounds a compact, negative definite 4-manifold $X_L$ with $H_1(X_L)=0$ and intersection form $Q_{X_L}$ given by the Goeritz matrix $M$ corresponding to an alternating diagram, by \cite[\S 3]{gordon-litherland-signature}.  (It follows that $M$ also gives a presentation of $H_1(\Sigma(L))$.)  Moreover, $\Sigma(L)$ is a Heegaard Floer L-space and $X_L$ is \emph{sharp}, by \cite[Proposition~3.3 and Theorem~3.4]{osz-branched}.%
\footnote{The Goeritz form in \cite{osz-branched} is described equivalently in terms of the black graph $B_D$, being generated by edges of $B_D$ after removing a maximal tree $T$; here we count generators to see that these are in bijection with vertices of $W_D$.  Supposing $D$ has $c$ crossings, and viewing it as a planar 4-valent graph, the formula $V-E+F=2$ becomes $c-2c+(\#V(W_D)+\#V(B_D))=2$.  Since $B_D$ and $T$ have $c$ and $\#V(B_D)-1$ edges respectively, their Goeritz form then has $c-(\#V(B_D)-1) = \#V(W_D)-1$ generators.}
Sharpness is a technical condition which we will not define here (see e.g.\ \cite[Definition~2.1]{greene}), but which implies a very strong consequence proved by Greene.

\begin{definition} \label{def:changemaker}
A vector $\sigma = (\sigma_0, \sigma_1, \dots, \sigma_n)$ with nonnegative integer entries $0 \leq \sigma_0 \leq \sigma_1 \leq \dots \leq \sigma_n$ is called a \emph{changemaker} if it satisfies
\[ \sigma_i \leq \sigma_0 + \dots + \sigma_{i-1} + 1 \]
for all $i = 0,1,\dots,n$.
\end{definition}

Let $\Lambda_{-p}$ denote the rank-1 lattice whose generator $\sigma$ satisfies $\langle\sigma,\sigma\rangle = -p$.

\begin{theorem}[{\cite[Theorem~3.3]{greene}}] \label{thm:changemaker}
Let $K$ be a knot in $S^3$ which admits a positive L-space surgery, and let $p$ be a positive integer such that $Y = S^3_p(K)$ bounds a sharp 4-manifold $X$ with $b_2(X) = n$.  Then there is a lattice embedding 
\[ H_2(X) \oplus \Lambda_{-p} \hookrightarrow -\Z^{n+1}, \]
whose image has full rank and where the generator $\sigma$ of $\Lambda_{-p}$ maps to a changemaker with $\langle\sigma,\sigma\rangle = -p$.
\end{theorem}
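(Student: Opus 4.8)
The plan is to reconstruct Greene's argument, whose two ingredients are a topological construction producing a closed negative definite $4$-manifold, together with a comparison of two independent computations of the Heegaard Floer correction terms (the $d$-invariants) of $Y$.

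First I would build the ambient diagonal lattice. Let $W_p$ be the trace of the $p$-surgery on $K$, namely $B^4$ with a single $2$-handle attached along $K$ with framing $p$; it is simply connected, has $\partial W_p = Y$, and $H_2(W_p) \cong \Lambda_p$ is generated by a class of self-intersection $+p$. Reversing orientation, $-W_p$ is negative definite with $\partial(-W_p) = -Y$ and $H_2(-W_p) \cong \Lambda_{-p}$, generated by a class $\sigma$ with $\langle\sigma,\sigma\rangle = -p$. Since $X$ is negative definite with $\partial X = Y$ and $Y$ is a rational homology sphere, the union $Z = X \cup_Y (-W_p)$ is a closed, smooth $4$-manifold with $b_1(Z) = 0$, $b_2(Z) = n+1$, and negative definite intersection form containing $Q_X \oplus Q_{-W_p} = H_2(X) \oplus \Lambda_{-p}$ as a finite-index, hence full-rank, sublattice. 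Donaldson's diagonalization theorem identifies $Q_Z \cong -\Z^{n+1}$, which yields the full-rank embedding $H_2(X) \oplus \Lambda_{-p} \hookrightarrow -\Z^{n+1}$ and sends $\sigma$ to a vector with $\langle\sigma,\sigma\rangle = -p$; writing this vector in the standard basis and flipping signs of coordinates as needed, we may take $\sigma = (\sigma_0,\dots,\sigma_n)$ with $0 \leq \sigma_0 \leq \cdots \leq \sigma_n$ and $\sum_i \sigma_i^2 = p$.

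The real content is that this $\sigma$ is a changemaker, and I expect the work to concentrate here. The strategy is to compute $d(Y,\mathfrak{t})$ for every spin$^c$ structure $\mathfrak{t}$ in two ways and equate them. Because $X$ is \emph{sharp}, each correction term is attained as $d(Y,\mathfrak{t}) = \max_c \tfrac{c^2 + n}{4}$, the maximum being over characteristic vectors $c$ for $Q_X$ representing $\mathfrak{t}$; pushed through the embedding into $-\Z^{n+1}$, this becomes an explicit combinatorial maximization over characteristic vectors of the diagonal lattice constrained by $\sigma$. On the other hand, the hypothesis that $K$ admits a positive L-space surgery makes $Y$ an L-space, and the mapping-cone surgery formula of Ozsv\'ath--Szab\'o expresses $d(S^3_p(K),i)$ in terms of $d(L(p,1),i)$ and the nonnegative integers $V_0 \geq V_1 \geq \cdots$ extracted from $CFK^\infty(K)$, which satisfy $V_i - V_{i+1} \in \{0,1\}$ and vanish for large $i$. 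Matching the two descriptions across all spin$^c$ structures expresses the sequence $(V_i)$ directly in terms of the coordinates of $\sigma$, and the monotonicity $V_i - V_{i+1} \in \{0,1\}$ then translates precisely into the changemaker inequalities $\sigma_i \leq 1 + \sigma_0 + \cdots + \sigma_{i-1}$.

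The main obstacle is exactly this last comparison: identifying the extremal characteristic vectors that compute the $d$-invariants of the diagonal lattice, and showing that the resulting step differences reproduce the $V_i$ of $K$, all while keeping careful track of the identification of spin$^c$ structures on $Y$ used on the two sides. The delicate point is that the vectors achieving the maximum should be those whose coordinates lie in $\{-1,+1\}$ away from the positions dictated by $\sigma$, so that their successive self-intersections encode the changemaker condition. This bookkeeping---rather than the gluing or the appeal to Donaldson's theorem---is the heart of the argument, and I would organize it following \cite{greene}, drawing on the $d$-invariant machinery for sharp fillings from \cite{osz-branched}.
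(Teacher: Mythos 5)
The paper offers no proof of this statement: it is quoted verbatim from Greene's work, with the citation standing in for the argument. Your outline is a faithful reconstruction of Greene's actual proof --- gluing the reversed surgery trace $-W_p$ to the sharp filling $X$ along $Y$, applying Donaldson diagonalization to the resulting closed negative definite $4$-manifold, and then extracting the changemaker inequalities by comparing the $d$-invariants computed from the sharpness hypothesis with those computed from the integers $V_i$ in the mapping-cone surgery formula. The one caveat is that, as written, the second half is a plan rather than a proof: you correctly identify that the changemaker condition must come from $V_i - V_{i+1} \in \{0,1\}$ together with the identification of the extremal characteristic vectors, but you do not carry out that combinatorial matching, which is where essentially all of the difficulty in Greene's argument lives. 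Since the paper itself simply cites the result, your sketch is at the appropriate level of detail for this context, and the approach is the right one.
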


We observe that replacing an alternating diagram for $L$ with its mirror exchanges the roles of the black and white graphs, so the Goeritz matrix constructed from the black graph (rather than the white graph) is the intersection form of a sharp 4-manifold $X_{\overline{L}}$ bounded by $\Sigma(\overline{L}) = -\Sigma(L)$.  In particular, if $\Sigma(L)$ is $(-p)$-surgery on a knot $K \subset S^3$ for some integer $p > 0$, then applying Theorem~\ref{thm:changemaker} to the identification $-\Sigma(L) \cong S^3_p(\overline{K})$ gives a lattice embedding $H_2(X_{\overline{L}}) \oplus \Lambda_{-p} \hookrightarrow -\Z^{b_2(X_{\overline{L}})+1}$.

In \cite[\S 6]{zentner-simple}, the second author constructed alternating links $L=L(T_{a,b},T_{c,d})$ whose branched double covers are the manifolds $Y(T_{a,b},T_{c,d})$; these are knots if $abcd-1$ is odd and links otherwise.  Thus the $Y(T_{a,b},T_{c,d})$ are L-spaces and we can apply Theorem~\ref{thm:changemaker} to $X_L$ to study whether they can be realized by positive integral surgeries, or to $X_{\mirror{L}}$ for negative integral surgeries.

Figure~\ref{fig:LT35T-35} illustrates this construction for the 2-component link $L(T_{3,5},T_{-3,5})$; the Conway sphere which lifts to an incompressible torus is not visible in the alternating diagram.  We remark that $Y=Y(T_{3,5},T_{-3,5})$ is not realized by non-integral surgery on any knot in $S^3$, by Theorem~\ref{thm:y-nonintegral-surgery}, but that the linking form obstruction of Proposition~\ref{prop:p-1-mod-8} does not suffice to rule out integral surgery, since the only odd prime divisor of
\[ (3\cdot 5)^2+1 = 2\cdot 113 \]
is 1 modulo 8.  Here we can restrict our attention to positive integral surgeries, since if $Y \cong S^3_{-226}(K)$ then we have $Y \cong -Y \cong S^3_{226}(\mirror{K})$.

\tikzset{link/.style = { white, double = black, line width = 1.75pt, double distance = 1.25pt, looseness=1.75 }}
\tikzset{blacklabel/.style={draw, fill=black!20, font=\tiny, circle, inner sep = 0.075cm}}
\begin{figure}
\begin{tikzpicture}[scale=0.75]
\begin{scope}
\clip (-2.15,-3.15) rectangle (5.85,3.6);
\coordinate (A) at (45:2);
\coordinate (B) at (135:2);
\coordinate (C) at (225:2);
\coordinate (D) at (315:2);

\draw[link] (A) to[out=225,in=-45] (-0.25,0.75) to[out=135,in=90] (-1.25,0);
\coordinate (E) at (1.2,-0.75);
\draw[link] (-1.25,0) to[out=270,in=225] (-0.25,-0.75) to[out=45,in=90] (E);
\draw[link] (E) to[out=270,in=315] (-0.25,-0.75) to[out=135,in=45] (C);
\coordinate (F) at (0.8,0.8);
\draw[link,looseness=1] (D) to[out=135,in=270] (0.75,-0.75) to[out=90,in=270] (1.25,0) to[out=90,in=315] (F);
\draw[link] (F) to[out=135,in=45] (-0.25,0.75) to[out=225,in=315] (B);
\begin{scope} \clip (2,0) -- (2,1.5) -- (-2,1.5) -- (-2,-1.5) -- (-0.5,-1.5) -- (-0.5,1) -- (0,1) -- (0,0) -- cycle;
\draw[link] (A) to[out=225,in=-45] (-0.25,0.75) to[out=135,in=90] (-1.25,0) to[out=270,in=225] (-0.25,-0.75);
\end{scope}
\begin{scope} \clip (0.5,0) rectangle (1.5,-1);
\draw[link] (-0.25,-0.75) to[out=45,in=90] (E);
\end{scope}

\draw[link] (A) to[out=45,in=135,looseness=1] (3.25,1.25) to[out=315,in=45] (3.25,0.25) to[out=225,in=180] (3.25,-1.25) to[out=0,in=315] (3.25,0.25) to[out=135,in=225] (3.25,1.25);
\draw[link] (3.25,1.25) to[out=45,in=210] (2.75,2.5) to[out=30,in=150] (3.75,2.5) to[out=330,in=0,looseness=1.5] (1,-3) to[out=180,in=225,looseness=1.25] (C);
\draw[link] (B) to[out=135,in=150,looseness=1.25] (2.75,2.5) to[out=330,in=210] (3.75,2.5);
\draw[link] (D) to[out=315,in=225,looseness=1] (2.75,-1.25) to[out=45,in=135] (3.75,-1.25) to[out=315,in=30,looseness=2.25] (3.75,2.5);
\begin{scope} \clip (3.25,2.25) rectangle (4.25,2.75);
\draw[link] (2.75,2.5) to[out=30,in=150] (3.75,2.5) to[out=330,in=0,looseness=1.5] (1,-3);
\end{scope}
\begin{scope} \clip (2.75,1.5) rectangle (3.75,1);
\draw[link] (A) to[out=45,in=135,looseness=1] (3.25,1.25) to[out=315,in=45] (3.25,0.25);
\end{scope}
\begin{scope} \clip (2.75,0) rectangle(3.75,0.5);
\draw[link] (3.25,1.25) to[out=225,in=135] (3.25,0.25) to[out=315,in=0] (3.25,-1.25);
\end{scope}
\begin{scope} \clip (2.25,-1.5) rectangle (3.25,-0.75);
\draw[link] (3.25,0.25) to[out=225,in=180] (3.25,-1.25);
\end{scope}
\draw[thick,dotted] (0,0) circle (2); 
\end{scope}
\begin{scope}[xshift=9.5cm]
\clip (-3,-3.15) rectangle (5.85,3.6);
\coordinate (A) at (45:1);
\coordinate (B) at (135:2);
\coordinate (C) at (225:2);
\coordinate (D) at (315:2);

\coordinate (E) at (1.2,-0.75);
\draw[link] (E) to[out=270,in=315] (-0.25,-0.75) to[out=135,in=45] (C);
\coordinate (F) at (0.8,0.8);
\draw[link,looseness=1] (D) to[out=135,in=270] (0.75,-0.75) to[out=90,in=270] (1.25,0) to[out=90,in=315] (F);
\draw[link] (F) to[out=135,in=45] (-0.25,0.75) to[out=225,in=315] (B);
\draw[link] (A) to[out=225,in=-45] (-0.25,0.75) to[out=135,in=180] (2,2) to[out=0,in=0, looseness=2.25] (2,-2) to[out=180,in=225] (-0.25,-0.75) to[out=45,in=90] (E);
\begin{scope} \clip (-0.75,0.5) rectangle (0.25,1);
\draw[link] (F) to[out=135,in=45] (-0.25,0.75) to[out=225,in=315] (B);
\end{scope}
\begin{scope} \clip (0.5,0) rectangle (1.5,-1);
\draw[link] (-0.25,-0.75) to[out=45,in=90] (E);
\end{scope}

\draw[link] (A) to[out=45,in=165,looseness=1] (2,1.25) to[out=345,in=180] (3.25,-1.25) to[out=0,in=315] (3.25,0.25) to[out=135,in=0,looseness=1] (0,0.1) to[out=180,in=150,looseness=3.5] (3.75,2.5) (3.75,2.5) to[out=330,in=0,looseness=1.5] (1,-3) to[out=180,in=225,looseness=1.25] (C);
\draw[link] (B) to[out=135,in=150,looseness=1.25] (2.75,2.5) to[out=330,in=210] (3.75,2.5);
\draw[link] (D) to[out=315,in=225,looseness=1] (2.75,-1.25) to[out=45,in=135] (3.75,-1.25) to[out=315,in=30,looseness=2.25] (3.75,2.5);
\begin{scope} \clip (-1.5,0) rectangle (1.5,0.5);
\draw[link,looseness=1] (0.75,-0.75) to[out=90,in=270] (1.25,0) to[out=90,in=315] (F);
\end{scope}
\begin{scope} \clip (3.25,2.25) rectangle (4.25,2.75);
\draw[link] (0,0.1) to[out=180,in=150,looseness=3.5] (3.75,2.5) to[out=330,in=0,looseness=1.5] (1,-3);
\end{scope}
\begin{scope} \clip (2.5,0) rectangle (3.75,0.7);
\draw[link] (0,0.1) to[out=0,in=135,looseness=1] (3.25,0.25) to[out=315,in=0] (3.25,-1.25);
\end{scope}
\begin{scope} \clip (2.25,-1.5) rectangle (3.25,-0.75);
\draw[link] (2,1.25) to[out=345,in=180] (3.25,-1.25);
\end{scope}
\begin{scope} \clip (3.5,-1) rectangle (4.5,-2);
\draw[link] (2,2) to[out=0,in=0, looseness=2.25] (2,-2);
\end{scope}
\begin{scope} \clip (-0.75,-1.25) rectangle (0.25,-0.25);
\draw[link] (E) to[out=270,in=315] (-0.25,-0.75) to[out=135,in=45] (C);
\end{scope}
\node[blacklabel] at (4.75,2.25) {$0$};
\node[blacklabel] at (-2.1,1.25) {$1$};
\node[blacklabel] at (4,0.3) {$2$};
\node[blacklabel] at (1.85,-0.5) {$3$};
\node[blacklabel] at (0.35,-0.75) {$4$};
\node[blacklabel] at (1,-2.5) {$5$};
\draw[dashed] (4,-2.5) -- (3.25,-1.1);
\node[blacklabel] at (4,-2.5) {$6$};
\end{scope}
\end{tikzpicture}
\caption{Left, a diagram for $L(T_{3,5},T_{-3,5})$ with dotted circle indicating a Conway sphere.  Right, an alternating diagram for the same link with regions of the white graph labeled.} \label{fig:LT35T-35}
\end{figure}
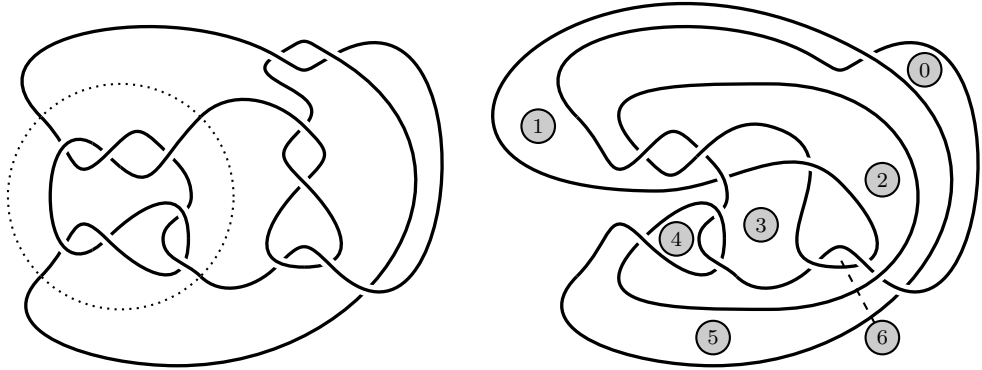

Examining the white graph of the alternating diagram $D$ in Figure~\ref{fig:LT35T-35}, we see that $L=L(T_{3,5},T_{-3,5})$ has Goeritz matrix
\[ G_D = \begin{pmatrix}
-4 & 2 & 1 & 0 & 0 & 0 \\
2 & -5 & 1 & 0 & 1 & 1 \\
1 & 1 & -5 & 2 & 0 & 1 \\
0 & 0 & 2 & -3 & 1 & 0 \\
0 & 1 & 0 & 1 & -3 & 0 \\
0 & 1 & 1 & 0 & 0 & -2
\end{pmatrix}, \]
which is the matrix of the intersection form on the handlebody $X_L$ with boundary $Y \cong \Sigma(L)$.
\begin{proposition} \label{prop:changemaker-35}
There is a lattice embedding $H_2(X_L) \oplus \Lambda_{-226} \hookrightarrow -\Z^7$ which sends the generator of $\Lambda_{-226}$ to the changemaker
\[ \sigma = (1,2,2,4,4,8,11). \]
\end{proposition}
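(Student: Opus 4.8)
The plan is to exhibit the embedding directly, by writing down the image of a basis and checking that the resulting Gram matrix is exactly $G_D$. Equip $-\Z^7$ with the standard orthogonal basis $e_0, e_1, \dots, e_6$, so that $\langle e_i, e_j\rangle = -\delta_{ij}$, and send the generator of $\Lambda_{-226}$ to
\[ \sigma = e_0 + 2e_1 + 2e_2 + 4e_3 + 4e_4 + 8e_5 + 11e_6. \]
The first thing I would verify is that this assignment is legitimate: one computes $\langle \sigma, \sigma\rangle = -(1+4+4+16+16+64+121) = -226$, and the coordinate tuple $(1,2,2,4,4,8,11)$ satisfies the inequalities of Definition~\ref{def:changemaker}, so it is indeed a changemaker. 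By Theorem~\ref{thm:changemaker} this is the only shape the image of $\Lambda_{-226}$ could take.

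The heart of the argument is to produce six vectors $g_1, \dots, g_6 \in \sigma^\perp \subset -\Z^7$ whose Gram matrix is exactly $G_D$; these will be the images of the standard generators of $H_2(X_L)$. To find them I would exploit the additive relations among the entries of $\sigma$, namely $\sigma_1 = 2\sigma_0$, $\sigma_2 = \sigma_1$, $\sigma_3 = \sigma_1+\sigma_2$, $\sigma_4 = \sigma_3$, $\sigma_5 = \sigma_3 + \sigma_4$, and $\sigma_6 = \sigma_0 + \sigma_2 + \sigma_5$: each such relation furnishes a vector orthogonal to $\sigma$ (for instance $\sigma_2 = \sigma_1$ gives $e_1 - e_2$), and greedily these vectors assemble into Greene's standard basis of the changemaker lattice $\sigma^\perp$. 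One then modifies this basis by an element of $GL(6,\Z)$ until the Gram matrix matches $G_D$ on the nose. Carrying this out, I expect to arrive at
\begin{align*}
g_1 &= -e_0 + e_4 + e_5 - e_6, & g_2 &= 2e_0 - e_1, & g_3 &= -e_0 - e_1 - e_3 - e_4 + e_6, \\
g_4 &= e_3 + e_4 - e_5, & g_5 &= e_1 + e_2 - e_3, & g_6 &= e_1 - e_2,
\end{align*}
each of which lies in $\sigma^\perp$ by the one-line check that $\langle g_i, \sigma\rangle = 0$, and whose self-pairings $-4,-5,-5,-3,-3,-2$ already agree with the diagonal of $G_D$.

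It then remains to confirm two routine facts. First, the fifteen off-diagonal pairings $\langle g_i, g_j\rangle$ reproduce the off-diagonal entries of $G_D$; this is a direct computation in the orthogonal basis $e_0,\dots,e_6$ (for example $\langle g_2, g_3\rangle = (2e_0-e_1)\cdot(-e_0-e_1-e_3-e_4+e_6)$ contributes $-2$ from $e_0$ and $+1$ from $e_1$, giving $\langle g_2,g_3\rangle = 1$ as required). Second, the seven vectors $g_1, \dots, g_6, \sigma$ are linearly independent and hence span a full-rank sublattice of $-\Z^7$: this is immediate because the $g_i$ span $\sigma^\perp$ while $\sigma$ has nonzero self-pairing and so cannot lie in $\sigma^\perp \otimes \Q$; equivalently, $\det(G_D) = \pm 226 \neq 0$ forces the $g_i$ to be independent. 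Sending the $i$-th standard generator of $H_2(X_L)$ to $g_i$ and the generator of $\Lambda_{-226}$ to $\sigma$ then yields the desired lattice embedding.

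I expect the main obstacle to be the middle step, namely pinning down the correct integral basis $g_1, \dots, g_6$. The standard basis of $\sigma^\perp$ read off directly from the changemaker relations has a Gram matrix genuinely different from $G_D$, so some search or an explicit $GL(6,\Z)$ change of basis is needed to realize $G_D$ exactly; guided by the diagonal $(-4,-5,-5,-3,-3,-2)$ and the prescribed orthogonality pattern (in particular the short vector $g_6$ of norm $2$ and the unusual pairing $\langle g_3, g_4\rangle = 2$), the system of constraints is rigid enough that a short hand computation locates the vectors above. Once they are in hand, everything else is a finite verification.
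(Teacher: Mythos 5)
Your proposal is correct and takes essentially the same approach as the paper: both proofs simply exhibit an explicit basis of a sublattice of $\sigma^\perp \subset -\Z^7$ whose Gram matrix is $G_D$ (your vectors $g_i$ differ from the paper's only by signs and a permutation of the two coordinates where $\sigma$ has equal entries, and I have checked that all pairings $\langle g_i,g_j\rangle$ do reproduce $G_D$). The extra verifications you include --- that $\sigma$ is a changemaker of square $-226$ and that the full-rank claim follows from $\det(G_D)\neq 0$ --- are correct and are left implicit in the paper.
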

\begin{proof}
The sublattice of $-\ZZ^7$ with basis $(v_1,v_2,\dots,v_6)$ consisting of the vectors
\[ \begin{pmatrix}1\\0\\0\\0\\-1\\-1\\1\end{pmatrix},
\begin{pmatrix}-2\\0\\1\\0\\0\\0\\0\end{pmatrix},
\begin{pmatrix}1\\0\\1\\1\\1\\0\\-1\end{pmatrix},
\begin{pmatrix}0\\0\\0\\-1\\-1\\1\\0\end{pmatrix},
\begin{pmatrix}0\\-1\\-1\\1\\0\\0\\0\end{pmatrix},
\begin{pmatrix}0\\1\\-1\\0\\0\\0\\0\end{pmatrix} \]
lies in the orthogonal complement $\sigma^\perp$, and its Gram matrix $\begin{pmatrix} \langle v_i,v_j\rangle \end{pmatrix}$ is $G_D$.
\end{proof}

Proposition~\ref{prop:changemaker-35} says that the changemaker obstruction is also not strong enough to determine whether $Y=Y(T_{3,5},T_{-3,5})$ can be realized by integral surgery on a knot.  In fact, the changemaker $\sigma$ given above is the unique one satisfying the conclusion of Theorem~\ref{thm:changemaker}.  This uniquely determines the Alexander polynomial of any knot on which $226$-surgery produces $Y$, as explained in \cite[\S 2]{greene}; here we merely point out \cite[Proposition~3.1]{greene}, by which such a knot must have genus $\frac{1}{2}(226-|\sigma|_1) = 97$.

\begin{question} \label{q:L35}
Can $Y(T_{3,5},T_{-3,5})$ be realized by $226$-surgery on a knot in $S^3$?
\end{question}

After the initial version of this paper was made public, Duncan McCoy informed us that Question~\ref{q:L35} has a positive answer.

\begin{proposition}[McCoy] \label{prop:L35}
The manifold $Y(T_{3,5},T_{-3,5})$ can be realized as $226$-surgery on a knot in $S^3$.
\end{proposition}

\begin{proof}
In Figure~\ref{fig:mccoy-band}, we have illustrated a band which may be attached to turn the link $L(T_{3,5},T_{-3,5})$ into an unknot.  This operation corresponds to performing surgery on the branched double cover $Y=Y(T_{3,5},T_{-3,5})$ to get the branched double cover of the unknot, which is $S^3$; turning this around, we realize $Y$ by surgery on a knot $K$ in $S^3$, and so $Y$ must arise from a $226$-surgery by the above discussion.
\begin{figure}
\begin{tikzpicture}
\begin{scope}
\clip (-3,-3.15) rectangle (5.85,3.6);
\coordinate (A) at (45:1);
\coordinate (B) at (135:2);
\coordinate (C) at (225:2);
\coordinate (D) at (315:2);

\coordinate (E) at (1.2,-0.75);
\draw[link] (E) to[out=270,in=315] (-0.25,-0.75) to[out=135,in=45] (C);
\coordinate (F) at (0.8,0.8);
\draw[link,looseness=1] (D) to[out=135,in=270] (0.75,-0.75) to[out=90,in=270] (1.25,0) to[out=90,in=315] (F);
\draw[link] (F) to[out=135,in=45] (-0.25,0.75) to[out=225,in=315] (B);
\draw[link] (A) to[out=225,in=-45] (-0.25,0.75) to[out=135,in=180] (2,2) to[out=0,in=0, looseness=2.25] (2,-2) to[out=180,in=225] (-0.25,-0.75) to[out=45,in=90] (E);
\begin{scope} \clip (-0.75,0.5) rectangle (0.25,1);
\draw[link] (F) to[out=135,in=45] (-0.25,0.75) to[out=225,in=315] (B);
\end{scope}
\begin{scope} \clip (0.5,0) rectangle (1.5,-1);
\draw[link] (-0.25,-0.75) to[out=45,in=90] (E);
\end{scope}

\draw[link] (A) to[out=45,in=165,looseness=1] (2,1.25) to[out=345,in=180] (3.25,-1.25) to[out=0,in=315] (3.25,0.25) to[out=135,in=0,looseness=1] (0,0.1) to[out=180,in=150,looseness=3.5] (3.75,2.5) (3.75,2.5) to[out=330,in=0,looseness=1.5] (1,-3) to[out=180,in=225,looseness=1.25] (C);
\draw[link] (B) to[out=135,in=150,looseness=1.25] (2.75,2.5) to[out=330,in=210] (3.75,2.5);
\draw[link] (D) to[out=315,in=225,looseness=1] (2.75,-1.25) to[out=45,in=135] (3.75,-1.25) to[out=315,in=30,looseness=2.25] (3.75,2.5);
\begin{scope} \clip (-1.5,0) rectangle (1.5,0.5);
\draw[link,looseness=1] (0.75,-0.75) to[out=90,in=270] (1.25,0) to[out=90,in=315] (F);
\end{scope}
\begin{scope} \clip (3.25,2.25) rectangle (4.25,2.75);
\draw[link] (0,0.1) to[out=180,in=150,looseness=3.5] (3.75,2.5) to[out=330,in=0,looseness=1.5] (1,-3);
\end{scope}
\begin{scope} \clip (2.5,0) rectangle (3.75,0.7);
\draw[link] (0,0.1) to[out=0,in=135,looseness=1] (3.25,0.25) to[out=315,in=0] (3.25,-1.25);
\end{scope}
\begin{scope} \clip (2.25,-1.5) rectangle (3.25,-0.75);
\draw[link] (2,1.25) to[out=345,in=180] (3.25,-1.25);
\end{scope}
\begin{scope} \clip (3.5,-1) rectangle (4.5,-2);
\draw[link] (2,2) to[out=0,in=0, looseness=2.25] (2,-2);
\end{scope}
\begin{scope} \clip (-0.75,-1.25) rectangle (0.25,-0.25);
\draw[link] (E) to[out=270,in=315] (-0.25,-0.75) to[out=135,in=45] (C);
\end{scope}

\begin{scope}[lband/.style = { white, double = black, line width = 0.75pt, double distance = 1pt, looseness=1.25 }]
\begin{scope}
\clip (0.61,0.97) to[out=60,in=180,looseness=0.9] (1.85,1.75) to[out=170,in=55,looseness=1.25] (0.4,1.1) -- cycle;
\draw[fill=gray!30] (0,0) rectangle (3,2);
\end{scope}
\begin{scope}
\clip (3.3,0.2) -- (3.1,0.4) to[out=90,in=-10] (1.86,1.77) to[out=10,in=80,looseness=1.2] (3.3,0.2);
\draw[fill=gray!30] (1.5,0) rectangle (3.5,2);
\end{scope}
\draw[lband,looseness=1] (0.6,0.95) to[out=60,in=180] (2.25,1.8) to[out=0,in=80] (3.3,0.2);
\draw[lband,looseness=1] (0.4,1.1) to[out=60,in=180] (1.5,1.8) to[out=0,in=90] (3.1,0.35);
\end{scope}
\begin{scope}[lfix/.style = { white, double = black, line width = 0.5pt, double distance = 1.25pt, looseness=1.75 }]
\begin{scope}
\clip (0.25,0.9) rectangle (0.75,1.25);
\draw[lfix] (F) to[out=135,in=45] (-0.25,0.75) to[out=225,in=315] (B);
\end{scope}
\begin{scope}
\clip (2.9,0.1) rectangle (3.5,0.55);
\draw[lfix] (0,0.1) to[out=0,in=135,looseness=1] (3.25,0.25) to[out=315,in=0] (3.25,-1.25);
\end{scope}
\end{scope}
\end{scope}
\end{tikzpicture}
\caption{A band which turns the link $L(T_{3,5},T_{-3,5})$ into an unknot.}
\label{fig:mccoy-band}
\end{figure}
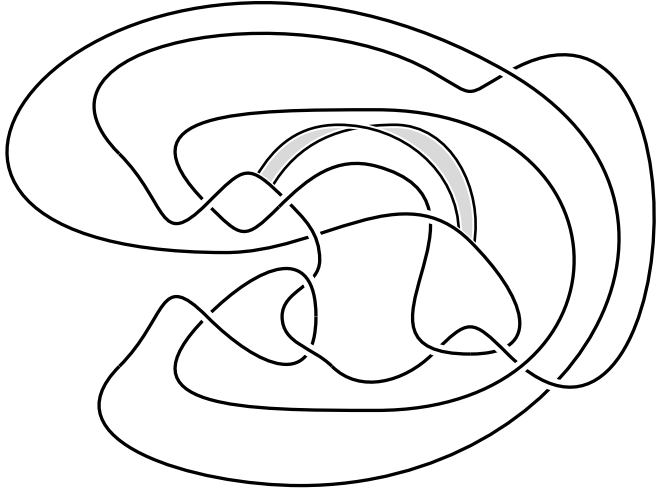
\end{proof}

For some more conclusive applications of the changemaker technique, Figure~\ref{fig:LTpqTrs} shows a family of knots $L(T_{p,q},T_{r,s})$, as constructed in \cite[\S 6]{zentner-simple}, whose branched double covers are $Y(T_{p,q},T_{r,s})$.  Here we restrict to the special case
\[ \frac{p}{q} = a_0 + \frac{1}{a_1}, \qquad \frac{r}{s} = b_0 + \frac{1}{b_1} \]
where $a_0,a_1,b_0,b_1$ are positive integers, and in fact $a_1,b_1 \geq 2$ since $q$ and $s$ should not be $1$.  In the figure we have labeled some of the black regions; the twist regions labeled $1-b_1$ and $1-a_1$ contain an additional $b_1-2$ and $a_1-2$ black regions, respectively, so that the black graph has $a_1+b_1+2$ vertices.  We will use some general facts about changemakers, combined with a handful of computer-assisted computations, to show that many of the $Y(T_{p,q},T_{r,s})$ cannot be Dehn surgeries on knots in $S^3$.
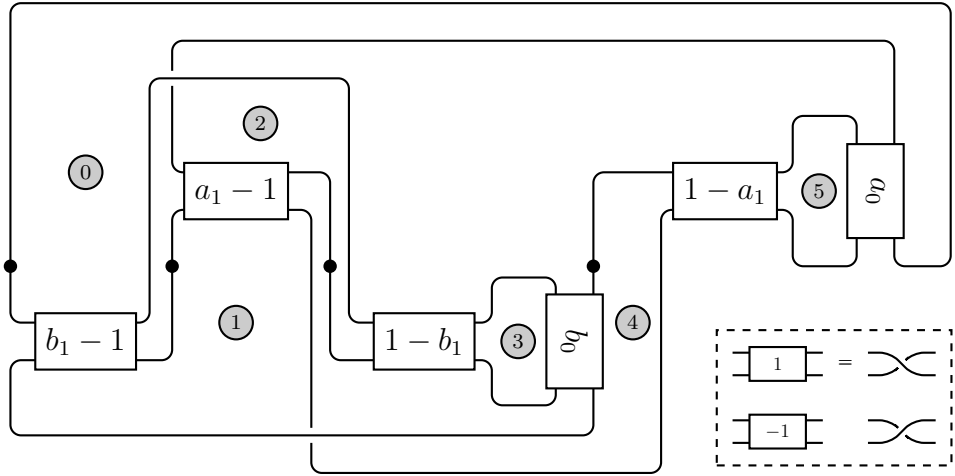
\begin{figure}
\tikzset{twistregion/.style={draw, fill=white, thick, minimum width=1.25cm, minimum height=0.75cm}}
\tikzset{blacklabel/.style={draw, fill=black!20, font=\tiny, circle, inner sep = 0.075cm}}
\tikzset{dot/.style={draw,fill,circle,inner sep=0pt,minimum size=0.15cm}}
\[ \begin{tikzpicture}[thick]
\draw [rounded corners] (-0.5,0.25) -- (-1.5,0.25) -- (-1.5,4.5) -- (11,4.5) -- (11,1) -- (10.25,1) -- (10.25,2);
\draw [rounded corners] (-0.5,-0.25) -- (-1.5,-0.25) -- (-1.5,-1.25) -- (6.25,-1.25) -- (6.25,0);
\draw [rounded corners] (-0.5,-0.25) -- (0.65,-0.25) -- (0.65,1.75) -- (1.5,1.75);
\draw [rounded corners] (-0.5,0.25) -- (0.35,0.25) -- (0.35,3.5) -- (3,3.5) -- (3,0.25) -- (4,0.25);
\draw [rounded corners] (1.5,2.25) -- (2.75,2.25) -- (2.75,-0.25) -- (4,-0.25);
\draw [rounded corners] (1.5,2.25) -- (0.65,2.25) -- (0.65, 3.4) (0.65,3.6) -- (0.65, 4) -- (10.25,4) -- (10.25,2);
\draw [rounded corners] (1.5,1.75) -- (2.5,1.75) -- (2.5,-1.15) (2.5,-1.35) -- (2.5,-1.75) -- (7.15,-1.75) -- (7.15,1.75) -- (8,1.75);
\draw [rounded corners] (4,-0.25) -- (4.9, -0.25) -- (4.9,-0.85) -- (5.75, -0.85) -- (5.75,0);
\draw [rounded corners] (4,0.25) -- (4.9,0.25) -- (4.9,0.85) -- (5.75, 0.85) -- (5.75,0);
\draw [rounded corners] (6.25,0) -- (6.25, 2.25) -- (8,2.25);
\draw [rounded corners] (8,2.25) -- (8.9,2.25) -- (8.9,3) -- (9.75,3) -- (9.75,2);
\draw [rounded corners] (8,1.75) -- (8.9,1.75) -- (8.9,1) -- (9.75,1) -- (9.75,2);
\node [twistregion] at (-0.5,0) {$b_1-1$}; 
\node [twistregion] at (1.5,2) {$a_1-1$};
\node [twistregion] at (4,0) {$1-b_1$};
\node [twistregion] at (6,0) [rotate=-90] {$b_0$};
\node [twistregion] at (8,2) {$1-a_1$};
\node [twistregion] at (10,2) [rotate=-90] {$a_0$};
\node [dot] at (-1.5,1) {};
\node [dot] at (0.65,1) {};
\node [dot] at (2.75,1) {};
\node [dot] at (6.25,1) {};
\node [blacklabel] at (-0.5,2.25) {$0$};
\node [blacklabel] at (1.5,0.25) {$1$};
\node [blacklabel] at (1.825,2.9) {$2$};
\node [blacklabel] at (5.25,0) {$3$};
\node [blacklabel] at (6.775,0.25) {$4$};
\node [blacklabel] at (9.25,2) {$5$};
\begin{scope}[scale=0.6, xshift=5.5cm, yshift=-0.5cm, transform shape]
\draw [dashed] (7.65,0.75) rectangle (12.85, -2.25);
\draw (8,0.25) -- (10,0.25) (8,-0.25) -- (10,-0.25);
\node [twistregion] at (9,0) {$1$};
\node at (10.5,0) {$=$};
\draw [rounded corners] (11,0.25) -- (11.5,0.25) -- (12,-0.25) -- (12.5,-0.25) (11,-0.25) -- (11.5,-0.25) -- (11.6,-0.15) (11.9,0.15) -- (12,0.25) -- (12.5,0.25);
\draw (8,-1.25) -- (10,-1.25) (8,-1.75) -- (10,-1.75);
\node [twistregion] at (9,-1.5) {$-1$};
\node at (10.5,0) {$=$};
\draw [rounded corners] (11,-1.25) -- (11.5,-1.25) -- (11.6, -1.35) (11.9,-1.65) -- (12,-1.75) -- (12.5,-1.75) (11,-1.75) -- (11.5,-1.75) -- (12,-1.25) -- (12.5,-1.25);
\end{scope}
\end{tikzpicture} \]
\caption{The knot $L(T_{p,q},T_{r,s})$ whose branched double cover is $Y=Y(T_{p,q},T_{r,s})$.  Here $\frac{p}{q} = a_0 + \frac{1}{a_1}$ and $\frac{r}{s} = b_0 + \frac{1}{b_1}$ for some integers $a_0,a_1,b_0,b_1 > 0$.  The black dots indicate points of intersection with a Conway sphere, which lifts to an incompressible torus in $Y$, and the circled numbers indicate some vertices of the black graph.}
\label{fig:LTpqTrs}
\end{figure}

\begin{lemma} \label{lem:changemaker-bound}
Let $\sigma = (\sigma_0,\dots,\sigma_n)$ be a changemaker.  Then $|\langle \sigma,\sigma \rangle| \leq \frac{1}{3}(4^{n+1}-1)$.
\end{lemma}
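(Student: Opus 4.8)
The plan is to reduce the inequality to a coordinatewise bound on the entries of a changemaker. Since the relevant inner product is the one on $-\Z^{n+1}$, where the generator $\sigma$ of $\Lambda_{-p}$ satisfies $\langle\sigma,\sigma\rangle = -p = -\sum_{i=0}^n \sigma_i^2$, we have $|\langle \sigma, \sigma\rangle| = \sum_{i=0}^n \sigma_i^2$, so the claim is equivalent to $\sum_{i=0}^n \sigma_i^2 \leq \frac{1}{3}(4^{n+1}-1)$. The right-hand side is exactly $\sum_{i=0}^n 4^i$, which suggests that the extremal changemaker is the doubling sequence $(1,2,4,\dots,2^n)$, for which $\sigma_i = 2^i$ and equality holds throughout. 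It therefore suffices to prove the per-coordinate bound $\sigma_i \leq 2^i$ for every $i$.

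First I would establish $\sigma_i \leq 2^i$ for all $i$ by induction on $i$. For the base case, the changemaker condition with $i=0$ reads $\sigma_0 \leq 1 = 2^0$, since the sum $\sigma_0 + \dots + \sigma_{-1}$ is empty. For the inductive step, assuming $\sigma_j \leq 2^j$ for all $j < i$, the partial sum satisfies
\[ \sigma_0 + \sigma_1 + \dots + \sigma_{i-1} \leq \sum_{j=0}^{i-1} 2^j = 2^i - 1, \]
and then the defining inequality $\sigma_i \leq \sigma_0 + \dots + \sigma_{i-1} + 1$ gives $\sigma_i \leq 2^i$, completing the induction.

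Having established $\sigma_i \leq 2^i$, and using that the entries $\sigma_i$ are nonnegative integers, I would square and sum to obtain
\[ \sum_{i=0}^n \sigma_i^2 \leq \sum_{i=0}^n 4^i = \frac{4^{n+1}-1}{3}, \]
which is precisely the desired bound on $|\langle\sigma,\sigma\rangle|$.

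There is no serious obstacle here: the entire argument rests on the observation that the changemaker condition forces the partial sums to grow no faster than $2^i - 1$, which in turn caps each coordinate at $2^i$. The only point worth emphasizing is that the bound is sharp, attained exactly by $\sigma = (1,2,4,\dots,2^n)$, so the constant $\frac{1}{3}$ cannot be improved.
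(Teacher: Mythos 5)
Your proof is correct and matches the paper's argument exactly: both establish $\sigma_i \leq 2^i$ by induction using the changemaker inequality and the geometric partial sums, then bound $|\langle\sigma,\sigma\rangle|$ by $\sum_{i=0}^n 4^i$. Your write-up is just a more detailed version (explicit base case and a remark on sharpness) of the paper's two-line proof.
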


\begin{proof}
An easy induction shows that $\sigma_i \leq 2^i$, since if it holds for all $j < i$ then
\[ \sigma_i \leq \sigma_0 + \dots + \sigma_{i-1}+1 \leq (1+2+\dots+2^{i-1})+1 = 2^i. \]
Thus $|\langle \sigma,\sigma \rangle|$ is at most $1 + 4 + 4^2 + \dots + 4^n$.
\end{proof}

\begin{proposition} \label{prop:goeritz-bound}
Let $D$ be a non-split, alternating diagram of a link $L$, and let $p = |H_1(\Sigma(L);\Z)|$.  If $\Sigma(L)$ is $p$-surgery on some knot $K \subset S^3$, then the white graph $W_D$ has at least $\log_4(3p+1)$ vertices.  If it is $-p$-surgery on some knot, then the black graph $B_D$ has at least $\log_4(3p+1)$ vertices.
\end{proposition}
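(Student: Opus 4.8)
The plan is to apply Greene's changemaker obstruction (Theorem~\ref{thm:changemaker}) directly to the sharp $4$-manifold bounded by $\Sigma(L)$, and then feed the resulting changemaker into Lemma~\ref{lem:changemaker-bound} to turn the surgery coefficient $p$ into a lower bound on the rank of the ambient lattice $-\Z^{n+1}$, which is precisely the vertex count of the relevant checkerboard graph.

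First I would handle the positive case. Suppose $\Sigma(L) \cong S^3_p(K)$ with $p = |H_1(\Sigma(L);\Z)| > 0$. Because $\Sigma(L)$ is a Heegaard Floer L-space and $p > 0$, the knot $K$ admits a positive L-space surgery, so the hypotheses of Theorem~\ref{thm:changemaker} hold with the sharp $4$-manifold $X = X_L$ whose intersection form is the white Goeritz matrix. Writing $n = b_2(X_L)$, the Goeritz description gives $\#V(W_D) = n+1$, since the white Goeritz matrix is $n \times n$ and is obtained after deleting one of the $n+1$ white vertices. Theorem~\ref{thm:changemaker} then produces a lattice embedding $H_2(X_L) \oplus \Lambda_{-p} \hookrightarrow -\Z^{n+1}$ under which the generator of $\Lambda_{-p}$ maps to a changemaker $\sigma = (\sigma_0,\dots,\sigma_n)$ with $\langle \sigma,\sigma \rangle = -p$. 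Since $\sigma$ has exactly $n+1$ entries, Lemma~\ref{lem:changemaker-bound} gives $p = |\langle \sigma,\sigma\rangle| \leq \tfrac{1}{3}(4^{n+1}-1)$, hence $4^{n+1} \geq 3p+1$ and $\#V(W_D) = n+1 \geq \log_4(3p+1)$, as desired.

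Finally, the negative case follows by mirroring. If $\Sigma(L) \cong S^3_{-p}(K)$, then $\Sigma(\mirror{L}) = -\Sigma(L) \cong S^3_p(\mirror{K})$ is again an L-space, now bounded by the sharp $4$-manifold $X_{\mirror{L}}$ whose intersection form is the black Goeritz matrix, so that $b_2(X_{\mirror{L}}) = \#V(B_D)-1$. Running the argument above verbatim, with $X_{\mirror{L}}$ in place of $X_L$ and $\mirror{K}$ in place of $K$, yields $\#V(B_D) \geq \log_4(3p+1)$.

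The proof is essentially a matter of assembling the inputs already in hand, so I do not expect a genuine obstacle; the only point demanding care is bookkeeping. Specifically, one must confirm that the length $n+1$ of the changemaker (equivalently, the rank of the ambient lattice $-\Z^{n+1}$) is the \emph{vertex} count of the checkerboard graph rather than the rank $n$ of the Goeritz form itself, and one must verify that both the L-space condition and the sharpness of the relevant $4$-manifold genuinely hold in the positive setting and in the mirrored negative setting before invoking Theorem~\ref{thm:changemaker}.
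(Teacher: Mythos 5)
Your proposal is correct and follows essentially the same route as the paper: apply Theorem~\ref{thm:changemaker} to the sharp $4$-manifold $X_L$ (resp.\ $X_{\mirror{L}}$) whose intersection form is the white (resp.\ black) Goeritz matrix, then bound $p=|\langle\sigma,\sigma\rangle|$ by $\tfrac{1}{3}(4^{n+1}-1)$ via Lemma~\ref{lem:changemaker-bound} and identify $n+1$ with the vertex count. Your explicit verification of the L-space hypothesis and the off-by-one bookkeeping between the rank of the Goeritz form and the number of checkerboard regions is exactly the care the paper's argument implicitly relies on.
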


\begin{proof}
Let $v=n+1$ be the number of vertices in $W_D$, and suppose that $\Sigma(L)$ arises from a $p$-surgery.  We apply Theorem~\ref{thm:changemaker} to the sharp $4$-manifold $X_L$ with intersection form given by the $n\times n$ Goeritz matrix $G_D$.  This gives us a changemaker $\sigma$ with $\langle\sigma,\sigma\rangle = -p$, so that
\[ p = |\langle \sigma,\sigma\rangle| \leq \tfrac{1}{3}(4^{n+1}-1) \]
by Lemma~\ref{lem:changemaker-bound}.  Thus $4^v \geq 3p+1$, or $v \geq \log_4(3p+1)$ as claimed.

If instead $\Sigma(L)$ arises as $-p$-surgery, then we repeat the same argument with $B_D$, whose Goeritz matrix describes the intersection form on the sharp manifold $X_{\mirror{L}}$ bounded by $\Sigma(\mirror{L})=-\Sigma(L)$, in place of $W_D$ to get the desired bound.
\end{proof}

\begin{proposition} \label{prop:22-negative-integer}
Let $Y = Y(T_{2,2a+1},T_{2,2b+1})$, where $a$ and $b$ are positive integers and $a \leq b$, and let
\[ n = |H_1(Y;\Z)| = 4(2a+1)(2b+1)-1. \]
If $Y$ is $-n$-surgery on some knot in $S^3$, then
\[ (a,b) \in \{ (1,1),(1,2),(1,3),(2,3),(3,3) \}. \]
\end{proposition}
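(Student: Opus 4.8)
The plan is to realize $Y$ as the branched double cover $\Sigma(L)$ of the alternating knot $L=L(T_{2,2a+1},T_{2,2b+1})$ from Figure~\ref{fig:LTpqTrs}, and then to apply Greene's changemaker obstruction (Theorem~\ref{thm:changemaker}) to rule out $-n$-surgery for all but the five listed pairs. In the notation of that figure the two torus knots correspond to the continued fractions $\frac{2a+1}{2}=a+\frac12$ and $\frac{2b+1}{2}=b+\frac12$, so $(a_0,a_1,b_0,b_1)=(a,2,b,2)$; in particular $a_1=b_1=2$, and hence the black graph $B_D$ has exactly $a_1+b_1+2=6$ vertices. If $Y$ is $-n$-surgery on a knot $K$, then $-Y \cong S^3_n(\mirror K)$ is a positive L-space surgery, and since $-Y=\Sigma(\mirror L)$ bounds the sharp $4$-manifold $X_{\mirror L}$ whose intersection form is the $5\times5$ black Goeritz matrix $M=M(a,b)$, Theorem~\ref{thm:changemaker} yields a changemaker $\sigma\in\Z^6$ with $\langle\sigma,\sigma\rangle=-n$ and a full-rank embedding $M\oplus\Lambda_{-n}\hookrightarrow-\Z^6$; equivalently, $M$ embeds as a finite-index sublattice of the orthogonal complement $\sigma^\perp\subset-\Z^6$.

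Next I would record $M$ explicitly. Reading the black graph off of Figure~\ref{fig:LTpqTrs} gives a negative definite symmetric integer matrix $M(a,b)$ of rank $5$, with entries that are affine-linear in $a$ and $b$ and with $|\det M(a,b)|=n=4(2a+1)(2b+1)-1$. Before analyzing embeddings at all, Proposition~\ref{prop:goeritz-bound} applied to the $6$-vertex black graph already forces
\[ n \le \tfrac13\!\left(4^{6}-1\right)=1365, \]
whence $(2a+1)(2b+1)\le 341$. Together with $1\le a\le b$ this leaves only finitely many candidate pairs (with $a\le 8$, and for each such $a$ a bounded range of $b$).

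It then remains to decide, for each of these finitely many $(a,b)$, whether there is a changemaker $\sigma=(\sigma_0,\dots,\sigma_5)$ of norm $n$ whose complement $\sigma^\perp$ contains a full-rank copy of $M(a,b)$. I would cut the search down using standard structural facts about changemaker lattices---the bounds $\sigma_i\le 2^i$ from the proof of Lemma~\ref{lem:changemaker-bound}, the recursive coin inequality of Definition~\ref{def:changemaker}, and Greene's description of $\sigma^\perp$ via a standard basis, whose small-norm vectors must match the low-degree vertices of $B_D$ (the small diagonal entries of $M$)---and then finish the remaining possibilities by a short computer enumeration. The expected outcome is that a valid embedding exists precisely for $(a,b)\in\{(1,1),(1,2),(1,3),(2,3),(3,3)\}$ and for no other pair in range, which is exactly the claimed conclusion. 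The L-space and sharpness hypotheses needed for Theorem~\ref{thm:changemaker} already hold by the discussion preceding Definition~\ref{def:changemaker}, so the main obstacle is the lattice-embedding analysis itself: extracting $M(a,b)$ correctly from the diagram and, above all, organizing the enumeration so that it is provably exhaustive over all norm-$n$ changemakers and all full-rank embeddings of $M$ into their complements, so that the five surviving cases really are the only ones that the obstruction fails to exclude.
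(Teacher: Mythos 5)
Your proposal is correct and follows essentially the same route as the paper: the same alternating diagram with $(a_0,a_1,b_0,b_1)=(a,2,b,2)$, the same application of Proposition~\ref{prop:goeritz-bound} to the six-vertex black graph to get $n\le 1365$, and the same finite computer search for changemaker-complement embeddings of the $5\times 5$ black Goeritz matrix. The only cosmetic difference is that the paper records the Goeritz matrix explicitly and only asserts non-embeddability outside the five exceptional pairs (rather than claiming embeddings exist for them, which is not needed for the conclusion).
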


\begin{proof}
Supposing that $Y$ is $-n$-surgery on a knot, we obtain an alternating diagram $D$ of $L(T_{2a+1,2},T_{2b+1,2})$ from Figure~\ref{fig:LTpqTrs} by setting $(a_0,a_1,b_0,b_1)=(a,2,b,2)$.  In this case, the six numbered regions in Figure~\ref{fig:LTpqTrs} are all of the vertices of the black graph $B_D$, so Proposition~\ref{prop:goeritz-bound} tells us that
\[ \log_4(3n+1) \leq 6 \quad\Longleftrightarrow\quad n \leq 1365, \]
or $(2a+1)(2b+1) \leq \frac{1366}{4} = 341.5$.

For the remaining cases, we compute that the black graph for the given diagram of $L(T_{2a+1,2},T_{2b+1,2})$ has Goeritz matrix
\[ \begin{pmatrix}
-3 & 1 & 0 & 1 & 0 \\ 
1 & -3 & 1 & 0 & 0 \\ 
0 & 1 & -b-1 & b & 0 \\ 
1 & 0 & b & -b-2 & 1 \\ 
0 & 0 & 0 & 1 & -a-1 
\end{pmatrix}. \]
A computer search among the cases where $1 \leq a \leq b$ and $(2a+1)(2b+1) \leq 341$ shows that the corresponding lattice does not embed in the orthogonal complement of a changemaker unless
\[ (a,b) \in \{ (1,1),(1,2),(1,3),(2,3),(3,3) \}, \]
and hence $Y=Y(T_{2a+1,2},T_{2b+1,2})$ does not arise from a $-n$-surgery except possibly in these five cases.
\end{proof}

This completes the proof of Theorem~\ref{thm:2-2a+1-2-2b+1}. \hfill\qed

\bibliographystyle{myalpha}
\bibliography{References}

\end{document}